\numberwithin{equation}{section}
\theoremstyle{plain}
\newtheorem{thm}{Theorem}[section]
\newtheorem{lem}{Lemma}[section]
\newtheorem{prop}{Proposition}[section]
\newtheorem{cor}{Corollary}[section]
\newtheorem{defs}{Definition}[section]
\theoremstyle{definition}
\newtheorem{rmk}{Remark}[section]
\NewDocumentCommand{\mybar}{ O{0.60} O{3pt} m }{%  <---- Set the default values here
    \mathrlap{\hspace{#2}\overline{\scalebox{#1}[1]{\phantom{\ensuremath{#3}}}}}\ensuremath{#3}
}
\newcommand{\rd}{\mathbb{R}^d}
\newcommand{\rone}{\mathbb{R}}
\newcommand{\intsol}{\int_{-\infty}^{\infty}}
\newcommand{\cv}{\mathcal{V}}
\newcommand{\cd}{\mathcal{D}}
\newcommand{\cw}{\mathcal{W}}
\newcommand{\ca}{\mathcal{A}}
\newcommand{\cb}{\mathcal{B}}
\newcommand{\cq}{\mathcal{Q}}
\newcommand{\cf}{\mathcal{F}}
\newcommand{\cp}{\mathcal{P}}
\begin{document}

\title{Bypassing H\"older super-critcality barriers in viscous, incompressible fluids}
\date{\today\\ \vspace{0.3in} \small{\emph{Dedicated to Daoud (David) Araki on the occasion of his 80th birthday: a father, friend, and teacher.}}}
\author{Hussain Ibdah\footnote{University of Maryland, College Park, MD 20742, USA (hibdah@umd.edu)}}
{\let\newpage\relax\maketitle}
\abstract{This is the second in a series of papers where we analyze the incompressible Navier-Stokes equations in H\"older spaces. We obtain, to our knowledge, the very first genuinely super-critical regularity criterion for this system of equations in any dimension $d\geq3$ and in the absence of physical boundaries. For \emph{any} $\beta\in(0,1)$, we show that $L_t^1C_x^{0,\beta}$ solutions emanating from smooth initial data do not develop any singularities. The novelty stems from obtaining new bounds on the fundamental solution associated with a one-dimensional drift-diffusion equation in the presence of destabilizing singular lower order terms. Such a bound relies heavily on the symmetry and pointwise structure of the problem, where the drift term is shown to ``enhance'' the parabolic nature of the equation, allowing us to break the criticality barrier. Coupled with a subtle regularity estimate for the pressure courtesy of Silvestre, we are able to treat the (incompressible) Navier-Stokes equation as a perturbation of the classical drift-diffusion problem. This is achieved by propagating moduli of continuity as was done in our previous work, based on the elegant ideas introduced by Kiselev, Nazarov, Volberg and Shterenberg.\\

\noindent\textbf{2010 MSC:} 35Q30, 76D03, 35B65, 35B50\\
\textbf{Keywords:} Incompressible Navier-Stokes; regularity; maximum principle
\section{Introduction and main results}
In 1958, John Nash published one of his cornerstone results \cite{Nash1958}: he proved that (bounded) solutions to 
\begin{equation}\label{paraeqnash}
	\partial_t u-\nabla\cdot[C\nabla u]=0,
\end{equation}
where $C$ is a symmetric matrix, gain H\"older continuity in space and time provided that there exists some $\lambda,\Lambda>0$ for which $\lambda |z|^2\leq Cz\cdot z\leq\Lambda |z|^2$, a condition commonly referred to as uniform ellipticity. The remarkable fact is that the a-priori bound obtained \emph{is independent of any continuity assumption on $C$}. In particular, the drift term is essentially a distribution. To be precise, he showed that there exists two constants, $\alpha\in(0,1)$ and $A>0$, \emph{depending only on the dimension $d$ and the uniform ellipticity constants $\lambda,\Lambda$}, such that 
\begin{equation}\label{Nashest}
	|u(t,x)-u(t_0,y)|\leq A\|u_0\|_{L^{\infty}}\left[\frac{|x-y|^{\alpha}}{t_0^{\alpha/2}}+\frac{(t-t_0)^{\frac{\alpha}{2(\alpha+1)}}}{t_0^{\frac{\alpha}{2(\alpha+1)}}}\right],
\end{equation}
holds true for every $t\geq t_0>0$ and every $x,y$. At the same time (and independently) De Giorgi \cite{DeGiorgi1957} proved an analogous result for the elliptic system, which was followed by a different proof due to Moser \cite{Moser1960} a couple of years later. The resulting technique/theorem is commonly referred to as the De Giorgi-Nash-Moser iteration, and remains to this day one of the fundamental tools used in analyzing parabolic/elliptic PDEs. 

A natural question to ask is this: can we obtain an analogous result for a parabolic equation that isn't in divergence form? Let us look at this from the perspective of a simple drift-diffusion equation:
\begin{equation}\label{ddintro}
	\partial_tu-\nu\Delta u+b\cdot\nabla u=0,
\end{equation}
 where $b$ is given, and say $u(0,x)=u_0(x)$ is bounded. Is it possible to prove an estimate analogous to \eqref{Nashest}? That is, can we show a smoothing effect, or can we at least guarantee that the solution will not ``lose'' some degree of regularity in finite time, while assuming the drift velocity is merely measurable? The answer is no, and there have been a number of counter-examples. We focus on the (2-dimensional) counter-example presented in \cite{SVZ2013} as it is the most relevant to our discussion. The authors show that even for $C^2$ initial data, one can always construct a divergence-free drift velocity such that the solution will break \emph{any} modulus of continuity in finite time. In particular, it simply is not possible, in general, to extend Nash's result to parabolic equations that aren't in divergence form. 

So the question now should be modified to read: how much regularity do we need to impose on the drift velocity in order to make sure that the solution to \eqref{ddintro} either experiences a smoothing effect, or at least propagates whatever regularity it started with? Let us start by noting that \eqref{ddintro} has a natural scale invariance: for $\epsilon>0$, $u_{\epsilon}(t,x):=\epsilon u(\epsilon^2t,\epsilon x)$ solves \eqref{ddintro} with drift $b_\epsilon(t,x):=\epsilon b(\epsilon^2t,\epsilon x)$ provided $u$ solves \eqref{ddintro} with drift $b$ (and rescaled initial data). The general ``folklore''\footnote{We say ``folklore'' because, to our knowledge, there dose not exist a concrete theorem that rigorously justifies this argument for any space.} is that if $b$ lies in some normed space $X$ such that $\|b_{\epsilon}\|_X=\epsilon ^{r}\|b\|_X$, for some $r\geq0$, then one would ``expect'' the solution $u$ to be unique, to propagate the regularity it started with, and/or to experience smoothing effects. The same idea applies to nonlinear PDEs: in general, one would expect good local well-posedness theory in critical ($r=0$) or subcritical ($r>0$) spaces. In particular, it is perfectly reasonable to conjecture that the solution will not lose regularity provided a critical or sub-critical norm is under control. This has been demonstrated throughout the years for various PDEs and in various spaces; even if we specialize to drift-diffusion type models, the list is far too long to include here. Nevertheless, to the best of our knowledge, all results regarding the drift-diffusion equation \eqref{ddintro} fall into one of two categories: (i) assume $b$ lies in some critical or sub-critical space, then $u$ is unique and has some degree of regularity, or (ii) there exists some drift $b$ in a super-critical space for which the solution loses some degree of regularity in finite time and/or is not unique. 

On the other hand, we point out that there \emph{does not exist any rigorously justified result that says super-critical drift implies loss in regularity, or critical/sub-critical drift implies propagation of regularity}, at least we are not aware of any such results. On the contrary, one of the results in \cite{SVZ2013} says that if we specialize to two dimensions and assume the drift is independent of time and is in $L^1_{loc}$, then the solution obeys a logarithmic modulus of continuity. Similarly, the aforementioned result of Nash does not require the coefficients to be in scale invariant spaces. In fact, in the simple one-dimensional setting, for instance, \eqref{paraeqnash} reads $\partial_tu-c\partial^2_xu-\partial_xc\partial_xu=0$, and bound \eqref{Nashest} requires zero regularity assumption to be made on the drift $\partial_xc$. It was also shown in our previous work \cite{Ibdah2022} that if we assume the existence of an arbitrary $\beta\in[0,1)$ such that $b\in L^{1/(1+\beta)}_tC_x^{0,\beta}$ with the H\"older semi-norm being non-decreasing as a function of time, then $\nabla u\in L^1_tL_x^{\infty}$ (see also the heuristics in \S\ref{heur1} below). Notice that the $L_t^1\dot{W}^{1,\infty}$ (semi)-norm is at the critical level, while the drift velocity $b$ is assumed to be at the super-critical level (the space $L_t^pC_x^{0,\beta}$ is critical when $p=2/(1+\beta)$). As far as we can tell, the special example mentioned earlier in this paragraph discovered in \cite{SVZ2013} together with this previous result of ours is, at the time, the closest one could get to extending Nash's argument to an equation that is not in divergence form. 

The purpose of this work is to push in the direction of extending such a-priori bound to the incompressible Navier-Stokes system, which from a scaling point of view is indistinguishable from the drift-diffusion equation \eqref{ddintro}, once the pressure is rescaled appropriately. Indeed, this system of equations read as 
\begin{equation}\label{NSE}
\begin{cases}
	\partial_t u(t,x)-\nu\Delta u(t,x)+(u\cdot\nabla)u(t,x)+\nabla p(t,x)=F(t,x), \\
	\nabla\cdot u(t,x)=0, \\
	u(0,x)=u_0(x),
\end{cases}
\end{equation}
where $F$ is a given forcing term. Setting $F=0$, we see that if we rescale the pressure according to $p_{\epsilon}(t,x)=\epsilon^2p(\epsilon^2 t,\epsilon x)$ (while $u$ is rescaled as before), then we get the same scale-invariance as the drift-diffusion equation. The main idea is to track the evolution of moduli of continuity (adapt the elegant method introduced in \cite{KNS2008, KNV2007}). As was shown in our previous work \cite{Ibdah2022}, very loosely speaking one has the bound $|u(t,x)-u(t,y)|\leq\Omega(t,|x-y|)$ provided $\Omega$ is a non-negative, non-decreasing solution to 
\begin{equation}\label{modcondintro}
\partial_t\Omega(t,\xi)-4\nu\partial^2_\xi\Omega(t,\xi)- g(t)\xi^{\beta}\partial_\xi\Omega(t,\xi)\geq C_{d,\beta} g(t)\int_0^{\xi}\partial_\eta\Omega(t,\eta)\eta^{\beta-1}d\eta,\quad (t,\xi)\in(0,T)\times(0,\infty),
\end{equation}
where $[u(t,\cdot)]_{C_x^{0,\beta}}\leq g(t)$, $\Omega(t,0)=0$, $C_{d,\beta}\geq0$ is some constant depending on the dimension $d$ and $\beta$, and $|u_0(x)-u_0(y)|<\Omega(0,|x-y|)$ for $x\neq y$, see \S\ref{pfmainthm}, below. The fact that $u$ has modulus of continuity $\Omega$ means that we can control $\|\nabla u(t,\cdot)\|_{L^{\infty}}$ via $\partial_\xi\Omega(t,0)$, as well as control $\|u(t,\cdot)\|_{L^{\infty}}$ by $\|\Omega(t,\cdot)\|_{L^{\infty}}$ (pointwise in $t$). Such reduction relies heavily on the point-wise structure of \eqref{NSE}, as will be demonstrated in section \ref{secheurprel}. Although non-local, \eqref{modcondintro} is a much more simpler problem than \eqref{NSE}. It is one-dimensional, and possesses an extra degree of freedom: it is an inequality rather than an equality. This degree of freedom allows us to rigorously treat the incompressible Navier-Stokes equation as a perturbation of drift-diffusion, as we shall now demonstrate: utilizing the fact that $\partial_\xi\Omega\geq0$ means that our task reduces to obtaining bounds on solutions to
\begin{equation}\label{eqmodintro2}
	\partial_t\Omega(t,\xi)-4\nu\partial^2_\xi\Omega(t,\xi)-\mu_1 g(t)\xi^{\beta}\partial_\xi\Omega(t,\xi)= C_{d,\beta} g(t)\int_0^{\xi}\partial_\eta\Omega(t,\eta)\eta^{\beta-1}d\eta,
\end{equation}
with any $\mu_1\geq1$. When $\mu_1$ is large, the dynamics of the equation is dominated by the simple, one-dimensional, drift-diffusion equation: $\partial_t\Omega(t,\xi)-4\nu\partial^2_\xi\Omega(t,\xi)-\mu_1 g(t)\xi^{\beta}\partial_\xi\Omega(t,\xi)=0$. This is nothing but a sufficient condition for $\Omega$ to be a modulus of continuity for solutions to \eqref{ddintro}. That is, the non-local term in \eqref{modcondintro} corresponds to the pressure. Setting $\cv:=\partial_\xi\Omega$ and extending the drift (and initial data) in an odd fashion about $\xi=0$, we see that $\partial_t\cv-4\nu\partial^2_\xi\cv-\mu_1 g\partial_\xi[h_0\cv]=0$. The difficulty in controlling $\|\cv(t,\cdot)\|_{L^{\infty}}$ stems from the singular lower-order coefficient $h_0'$. Due to the symmetries involved, the drift term has a good sign and helps enhance the classical parabolic regularity, allowing us to bound $\cv\in L_t^1L_x^{\infty}$ in terms of the $L^1$ norm of $g$ only. The trick is to notice that the adjoint equation has no singular lower order term, a fact that we exploit to obtain bounds on the fundamental solution; such heuristics are explained in more details in section \ref{heur2}, below. Thus, for drift-diffusion, one can control the time-average of $\|\nabla u(t,\cdot)\|_{L^{\infty}}$ in terms of $\|g\|_{L^1(0,T)}$. When $C_{d,\beta}>0$ (which is the case for the NSE), no matter how large $\mu_1$ is chosen, the adjoint equation associated to the derivative of \eqref{eqmodintro2} will always have a singular order term: $\cv:=\partial_\xi\Omega$ solves $\partial_t\cv-4\nu\partial^2_\xi\cv-\mu_1 gh_0\partial_\xi\cv=(\mu_1+C_{d,\beta})gh_0'\cv$. This restricts us from having good control over $\|\nabla u(t,\cdot)\|_{L^{\infty}}$. However, the bounds on the case when $C_{d,\beta}=0$ allows us to gain good control over the expected value of the derivative of the associated stochastic flow map, which then translates to good bounds on $\|\cv\|_{L_t^qL_x^1}$ for \emph{any} $q\geq1$ simply by employing the classical Feynman-Kac formula after choosing $\mu_1$ large enough depending only on $\beta$, $q$, and the dimension $d$. More detailed heuristics are provided in section \ref{heur3}. This in turn allows us to bound $\|u\|_{L_t^qL_x^{\infty}}$, any $q\geq1$, in terms of the $L^1$ norm of $g$, from which regularity follows by the classical Ladyzhenskaja-Prodi-Serrin criteria by choosing $q>2$.

Before stating our result, let us mention that Silvestre and Vicol \cite{SV2012} previously studied both the drift-diffusion equation \eqref{ddintro} and the incompressible NSE \eqref{NSE} in H\"older, and more generally in Morrey, classes. What they were able to show is that if the drift-velocity (or the solution to NSE) lies in the \emph{critical} H\"older class $L^{2/(1+\beta)}_tC_x^{0,\beta}$ (or the respective critical Morrey class), then the solution propagates $C_x^{0,\alpha}$ regularity from the initial data $L^{\infty}$ in time, for any $\alpha\in(0,1)$ (but \emph{not} $\alpha=1$). We showed in our previous work \cite{Ibdah2022} that the solution to NSE propagates Lipschitz regularity from the initial data $L^1$ in time, provided it is in the same critical H\"older class $L^{2/(1+\beta)}_tC_x^{0,\beta}$ studied in \cite{SV2012}, thus providing a different regularity proof, and complementing the argument in \cite{SV2012} by obtaining estimates for the case when $\alpha=1$. As explained above, here we improve the regularity criterion down to $L_t^1C_x^{0,\beta}$, by controlling the $L_t^qL_x^{\infty}$ norm in terms of the $L_t^1C_x^{0,\beta}$ quantity rather than trying to control the (stronger) entity $\|\nabla u\|_{L_t^1L_x^{\infty}}$. The main result of this work is the following: 
\begin{thm}\label{mainthm}
Let $d\geq3$, let $T_*\in(0,\infty)$, suppose $F:[0,T_*)\times\rd\rightarrow\rd$ is smooth, divergence-free, and assume that $(u,p)$ is a solution to \eqref{NSE} with regularity 
\begin{equation}\label{regu}
	u\in C_t^1C_x^2((0,T_*)\times\rd)\cap C([0,T_*);W^{1,\infty}(\rd)).
\end{equation}
Assume further that the forcing term $F$ and the initial data $u_0$ are such that one of the following hypotheses is true:
	\begin{enumerate}[label=\Alph*)]
	\item $u(t,\cdot)$ is periodic in space with arbitrary period $L>0$ in every direction and has zero average or
	\item There exists an $r\in(1,\infty)$ such that $u(t,\cdot)\in L^r(\rd)\cap W^{2,\infty}(\rd)$, and it vanishes together with its gradient at infinity, i.e.
	\[
	\lim_{|x|\rightarrow\infty}|u(t,x)|+\lim_{|x|\rightarrow\infty}|\nabla u(t,x)|=0,\quad \forall t\in[0,T_*).
	\] 
	\end{enumerate}
Let $\beta\in(0,1)$ be arbitrary, and suppose there exists two continuous functions $g:[0,T_*)\rightarrow[0,\infty)$ and $f:[0,T_*)\rightarrow[0,\infty)$ such that, 
\begin{equation}\label{hypoth}
	\sup_{x\neq y}\frac{|u(t,x)-u(t,y)|}{|x-y|^{\beta}}\leq g(t),\quad \sup_{x\in\rd}|F(t,x)|\leq f(t),\quad \forall t\in[0,T_*).
\end{equation}
Then there exists a constant $C_{d}\geq 1$ depending only on the dimension $d$, such that for any $\epsilon\in(0,1]$, if we set $\mu_{d,\beta,\epsilon}:=C_d/[\beta(1-\beta)\epsilon]$,
\begin{align}
&\Gamma(t):=\int_0^tg(r)dr,\label{deffGamma}\\
&G(t):=1+\mu_{d,\beta,\epsilon}e^{\mu_{d,\beta,\epsilon}\Gamma(t)}\left[\nu^{\frac{\beta-1}{2}}\int_0^t(t-r)^{\frac{\beta-1}{2}}g(r)dr+\nu^{\frac{\beta-2}{2}}\mu_{d,\beta,\epsilon}\Gamma(t)\int_0^t(t-r)^{\frac{\beta-2}{2}}g(r)dr\right]\label{defG},
\end{align}
then the following bound holds true for every $t\in[0,T_*)$
\begin{align}
	\| u(t,\cdot)\|_{L^{\infty}}\leq \left[3\|u_0\|_{L^{\infty}}+\int_0^tf(s)ds\right][G(t)]^{\epsilon}\label{mainbd}
\end{align}
In particular, given any $q\in[1,\infty)$, if we set $\epsilon:=1/q$, then there exists a constant $M\geq 1$ depending only on $\nu, \|u_0\|_{L^{\infty}}$, $T_*$, $\beta,\ d,\ \|g\|_{L^1(0,T_*)}$, $q$, and $\|f\|_{L^1(0,T_*)}$ (which blows up as $T_*\rightarrow\infty$ but is finite for any $T_*<\infty$) such that 
\begin{equation}\label{mainbd2}
	\int_0^{T_*}\| u(t,\cdot)\|^q_{L^{\infty}}dt\leq M.
\end{equation}
\end{thm}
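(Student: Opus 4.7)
The plan is to adapt the Kiselev--Nazarov--Volberg method of propagating moduli of continuity, following the author's previous work \cite{Ibdah2022}, to reduce the problem to an estimate on a one-dimensional drift-diffusion equation, and then to extract the $L^\infty_x$ bound \eqref{mainbd} via a stochastic/Feynman--Kac analysis. First I would show that any sufficiently smooth, non-negative, non-decreasing-in-$\xi$ function $\Omega(t,\xi)$ with $\Omega(t,0)=0$ that strictly dominates the initial oscillation and satisfies the differential inequality \eqref{modcondintro} (modified to absorb the forcing $f$) propagates as a modulus of continuity for $u(t,\cdot)$, so that $|u(t,x)-u(t,y)| \leq \Omega(t,|x-y|)$. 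The non-local right-hand side of \eqref{modcondintro} encodes Silvestre's subtle pressure estimate. Under either hypothesis (A) or (B), $\|u(t,\cdot)\|_{L^\infty}$ is then controlled, up to $\|u_0\|_{L^\infty}$ and $\int_0^t f$, by $\|\Omega(t,\cdot)\|_{L^\infty}$, reducing the proof to bounding the latter for a well-chosen candidate $\Omega$.

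Exploiting the fact that \eqref{modcondintro} is only an inequality, I can work with the \emph{equality} \eqref{eqmodintro2} for a free parameter $\mu_1 \geq 1$. Setting $\cv := \partial_\xi \Omega$, extending coefficients oddly about $\xi=0$, and writing $h_0(\xi):=\xi^\beta$ on $\xi>0$ (odd-extended), $\cv$ satisfies on $\rone$ the one-dimensional equation
\[
\partial_t \cv - 4\nu \partial_\xi^2 \cv - \mu_1 g(t)\, h_0(\xi)\, \partial_\xi \cv = (\mu_1 + C_{d,\beta})\, g(t)\, h_0'(\xi)\, \cv,
\]
and $\|\Omega(t,\cdot)\|_{L^\infty}$ is controlled by $\|\cv(t,\cdot)\|_{L^1_\xi}$. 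The key analytic observation (cf.\ \S\ref{heur2} and \S\ref{heur3}) is that in the toy case $C_{d,\beta}=0$ the equation is in conservative (divergence) form, so its backward adjoint has no singular zeroth-order coefficient, and the associated characteristic SDE $dY = -\mu_1 g(s) h_0(Y)\,ds + \sqrt{8\nu}\,dB$ has an attractive, odd-symmetric drift. This symmetry/attractivity is expected to enhance parabolic smoothing and yield sharp control on the Jacobian of the associated stochastic flow map, which is the new ingredient the author claims to bring.

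For the genuine equation ($C_{d,\beta} > 0$) I would then apply the classical Feynman--Kac formula, absorbing the singular reaction into an exponential weight whose moments are controlled using the flow-Jacobian bound from the toy case. Choosing $\mu_1 := \mu_{d,\beta,\epsilon}$ as in the theorem should force these moments to be bounded by $[G(t)]^\epsilon$, giving \eqref{mainbd}. The second assertion \eqref{mainbd2} then follows by raising \eqref{mainbd} to the $q$-th power with $\epsilon = 1/q$ and integrating in $t$: since $\beta \in (0,1)$ the kernels $(t-r)^{(\beta-1)/2}$ and $(t-r)^{(\beta-2)/2}$ are locally integrable, so Young's convolution inequality bounds $G$ in $L^q((0,T_*))$ in terms of $\nu$, $\beta$, $d$, $q$, $T_*$, and $\|g\|_{L^1}$, which combines with $\|u_0\|_{L^\infty}+\|f\|_{L^1}$ to yield \eqref{mainbd2}. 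The hard part, and the genuine novelty, is the Feynman--Kac step: producing sharp exponential-moment bounds for a one-dimensional diffusion against the destabilizing, singular reaction $g(t) h_0'(\xi) \sim \beta g(t)|\xi|^{\beta-1}$ concentrated exactly where the drift $h_0$ vanishes, and extracting the precise $\mu_1$-dependence required to recover the exponent $\epsilon$ rather than a weaker power of $G$.
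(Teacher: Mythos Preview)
Your outline is correct and mirrors the paper's proof essentially step for step: propagate a time-dependent modulus via the KNV breakthrough argument using Silvestre's pressure estimate (Lemma~\ref{pressest}) to arrive at the inequality \eqref{modcondintro}, exploit the freedom $\mu_1\geq1$ to pass to the equality \eqref{eqmodintro2}, differentiate to get $\cv=\partial_\xi\Omega$, and bound $\|u(t,\cdot)\|_{L^\infty}\leq\|\Omega(t,\cdot)\|_{L^\infty}\leq\|\cv(t,\cdot)\|_{L^1}$ via Feynman--Kac. The one point worth sharpening is your description of the ``hard part'': you frame it as needing exponential-moment bounds, but the paper's crucial observation (which you partly anticipate) is purely algebraic --- since the reaction coefficient $\mu_2 g h_0'$ is exactly $\lambda$ times the derivative of the drift $\mu_1 g h_0$ (with $\lambda=\mu_2/\mu_1$), the Feynman--Kac weight satisfies the identity $\eta(t,\ca_{t,\xi})=[\partial_\xi\ca(t,\xi)]^\lambda=\cb^\lambda$, so no genuine exponential-moment estimate is ever performed. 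After the change of variables $\sigma=\ca_{t,\xi}$ in $\int|\cv|\,d\xi$, one is left with $\mathbb{E}[\cb^{\lambda-1}]$, and since $\mu_1=C_{d,\beta}/\epsilon$ forces $\lambda-1=\epsilon\in(0,1]$, H\"older on the probability space gives $\mathbb{E}[\cb^{\lambda-1}]\leq(\mathbb{E}[\cb])^{\lambda-1}$; the first moment $\bar{\cb}=\mathbb{E}[\cb]$ solves the toy ($C_{d,\beta}=0$) equation with constant data and is bounded by $G(0,t)$ via the adjoint/fundamental-solution argument you describe (Theorem~\ref{FSest}). One minor slip: after setting $\epsilon=1/q$ you need $G\in L^1(0,T_*)$, not $L^q$, since $[G(t)]^{\epsilon q}=G(t)$.
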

\begin{rmk}\label{rmkscalednorm}
 Bound \eqref{mainbd} is not sharp: one can avoid the exponential growth in $\|g\|_{L^1}$. Moreover, it is not dimensionless. The culprit behind such ``inhomogeneity'' is a technicality: it results from bounding the $L^{1}$ norm of the fundamental solution associated to the one-dimensional parabolic  equation in terms of the $L^{\infty}$ norm in a suboptimal way. By optimizing such control, we get a sharper, dimensionless bound (that is more cumbersome to deal with): for any $\epsilon>0$, if we set
 \[
 \tilde{G}(t):=\left[1+\frac{2C_d(1-\beta)^{\beta-1}}{\epsilon\beta}\int_0^tg(s)\left(\nu^{\frac{-1}{2}}(t-s)^{\frac{-1}{2}}+\frac{C_d\nu^{\frac{\beta-2}{2}}}{\epsilon\beta(1-\beta)}\int_s^t(t-r)^{\frac{\beta-2}{2}}g(r)dr\right)^{1-\beta}ds\right]^{\frac{1}{1-\beta}}
 \]
 and assume $F=0$ for simplicity, then $\|u(t,\cdot)\|_{L^{\infty}}\leq 3\|u_0\|_{L^{\infty}}\left[\tilde{G}(0,t)\right]^\epsilon$. Straightforward, yet slightly tedious, applications of Minkowski's integral inequality and Fubini-Tonelli yield that $\tilde{G}$ is bounded in $L^1$ in terms of $\|g\|_{L^1}$, as is the case for $G$. Apart from avoiding exponential growth,  $\tilde{G}$ is dimensionless: in the periodic setting, $g$ being a H\"older semi-norm, has units $L^{1-\beta} T^{-1}$, kinematic viscosity $\nu$ has units $L^2T^{-1}$, while $C_d$ is an absolute constant. This makes the units of the quantity in parenthesis under the integral $L^{-1}$, and as it is raised to the power of $1-\beta$ before being multiplied by $gds$ (which has units $L^{1-\beta}$), $\tilde{G}$ has no physical units. Consequently, it scales appropriately (with respect to the natural scaling of \eqref{NSE}): notice that $\tilde{G}=\tilde{G}(t,[u(t,\cdot)]_{C_x^{0,\beta}})$, so that a change in variables yields $\tilde{G}(\lambda^2t,[u(t,\cdot)]_{C_x^{0,\beta}})=\tilde{G}(t,[u_\lambda(t,\cdot)]_{C_x^{0,\beta}})$. Here, $\lambda>0$ and $u_\lambda(t,x):=\lambda u(\lambda^2t,\lambda x)$, from which $[u_\lambda(t,\cdot)]_{C_x^{0,\beta}}=\lambda^{1+\beta}[u(\lambda^2t,\cdot)]_{C_x^{0,\beta}}$. Moreover, no matter how small $\epsilon$ is chosen, the $L^1$ norm of $g$ will always show up regardless whether we use $G$ or $\tilde{G}$ to bound $\|u\|_{L^q_tL_x^{\infty}}$.
 \end{rmk}
 \begin{rmk}
If one would like to get uniform in time bounds on $\| u(t,\cdot)\|_{L^{\infty}}$ by means of estimate \eqref{mainbd}, then one has to make a sub-critical assumption on the solution $u$. As is well known, to rule out the formation of a singularity at $T_*$, one only needs to control $\| u(t,\cdot)\|_{L^{\infty}}$ $L^q$ in time ($q>2$), so that it is enough to make a super-critical assumption. For the sake of convenience and completeness, we give a brief proof of this fact in Appendix \ref{applps} below. 
\end{rmk}
\begin{rmk}
We also show that one can control certain subcritical H\"older ($L_t^qC_x^{0,\gamma}$) semi-norms in terms of the super-critical $L_t^1C_x^{0,\beta}$ norm, see Appendix \ref{applpbds}.	 Unfortunately, the approach presented herein restricts $\gamma\in(0,1/2)$; although one can control $\|\nabla u(t,\cdot)\|_{L^{\infty}}$ uniformly in time by bootstrapping \eqref{mainbd2}, it is an open question whether one is able to replicate the regularity $\nabla u\in L_t^1L_x^{\infty}$ available for drift-diffusion directly (without bootstrapping). We think the bound $\|\nabla u(t,\cdot)\|_{L^{\infty}}\lesssim[G(t)]^{1+\epsilon}$ should hold true for any $\epsilon\in(0,1)$ (one could also replace $G$ by $\tilde{G}$ as defined in Remark \ref{rmkscalednorm}). We do not know how to prove this, see the heuristics in \S\ref{heur3}, below, for more details.
\end{rmk}
\begin{rmk}
	The reader may wonder why the bounds blow up as $\beta\rightarrow0^+$ or $\beta\rightarrow 1^{-}$. This is a consequence of using the \emph{optimal} (homogenous) pressure regularity estimates first observed by Silvestre in \cite{Silvestre2010unpub}: $p\in C_x^{0,2\beta}$ provided $u\in C_x^{0,\beta}$. Loosely speaking, the blowup happens due to the fact that Riesz transforms are unbounded on integer spaces: as $\beta\rightarrow0^+$, one expects the pressure to be $L^{\infty}$ (a bad space for the Riesz transform). Analogously, as $\beta\rightarrow 1^{-}$, one expects the pressure to be $C^2$, also a bad space for the Riesz transform. 
\end{rmk}

As a direct corollary of the above a-priori bound, we get the following super-critical regularity criterion: 
\begin{cor}\label{supercritreg}
Let $u$ be a smooth solution to \eqref{NSE} on $(0,T_*)\times\rd$ emanating from smooth initial data $u_0$, suppose it is either periodic in space or vanishes at infinity (satisfies one of the conditions A) or B) from Theorem \ref{mainthm}), and assume $F=0$. If there exists a $\beta\in(0,1)$ such that 
\[
\int_0^{T_*}[u(t,\cdot)]_{C_x^{0,\beta}}dt<\infty,
\]
then there exists a $\delta>0$ such that $u$ is smooth on $(0,T_*+\delta]$. Equivalently, let $T_*$ be the maximal time of existence for the unique solution $u$ emanating from such initial data. Then $T_*<\infty$ if and only if for any $\beta\in(0,1)$, 
\[
	\int_0^{T_*}[u(t,\cdot)]_{C_x^{0,\beta}}dt=+\infty.
\]
\end{cor}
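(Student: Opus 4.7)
The plan is to derive the corollary as a two-line application of Theorem \ref{mainthm} combined with the classical Ladyzhenskaja-Prodi-Serrin (LPS) regularity criterion, since the hypothesis $\int_0^{T_*}[u(t,\cdot)]_{C_x^{0,\beta}}dt<\infty$ provides exactly the integrability of $g(t):=[u(t,\cdot)]_{C_x^{0,\beta}}$ that Theorem \ref{mainthm} needs as input. The only real work is checking that the hypotheses of the theorem are met and choosing parameters carefully.

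First, I would verify that the smooth solution $u$ satisfies the regularity condition \eqref{regu} on every compact sub-interval of $[0,T_*)$ and that either the periodicity hypothesis A) or the decay hypothesis B) holds (directly inherited from the assumptions of the corollary). Setting $f \equiv 0$ (since $F=0$) and $g(t):=[u(t,\cdot)]_{C_x^{0,\beta}}$, which is continuous on $[0,T_*)$ by the assumed smoothness, the hypothesis \eqref{hypoth} of Theorem \ref{mainthm} is automatic. The standing assumption of the corollary then reads $\|g\|_{L^1(0,T_*)}<\infty$.

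Next, I would pick any exponent $q>2$ — say $q=3$ — and invoke the bound \eqref{mainbd2} with $\epsilon:=1/q$. This yields a finite constant $M=M(\nu,\|u_0\|_{L^\infty},T_*,\beta,d,\|g\|_{L^1(0,T_*)},q)$ such that
\begin{equation*}
	\int_0^{T_*}\|u(t,\cdot)\|_{L^\infty}^{q}\,dt \le M<\infty.
\end{equation*}
Thus $u\in L^q_t L^\infty_x\bigl((0,T_*)\times\rd\bigr)$ with $q>2$, which is a classical sub-critical Ladyzhenskaja-Prodi-Serrin class. At this point I would appeal to the LPS criterion (whose proof, as the author indicates, is recorded in Appendix \ref{applps}): such control is sufficient to rule out blow-up at $T_*$ and guarantees that the smooth solution extends past $T_*$, giving some $\delta>0$ with $u\in C^\infty((0,T_*+\delta]\times\rd)$. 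The equivalence of the two formulations of the corollary then follows by contrapositive: if $T_*<\infty$ is the maximal time of existence then $\int_0^{T_*}[u]_{C_x^{0,\beta}}dt=+\infty$ for every $\beta\in(0,1)$, since otherwise the above argument would extend the solution and contradict maximality.

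There is no substantive obstacle here — this is truly a \emph{direct} corollary. The only minor point to be careful about is that Theorem \ref{mainthm} produces a constant $M$ that a priori depends on $T_*$ and may blow up as $T_*\to\infty$; this is harmless, however, since we are arguing at a finite putative blow-up time, and one only needs $M$ to be finite for the fixed $T_*$ under consideration. All of the heavy lifting has already been done inside Theorem \ref{mainthm}.
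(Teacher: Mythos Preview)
Your proposal is correct and follows essentially the same approach as the paper's own proof: apply Theorem \ref{mainthm} with $g(t):=[u(t,\cdot)]_{C_x^{0,\beta}}$ and $F=0$ to obtain $u\in L_t^qL_x^{\infty}$ for some $q>2$, then invoke the Ladyzhenskaja--Prodi--Serrin criterion (Appendix \ref{applps}) to preclude blow-up at $T_*$, with the second statement following by contradiction.
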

\begin{rmk}
We emphasize: \emph{this is a statement about classical (strong) solutions, and not Leray-Hopf weak solutions}. We do \emph{not} say that Leray-Hopf solution that are assumed to be $L_t^1C_x^{0,\beta}$ are regular (or unique): that remains open. To our knowledge, the state of the art regularity statement regarding (suitable) Leray-Hopf solutions remains to this day the celebrated Caffarelli-Kohn-Nirenberg result \cite{CKN1982}. What we are saying here is that given any solution that emanates from smooth initial data (of arbitrary size), if there exists a $\beta\in(0,1)$ such that $u\in L^1_tC_x^{0,\beta}((0,T_*)\times\rd)$, then $u$ cannot develop a singularity at time $T_*$.
\end{rmk}
%\begin{rmk}
%Notice that we normalized the viscosity coefficient in \eqref{NSE} to one. This can be done without any loss in generality, since we impose no restrictions on the size of the initial data (or the period, in case we are working in periodic setting).
%\end{rmk}
\begin{proof}[Proof of Corollary \ref{supercritreg}]
Apply Theorem \ref{mainthm} with $g(t):=[u(t,\cdot)]_{C_x^{0,\beta}}$, $t\in[0,T_*)$, and $F=0$. It is well known that $u$ will not develop a singularity at $T_*$ provided $\| u(t,\cdot)\|_{L^{\infty}}$ is $L^q(0,T_*)$, $q>2$ (see Appendix \ref{applps} below for a simple proof of this fact), from which the claim follows. The second statement follows analogously using a straightforward proof by contradiction and an application of Theorem \ref{mainthm}.
\end{proof}
As the experienced reader may know, all known a-priori bounds for \eqref{NSE} are at the supercritical level (e.g.: $L_t^{\infty}L_x^2,\ L_t^2\dot{H}_x^1,\ L_t^1L_x^{\infty}$ and $L_t^{\zeta_m}\dot{H}_x^m$ where $\zeta_m=2/(2m-1)$ \cite{Constantin2001, Constantin2014, Duff1990, FGT1981, Vasseur2010}), while all previously known regularity criteria are (at best) at the critical level (e.g.: \cite{ESS2003, Ibdah2022, Ladyzhenskaya1967,Prodi1959,Serrin1963, SV2012}), save a few ``slightly'' super-critical criteria where one goes below the critical level by a logarithmic factor. See the introduction of \cite{Ibdah2022} for a brief survey. This is usually referred to as the ``scaling'' gap, and is one of the reasons why the global well-posedness question is very hard to address. Corollary \ref{supercritreg}, above, provides, to our knowledge, the very first \emph{genuinely} super-critical regularity criterion. We do not know whether the a-priori bound assumed in Theorem \ref{mainthm} is satisfied by solutions to \eqref{NSE}, and so \emph{this is not a solution to the global regularity problem}. We do, however, remind the reader that the $L_t^1L_x^{\infty}$ norm of Leray-Hopf solutions to \eqref{NSE} is under control (in 3 dimensions), as was first shown in \cite{FGT1981} and later on in \cite{Constantin2001, Constantin2014, Tao2013}. In particular, any upgrade of such an a-priori bound to $L_t^1C_x^{0,\beta}$, no matter how small $\beta$ is, would rule out the formation of a singularity in finite time. It may be worthwhile noting that as a consequence of the a-priori bounds proven in \cite{FGT1981}, one also has, for instance, $u\in L_t^{2/3}C_x^{0,1/2}$, see \S\ref{heur1} below for more details. We also want to point out that it is highly unlikely one can obtain a similar regularity criteria for the averaged Navier-Stokes model introduced by Tao \cite{Tao2016}, or any of the related shell-models. This is because we \emph{heavily rely on and exploit the point-wise structure} of the advective non-linearity and incompressibility constraint, in particular, we do not rely on the energy inequality, nor do we employ any energy estimates. The argument presented herein is in some sense a maximum principle type of result (we are \emph{directly} measuring the evolution of $\| u(t,\cdot)\|_{L^{\infty}}$, and so the point-wise structure is crucial, as will be seen later).

At the heart of the proof of Theorem \ref{mainthm} is a rather unexpected $L_t^1L_x^{\infty}$ a-priori bound to solutions of a one-dimensional parabolic equation with a singular lower order term. Such a-priori bound is, as far as we can tell, far from being obvious and is highly non-trivial. It implies an $L_t^1\dot{W}_x^{1,\infty}$ a-priori bound on solutions to a one-dimensional drift-diffusion equation in terms of supercritical quantities of the drift. The inspiration is drawn in part from the result of Nash \cite{Nash1958}, in the sense that it is based on a-priori bounds on the fundamental solution in terms of supercritical quantities. Of course, as explained earlier, it is impossible to extend Nash's result to an abstract parabolic equation without making any regularity assumptions on the coefficients, and we do not do so here: to our knowledge, the following Theorem supersedes any of the available results so far. We state it in an informal manner here, the rigorous statement is Lemma \ref{thmbuildmod}, below.
\begin{thm}\label{secondthm}
Let $\beta\in(0,1)$, $T>0$, and let $h(\xi)$ be the odd extension about $\xi=0$	 of $\xi^{\beta}$. Let $\mu_1\geq0$ and $g:[0,T]\rightarrow[0,\infty)$ be given, and let $\cv:[0,T]\times\rone$ solve
\begin{equation}\label{scndthmeq}
	\partial_t\cv(t,\xi)-\nu\partial^2_\xi\cv(t,\xi)-\mu_1g(t)h(\xi)\partial_\xi\cv(t,\xi)-\mu_1g(t)h'(\xi)\cv(t,\xi)=0,\quad (t,\xi)\in(0,T]\times\rone,
\end{equation}
with initial data $\cv_0$. If the initial data $\cv_0$ happens to be
\begin{itemize}[label=\textbf{(S)}]
	\item non-negative, even about $\xi=0$, non-increasing on $[0,\infty)$ (and hence non-decreasing on $(-\infty,0]$) so that it is maximized at $\xi=0$,
\end{itemize}
then so is the solution $\cv(t,\cdot)$ for every $t\in[0,T]$, and we have the bound for every $t\in[0,T]$.
\[
\|\cv(t,\cdot)\|_{L^{\infty}}\leq\|\cv_0\|_{L^{\infty}}\left[1+8\mu_1e^{\mu_1\|g\|_{L^1}}\left[\nu^{\frac{\beta-1}{2}}\int_0^t(t-r)^{\frac{\beta-1}{2}}g(r)dr+\nu^{\frac{\beta-2}{2}}\mu_1\|g\|_{L^1}\int_0^t(t-r)^{\frac{\beta-2}{2}}g(r)dr\right]\right].
\]
In particular, $\cv\in L_t^1L_x^{\infty}$ provided $g\in L^1(0,T)$ and $\beta\in(0,1)$.
\end{thm}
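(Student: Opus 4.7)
The plan is to (i) exploit the symmetries of the equation to propagate property (S), (ii) reduce the $L^{\infty}$ bound to controlling $\cv(t,0)$ via a Duhamel representation and Gaussian moment estimates, and (iii) close the resulting integral inequality using a singular Gronwall-type iteration.

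For step (i), the operator in \eqref{scndthmeq} commutes with the reflection $\xi \mapsto -\xi$ because $h$ is odd and $h'$ is even, so the evenness of $\cv$ is preserved. Non-negativity follows by rewriting \eqref{scndthmeq} in the conservative Fokker--Planck form $\partial_t \cv = \nu \partial_\xi^2 \cv + \mu_1 g\, \partial_\xi(h\cv)$ and appealing to its stochastic representation (equivalently, the maximum principle). For the monotonicity of $\cv(t,\cdot)$ on $[0,\infty)$, I would differentiate the equation in $\xi$: the function $q := \partial_\xi \cv$ then satisfies a drift-diffusion equation with source $\mu_1 g\, h''\cv$, and since $h''(\xi)<0$ for $\xi>0$ and $\cv \geq 0$ this source is non-positive on $(0,\infty)$; combined with the boundary condition $q(t,0)=0$ forced by evenness and the initial sign $q(0,\cdot)\leq 0$ on $[0,\infty)$, a standard parabolic comparison gives $q \leq 0$ on $[0,\infty)$. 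The singular coefficient $h'(\xi) \propto |\xi|^{\beta-1}$ at the origin is accommodated by first regularizing $h$ away from $0$ and passing to the limit in a stability estimate.

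For step (ii), property (S) gives $M(t) := \|\cv(t,\cdot)\|_{L^\infty} = \cv(t,0)$, so the conservative Duhamel formula evaluated at the origin,
\[
\cv(t,0) = [e^{\nu t \partial_\xi^2}\cv_0](0) + \mu_1 \int_0^t g(s)\,[e^{\nu(t-s)\partial_\xi^2}\partial_\xi(h\cv(s))](0)\,ds,
\]
reduces to computing integrals against the Gaussian heat kernel $G_{\nu\tau}$. Transferring $\partial_\xi$ onto the kernel and using the oddness of $h\cv$ paired with the evenness of $G_{\nu\tau}$, the forcing integrand collapses to $\frac{|y|^{1+\beta}}{2\nu\tau}\,G_{\nu\tau}(y)\,\cv(s,y)$. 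Bounding $\cv(s,y)\leq M(s)$ and using the Gaussian moment $\int|y|^{1+\beta}G_{\nu\tau}(y)\,dy \sim (\nu\tau)^{(1+\beta)/2}$ yields
\[
M(t) \leq M(0) + C_\beta\, \mu_1\, \nu^{(\beta-1)/2} \int_0^t (t-s)^{(\beta-1)/2} g(s) M(s)\,ds,
\]
which accounts for the first kernel $(t-r)^{(\beta-1)/2}$ of the stated bound.

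For step (iii), I would iterate the inequality: one substitution produces a double integral with kernel $(t-s)^{(\beta-1)/2}(s-r)^{(\beta-1)/2}g(s)g(r)$, which after a midpoint splitting at $s=(t+r)/2$ can be controlled by a single convolution against the more singular kernel $(t-r)^{(\beta-2)/2}$ multiplied by $\mu_1\|g\|_{L^1}$; a classical Gronwall argument then absorbs the residual factor of $M$ and produces the exponential prefactor $e^{\mu_1 \|g\|_{L^1}}$, yielding both summands in the theorem. The main obstacle is this last step: the exponent $(\beta-1)/2 \in (-1/2,0)$ is genuinely singular at $s=t$, so standard Gronwall does not apply directly, and producing the precise two-term form with the correct combination of $\nu$, $\mu_1$, and $\|g\|_{L^1}$ factors requires careful bookkeeping of the iteration. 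A secondary subtlety is making the monotonicity argument in step (i) rigorous despite the singularity of $h'$ at the origin; a regularization-and-limit procedure should suffice.
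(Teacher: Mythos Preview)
Your steps (i) and (ii) are fine and mirror the paper. The gap is in step (iii): the self-referential inequality
\[
M(t)\le M(0)+C\mu_1\nu^{(\beta-1)/2}\int_0^t (t-s)^{(\beta-1)/2}g(s)M(s)\,ds
\]
cannot be closed using only $g\in L^1$. Any singular-Gronwall lemma with kernel $(t-s)^{-\alpha}$, $\alpha=(1-\beta)/2$, requires $g\in L^q$ for some $q>1/(1-\alpha)=2/(1+\beta)$, which is precisely the \emph{critical} integrability; the whole point of the theorem is to go strictly below it. Your ``midpoint splitting'' does not produce the claimed control: after one iteration the inner integral is $\int_r^t(t-s)^{(\beta-1)/2}(s-r)^{(\beta-1)/2}g(s)\,ds$, and there is no way to dominate this by $\|g\|_{L^1}(t-r)^{(\beta-2)/2}$ (the two sides do not even have the same scaling in $t-r$). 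Iterating further only compounds the problem; you never exit the Gronwall loop with a super-critical bound.

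The paper avoids this loop entirely by passing to the \emph{adjoint}. Writing $Z(t,\xi;s,\sigma)$ for the fundamental solution of $\partial_t-\nu\partial_\xi^2-\mu_1 g\,\partial_\xi(h\,\cdot)$, for fixed $(t,\xi)$ the function $Z$ solves backwards in $(s,\sigma)$ the equation $-\partial_s Z-\nu\partial_\sigma^2 Z+\mu_1 g\,h\,\partial_\sigma Z=0$, a pure transport--diffusion equation with \emph{no singular lower-order term}. Decomposing $Z=\Psi+\tilde Z$ (heat kernel plus correction), $\tilde Z$ satisfies the same adjoint equation with forcing $-\mu_1 g\,h\,\partial_\sigma\Psi$ and zero data at $s=t$. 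An $L^p\to L^\infty$ estimate on this equation (the $\|h'\|_{L^\infty}$ contribution from the transport term carries a factor $1/p$ and vanishes as $p\to\infty$) yields directly
\[
\|\tilde Z(t,0;s,\cdot)\|_{L^\infty}\;\le\;\mu_1\nu^{(\beta-2)/2}\int_s^t(t-r)^{(\beta-2)/2}g(r)\,dr,
\]
with no self-reference. This is where the kernel $(t-r)^{(\beta-2)/2}$ actually comes from. One then estimates $\|Z(t,0;s,\cdot)\|_{L^1}$: after integration by parts the only dangerous term is $\int h'(\sigma)|\tilde Z|\,d\sigma$, which is split into $|\sigma|\le1$ (bounded by $\|\tilde Z\|_{L^\infty}$, already known) and $|\sigma|\ge1$ (bounded by $\|\tilde Z\|_{L^1}$, since $h'\le 2\beta$ there). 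The resulting differential inequality for $\|\tilde Z(t,0;r,\cdot)\|_{L^1}$ is a \emph{non-singular} Gronwall with weight $\mu_1 g(r)$, giving the factor $e^{\mu_1\|g\|_{L^1}}$ and closing with $g\in L^1$ alone. Since $M(t)\le M(0)\|Z(t,0;0,\cdot)\|_{L^1}$, this yields the stated bound. The adjoint observation is the missing idea in your argument.
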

\begin{rmk}
	Notice that the assumption (S) on the initial data (and hence, the solution) makes $\cv(t,\cdot)$ achieves its maximum at $\xi=0$, so that the singularity in $h'$ at $\xi=0$ makes it very difficult to obtain $L^{\infty}$ bounds.
\end{rmk}
\begin{rmk}
Theorem \ref{secondthm} is false if one omits (or reverses the sign of) the drift term while keeping the singular lower order term in \eqref{scndthmeq}, see \S\ref{heur3} for more details, and Appendix \ref{appce} for a counter-example essentially due to Elgindi \cite{ElgindiPrivateComm}.
\end{rmk}

The connection between Theorem \ref{secondthm} and Theorem \ref{mainthm} is explained in section \ref{secheurprel} below, and is based on extending the elegant ideas introduced in \cite{KNS2008, KNV2007} to the incompressible NSE as was done in our previous work \cite{Ibdah2022}, taking advantage of a subtle continuity estimate for the pressure first discovered by Silvestre \cite{Silvestre2010unpub}. Very loosely speaking, Theorem \ref{secondthm} is used to construct $L_t^qL_x^{\infty}$ solutions to 
\[
	\partial_t\Omega(t,\xi)-4\nu\partial^2_\xi\Omega(t,\xi)- g(t)\xi^{\beta}\partial_\xi\Omega(t,\xi)\geq C_d g(t)\left[\int_0^{\xi}\Omega(t,\eta)\eta^{\beta-2}d\eta+\frac{\Omega(t,\xi)}{(1-\beta)\xi^{1-\beta}}\right],
\]
for any $q\in[1,\infty)$, under the assumption that $\beta\in(0,1)$ and $g\in L^1$: see discussion in \S\ref{secheuristic} and Theorem \ref{thmbuildmod} below. The solution to NSE \eqref{NSE} will be shown to satisfy $|u(t,x)-u(t,y)|\leq\Omega(t,|x-y|)$, from which the claimed a-priori bound follows once we bound $\Omega$. We describe the key ideas and observations made in \S\ref{secheuristic}, and list some preliminary results in \S\ref{secprel}. The heart of the matter is section \ref{mainsec} where we analyze solutions to the above inequality. Theorem \ref{mainthm} is then proven in section 4. We complement the analysis with some results in the Appendices.
\section{Heuristics and preliminaries}\label{secheurprel}
Let us start by making some basic definitions. We slightly modify the definition of a modulus of continuity utilized in our previous work \cite{Ibdah2022}, since we do not need the level of generality required there. This makes Lemmas \ref{pressest}, \ref{gradbd}, and \ref{localmaxprincp}, below, ``corollaries'' of Lemmas  4.1, 3.2, and 3.3 respectively in our previous work \cite{Ibdah2022}, and so their proofs will be omitted.
\begin{defs}\label{defmod}
 We say a function $\omega:[0,\infty)\rightarrow[0,\infty)$ is a modulus of continuity if it is continuous, non-deceasing, $\omega\in C^2(0,\infty)$, $\omega(0)=0$, and $\omega''(0^+)=-\infty$. We call $\omega$ Lipschitz if $\omega'(0)<\infty$.
\end{defs}
A typical example of a Lipschitz modulus of continuity is $\omega(\xi)=\xi/(1+\xi^{\alpha})$, for some $\alpha\in(0,1)$.
\begin{defs}\label{deftimdepmod}
Let $T>0$ be given. A function $\Omega\in C([0,T]\times[0,\infty))$ is said to be a time-dependent modulus of continuity on $[0,T]$ if $\Omega(t,\cdot)$ is a modulus of continuity for each $t\in[0,T]$ and $\Omega(\cdot,\xi)\in C^1(0,T]$ for each fixed $\xi\in[0,\infty)$.
\end{defs}
A typical example of a time-dependent modulus of continuity is $\Omega(t,\xi)=\tilde{\Omega}(t,\xi)+\delta\omega(\xi)$, where $\delta>0$, $\omega$ is a concave modulus of continuity, and $\tilde{\Omega}$ solves $\partial_t\tilde{\Omega}-\partial^2_\xi\tilde{\Omega}-g(t)\xi^{\beta}\partial_\xi\tilde{\Omega}=f(t,\xi)$ on $(0,T)\times(0,\infty)$ together with the boundary condition $\tilde{\Omega}(t,0)=0$, and some initial condition $\tilde{\Omega}(0,\xi)=\Omega_0(\xi)$. Here, $\beta\in(0,1]$, $g$ is non-negative, with $\Omega_0$ and $f(t,\cdot)$ being non-decreasing, smooth functions that vanishes at $\xi=0$.
\begin{defs}\label{defobeymod}
Let $\omega$ be a modulus of continuity and let $u:\rd\rightarrow\rd$ be a vector field. We say $u$ has modulus of continuity $\omega(\xi)$ if $|u(x)-u(y)|\leq\omega(|x-y|)$ for every $(x,y)\in\rd\times\rd$. We say $u$ strictly obeys $\omega$ if $|u(x)-u(y)|<\omega(|x-y|)$ whenever $x\neq y$.	
\end{defs}
\subsection{Heuristics}\label{secheuristic}
\subsubsection{Transferring the evolution to moduli of continuity}\label{heur1}
We now assume that $u$ is a smooth solution to the classical drift-diffusion equation \eqref{ddintro}, equipped with smooth initial data. Here $b$ is assumed to be smooth, and we focus on either the periodic or the whole space (with decay at infinity) setting. Let us assume that for some $\beta\in(0,1)$, there exists a smooth, non-negative $g:[0,T]\rightarrow[1,\infty)$ such that $|b(t,x)-b(t,y)|\leq g(t)|x-y|^{\beta}$ for every $x,y$. As was shown in our previous work \cite{Ibdah2022} (see also \S\ref{pfmainthm} below), one can adapt the elegant ideas introduced in \cite{KNS2008,KNV2007} to show that if $\Omega$ is a smooth time-dependent modulus of continuity (as in Definition \ref{deftimdepmod}) and solves $\partial_t\Omega-4\nu\partial^2_\xi\Omega-g(t)\xi^{\beta}\partial_\xi\Omega\geq0$ on $(0,T]\times(0,\infty)$, then $|u(t,x)-u(t,y)|\leq \Omega(t,|x-y|)$ for any $t\in[0,T]$ and any $x,y$. In particular, since $\Omega(t,0)=0$, we get $\|\nabla u(t,\cdot)\|_{L^{\infty}}< \partial_\xi\Omega(t,0)$ (provided $|u(0,x)-u(0,y)|<\Omega(0,|x-y|)$ for every $x\neq y$); see Lemma \ref{gradbd} below. Perhaps the easiest way to construct such an $\Omega$ is to make the ansatz $\Omega(t,\xi):=\omega(\mu\xi)$, where $\mu=\mu(t)$. A straightforward calculation tells us that if $\mu\approx g^{1/(1+\beta)}$, and if $g$ is non-decreasing, then as long as $4\nu\omega''(\sigma)+\sigma^{\beta}\omega'(\sigma)\leq 0$, the previous conditions are met, leading to the bound 
\begin{equation}\label{c200}
	\|\nabla u(t,\cdot)\|_{L^{\infty}}< C_0g^{\frac{1}{1+\beta}}(t),\quad \forall t\in[0,T],
\end{equation}
for some $C_0\geq 1$ depending on initial data and $\nu$. From here, we clearly see that if $g\in L^{1/(1+\beta)}(0,T)$, then $\|\nabla u(t,\cdot)\|_{L^{\infty}}\in L^1(0,T)$: a critical entity of the solution is guaranteed to be under control provided a super-critical quantity of the drift is finite. Here is an explicit example: setting $\nu=1$ for simplicity, one can check that 
\[
\omega(\sigma):=
\begin{cases}
	\sigma-\sigma^{3/2},& \sigma\in[0,\delta],\\
	\omega_{\mathcal{R}}(\sigma),& \sigma\geq\delta,
\end{cases}
\]
where 
\[
\omega_{\mathcal{R}}(\sigma)=\delta-\delta^{3/2}+\frac{1}{4}\int_\delta^\sigma \exp\left(\frac{-\eta^{\beta+1}}{4(\beta+1)}\right)d\eta,
\]
and $\delta\in(0,1/4)$ satisfies all the necessary requirements. One can rescale $\omega$ to take into account size of initial data (all this was made rigorous in \cite{Ibdah2022}). The importance of the previous bound (apart from providing us with ``super-critical regularity'') stems from the fact that if $u$ is a Leray-Hopf solutions to the incompressible Navier-Stokes system (in three dimensions), then the time average of $\|u(t,\cdot)\|_{\dot{H}^2}^{2/3}$ on $(0,T)$ does not blowup, i.e., $u\in L_t^{2/3}\dot{H}_x^2$, as was first proven in \cite{FGT1981} (periodic setting) followed by \cite{Duff1990} (bounded domains). Sobolev embedding then tells us that $u\in L_t^{2/3}C_x^{0,1/2}$. It follows that if we can prove a bound similar to \eqref{c200} for the solution to \eqref{NSE}, then we can pretty much rule out the formation of a singularity in finite time, save for the non-decreasing assumption we have on $g$. The difficulty, of course, lies in controlling the pressure. In order to track the evolution of a modulus of continuity by the system \eqref{NSE}, one needs to obtain continuity estimates on $\nabla p$ in terms of those known for $u$. It is not very obvious how this can be achieved: indeed the relationship between $u$ and $p$ is given by $-\Delta p=\text{div}[u\cdot\nabla u]$, or equivalently, $p=R_iR_j(u_i u_j)$, with $\{R_i\}_{i=1}^d$ being the standard Riesz transforms, and we omitted the double sum for convenience. Standard continuity estimates available for the Riesz transform as well as standard elliptic regularity tell us that we would need continuity estimates for $\nabla u$ in order to get a continuity estimate for $\nabla p$. Remarkably, one \emph{can} prove that $p\in C^{0,2\alpha}$ provided $u\in C^{0,\alpha}$. When $\alpha\in(1/2,1)$, this translates to a H\"older estimate on the gradient of the pressure: $\nabla p\in C^{0,2\alpha-1}$, \emph{without imposing a H\"older condition on $\nabla u$}. This was realized first by Silvestre in an unpublished work \cite{Silvestre2010unpub} and is based on utilizing the incompressibility constraint in a very clever and subtle way: for any $x\in\rd$, we have the identity
\[
\sum_{i,j}\partial_{z_i}\partial_{z_j}[u_i(z)u_j(z)]=\sum_{i,j}\partial_{z_i}\partial_{z_j}[(u_i(z)-u_i(x))(u_j(z)-u_j(x))],
\]
from which the result follows from the representation
\begin{equation}\label{reppress}
\nabla p(x)=\int_{\rd} \left[u_i(x-z)-u_i(x)\right][u_j(x-z)-u_j(x)]\partial_i\partial_j\nabla\phi(z)dz,
\end{equation}
where $\phi$ is the fundamental solution to the Laplace equation, $\phi(z):=C_d|z|^{2-d}$. This was extended to the periodic setting and to abstract moduli of continuity as well in \cite[Lemma 4.1]{Ibdah2022}, which we now state. 
\begin{lem}\label{pressest}
	Let $u$ and $b$ be continuous, divergence-free vector fields. Suppose further that either $u,b\in C_{per}(\rd)$ or $b\in L^{q}(\rd)$ and $u\in L^{\infty}(\rd)$ for some $q\in(1,\infty)$. Assume $u$ and $b$ have moduli of  continuity $\omega_{u}$ and $\omega_{b}$ respectively. If 
	\[
	p:=\sum_{i,j=1}^dR_iR_j(b_iu_j),
	\]
where $\{R_j\}_{j=1}^d$ are the Riesz transforms, then $p\in C^1(\rd)$ and $\nabla p$ has modulus of continuity
\begin{equation}\label{modgradp}
\widetilde\omega(\xi):=C_d\left[\int_0^\xi\frac{\omega_{b}(\eta)\omega_{u}(\eta)}{\eta^2}d\eta+\omega_{b}(\xi)\int_{\xi}^\infty\frac{\omega_{u}(\eta)}{\eta^2}\ d\eta+\omega_{u}(\xi)\int_{\xi}^\infty\frac{\omega_{b}(\eta)}{\eta^2}\ d\eta\right],
\end{equation}
where $C_d$ is a positive, absolute universal constant depending only on the spatial dimension $d\geq3$ but not on any norm of $u$, $b$ or $p$ (provided the integrals converge).
\end{lem}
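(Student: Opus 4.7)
My plan is to follow the Silvestre strategy of exploiting the divergence-free conditions on both $b$ and $u$ to extract a hidden cancellation in a pointwise representation of $\nabla p$, then bound $|\nabla p(x)-\nabla p(y)|$ by a standard near-field/far-field decomposition at radius $\xi := |x-y|$.

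The starting point is the identity $-\Delta p = \sum_{i,j}\partial_i\partial_j(b_iu_j)$, so that $\nabla p$ is a convolution of $\nabla\phi$ against this right-hand side, where $\phi$ is the Newtonian potential on $\rd$ (or the periodic Green's function of $-\Delta$ on $\tr$). For any fixed $x$, since $\nabla\cdot b=\nabla\cdot u=0$, expanding shows
\[
\sum_{i,j}\partial_{z_i}\partial_{z_j}[b_i(z)u_j(z)] = \sum_{i,j}\partial_{z_i}\partial_{z_j}\bigl[(b_i(z)-b_i(x))(u_j(z)-u_j(x))\bigr],
\]
because the two mixed terms vanish (one carrying $\sum_i\partial_{z_i}b_i=0$, the other $\sum_j\partial_{z_j}u_j=0$) while the constant term is annihilated by the derivatives. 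Integration by parts twice against $\nabla\phi$ then yields
\[
\nabla p(x) = \int [b_i(x-z)-b_i(x)][u_j(x-z)-u_j(x)]\,\partial_i\partial_j\nabla\phi(z)\,dz,
\]
whose kernel $K_{ij}(z):=\partial_i\partial_j\nabla\phi(z)$ is smooth off the origin and homogeneous of degree $-d-1$. The double cancellation makes the integrand absolutely integrable: the pointwise bound $|\delta_zb(x)\delta_zu(x)||K(z)|\lesssim \omega_b(|z|)\omega_u(|z|)|z|^{-d-1}$ gives a polar integrand $\omega_b(\eta)\omega_u(\eta)/\eta^2$, which is locally integrable for any moduli.

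With the representation in hand, I would fix $x,y$ with $\xi:=|x-y|$ and split at a threshold $R\asymp\xi$. On the near-field $\{|z|\leq R\}$, I would estimate the integrals at $x$ and $y$ \emph{separately} using the pointwise bound to obtain
\[
\int_{|z|\leq R}\frac{\omega_b(|z|)\omega_u(|z|)}{|z|^{d+1}}\,dz \lesssim \int_0^R\frac{\omega_b(\eta)\omega_u(\eta)}{\eta^2}\,d\eta,
\]
producing the first term of $\widetilde\omega(\xi)$. On the far-field $\{|z|>R\}$, I would combine the two integrals and apply the telescoping identity
\[
\delta_zb(x)\delta_zu(x) - \delta_zb(y)\delta_zu(y) = [\delta_zb(x)-\delta_zb(y)]\delta_zu(x) + \delta_zb(y)[\delta_zu(x)-\delta_zu(y)],
\]
with $|\delta_zb(x)-\delta_zb(y)|\leq 2\omega_b(\xi)$ (since each of $|b(x-z)-b(y-z)|$ and $|b(x)-b(y)|$ is at most $\omega_b(\xi)$), analogously for $u$, while $|\delta_zu(x)|\leq\omega_u(|z|)$ and $|\delta_zb(y)|\leq\omega_b(|z|)$. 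Integrating against $|z|^{-d-1}$ in polar coordinates produces the two remaining terms $\omega_b(\xi)\int_R^\infty\omega_u(\eta)\eta^{-2}d\eta$ and $\omega_u(\xi)\int_R^\infty\omega_b(\eta)\eta^{-2}d\eta$, completing $\widetilde\omega$.

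The main technical obstacle is justifying the representation itself in each of the two settings. In the whole-space case, $R_iR_j$ is bounded only on $L^q$ for $1<q<\infty$, so the hypothesis $b\in L^q$, $u\in L^\infty$ ensures $p$ is well-defined, and the vanishing divergence on \emph{both} factors is precisely what converts a principal-value Calder\'on--Zygmund integral into the absolutely convergent one above. In the periodic case, $\phi$ must be replaced by the periodic Green's function on $\tr$, which agrees with $C_d|z|^{2-d}$ near the origin up to a smooth remainder; the near-field bound is therefore unaffected, and the far-field estimate is performed on a single fundamental domain. These issues are handled carefully in \cite[Lemma 4.1]{Ibdah2022}, of which the present statement is a direct adaptation from the diagonal case $b=u$ to the full bilinear setting.
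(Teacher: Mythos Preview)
Your proposal is correct and follows precisely the Silvestre strategy that the paper itself invokes: the divergence-free identity yielding the absolutely convergent representation \eqref{reppress} (in bilinear form), followed by the standard near-field/far-field split at radius $\xi=|x-y|$ with a telescoping estimate on the far piece. The paper omits the proof entirely, citing \cite[Lemma~4.1]{Ibdah2022} and the discussion surrounding \eqref{reppress}, so your sketch is exactly the argument being referenced.
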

Silvestre's result was reproduced by Constantin \cite{Constantin2014} (see also \cite{DS2014, Isett2013regularity, IO2016} for similar bounds in the context of convex integration). Armed with Lemma \ref{pressest}, we show in \S\ref{pfmainthm}, below, that if we require $|u(t,x)-u(t,y)|\leq g(t)|x-y|^{\beta}$, then we get $|u(t,x)-u(t,y)|\leq \Omega(t,|x-y|)$ provided $\Omega$ is a smooth, time-dependent modulus of continuity and satisfies
\begin{equation}\label{c21}
	\partial_t\Omega(t,\xi)-4\nu\partial^2_\xi\Omega(t,\xi)- g(t)\xi^{\beta}\partial_\xi\Omega(t,\xi)\geq C_d g(t)\left[\int_0^{\xi}\Omega(t,\eta)\eta^{\beta-2}d\eta+\frac{\Omega(t,\xi)}{(1-\beta)\xi^{1-\beta}}\right]
\end{equation}
for every $(t,\xi)\in(0,T]\times(0,\infty)$. One can, of course, make the same ansatz $\Omega(t,\xi)=\omega(\mu\xi)$, and realize that since \eqref{NSE} scales exactly like \eqref{ddintro}, one should require $\mu\approx g^{1/(1+\beta)}$ in order to balance diffusion with transport and the non-local contribution from the pressure. However, the stationary $\omega$ now needs to (roughly) satisfy $4\nu\omega''(\sigma)+\sigma^{\beta}\omega'(\sigma)+\sigma^{\beta-1}\omega(\sigma)\leq0$. This is bad news, since the lower order term would cause the solution $\omega$ to be decreasing for large enough $\sigma$ (and in fact become negative at some point), meaning that it no longer is a modulus of continuity. Heuristically, this tells us that diffusion (by itself) can only balance transport (at least in the context of propagating moduli of continuity), and that one has to take advantage of the full parabolic operator, not just the elliptic part, to balance out the pressure. Taking advantage of the linearity of the condition \eqref{c21} that $\Omega$ needs to satisfy, one can instead make the ansatz $\Omega(t,\xi)=\lambda(t)\omega(\mu\xi)$, but then quickly realizes that, since $\omega''(0^+)=-\infty$, dissipation balances both the transport and the pressure term when $\sigma=\mu\xi$ is small, while for $\sigma$ away from zero, one can rely on the time derivative to absorb the instabilities from the pressure term. This would, unfortunately, require the amplitude $\lambda$ to satisfy $\lambda'\approx g^{2/(1+\beta)}\lambda$, meaning that a critical assumption needs to be made. The main contribution of this work is to consider more general moduli of continuity, those that are not necessary given by the above ansatz, in order to push towards a super-critical regularity criterion for the NSE \eqref{NSE}. 

%, a result that was first proven by Silvestre and Vicol \cite{SV2012}. This was done rigorously in \cite{Ibdah2022}, where we provided a different proof to the result of Silvestre and Vicol, and extended such a regularity criterion to the fractional diffusion case. 
\subsubsection{Enhanced parabolic regularity}\label{heur2}
First off, by considering $\Omega=\tilde{\Omega}+\epsilon\omega$, we can for the moment ignore the condition $\partial^2_\xi\Omega(t,0^+)=-\infty$. Let us turn our attention back to drift-diffusion and focus on solutions to the initial-boundary value problem
\begin{align*}
	&\partial_t\Omega(t,\xi)-4\nu\partial^2_\xi\Omega(t,\xi)-g(t)\xi^{\beta}\partial_\xi\Omega(t,\xi)=0,\quad (t,\xi)\in(0,T]\times(0,\infty),\\
	&\Omega(t,0)=0,\\
	&\Omega(0,\xi)=\Omega_0(\xi),
\end{align*}
where $\Omega_0$ is a modulus of continuity, in particular, it is non-decreasing. Our goal is to estimate the Dirichlet-to-Neumann map. To do so, we denote the odd extension of $\xi^{\beta}$ by $h(\xi)$, extend $\Omega_0$ in an odd fashion about $\xi=0$, and consider solutions to the PDE in the whole space. Moreover, if we further assume that $\Omega_0$ is concave on $[0,\infty)$, then it is easy to show that $\cv:=\partial_\xi\Omega$ is a solution to 
\begin{equation}\label{eqcvheur}
	\partial_t\cv(t,\xi)-4\nu\partial^2_\xi\cv(t,\xi)-g(t)\partial_\xi[h(\xi)\cv(t,\xi)]=0,\quad \cv(0,\xi)=\Omega_0'(\xi)
\end{equation}
such that $\cv(t,\cdot)$ satisfies the symmetry assumption (S) stated in Theorem \ref{secondthm} for every $t\in[0,T]$; in particular, we have $\cv(t,0)=\|\cv(t,\cdot)\|_{L^{\infty}}$ for every $t\in[0,T]$, and the anti-derivative of $\cv$ is a (concave) time-dependent modulus of continuity according to Definition \ref{deftimdepmod}. This follows from the classical maximum/minimum principle, together with the fact that $h(\xi)$ is non-decreasing on all of $\rone$ and is concave on $[0,\infty)$, see Theorem \ref{classicalthm} below. The singularity in $h'$ at $\xi=0$ makes it very difficult to obtain $L^{\infty}$ bounds on $\cv$, and frankly speaking is the heart of the matter. The key observation we make in order to overcome this difficulty is the following: let us for the moment assume $h$ and $g$ are both smooth and bounded, and let $Z(t,\xi;s,\sigma)$ be the fundamental solution to the operator 
\begin{equation}\label{defopd}
\cd_{t,\xi}:=\partial_t-4\nu\partial^2_\xi-g(t)\partial_\xi[h(\xi)\cdot];
\end{equation}
in particular, we can represent the solution by 
\[
\cv(t,\xi)=\intsol Z(t,\xi;0,\sigma)\cv_0(\sigma)d\sigma,
\]
so that to estimate $\cv(t,0)=\|\cv(t,\cdot)\|_{L^{\infty}}$, we need to get a bound on $\|Z(t,0;0,\cdot)\|_{L^1}$. To do so, we exploit the fact that, for fixed $(t,\xi)$, $Z$ solves the adjoint equation in the $(s,\sigma)$ variables (backwards in time): $-\partial_s Z-4\nu\partial^2_\sigma Z+g(s)h(\sigma)\partial_\sigma Z=0$. Notice that the singular term disappears, and we end up with nothing but a transport-diffusion equation. As a consequence, we obtain (see Theorem \ref{FSest} below)
\begin{equation}\label{c22}
	\|Z(t,0;s,\cdot)\|_{L^1}\leq 1+8e^{\|g\|_{L^1}}\left[\nu^{\frac{\beta-1}{2}}\int_s^t(t-r)^{\frac{\beta-1}{2}}g(r)dr+\nu^{\frac{\beta-2}{2}}\|g\|_{L^1}\int_s^t(t-r)^{\frac{\beta-2}{2}}g(r)dr\right],
\end{equation}
which to our knowledge is a first of a kind bound for the fundamental solution of a parabolic operator that isn't in divergence form in the presence of a \emph{singular} lower order coefficient. We mention in passing that one also has a sharper (yet more cumbersome to deal with) bound, see \eqref{scaledopnorm}, below. Bound \eqref{c22} immediately tells us that we can control $\cv$ in $L_t^1L_x^{\infty}$ in terms of the initial data and $\|g\|_{L^1}$ only, as long as $\beta>0$. Such a strategy is more versatile than dynamically rescaling a stationary modulus of continuity, and is a key ingredient used in proving Theorem \ref{mainthm}. It also allows us to drop the non-decreasing assumption we had to impose on $g$ when using the ansatz, at the expense of requiring $g\in L^1(0,T)$ rather than $L^{1/(1+\beta)}(0,T)$, as well as the introduction of a constant that blows up as $\beta\rightarrow0^+$. 

We remark that bound \eqref{c22} is \emph{not true} if we drop the transport term in \eqref{defopd} and consider instead the operator $\bar{\cd}_{t,\xi}:=\partial_t-4\nu\partial^2_\xi-gh'(\xi)$, as an example provided to us by Elgindi \cite{ElgindiPrivateComm} tells (see Appendix \ref{appce} below). The same example essentially reveals that bound \eqref{c22} is not true even if we flip the sign of the transport term. Indeed, for initial data (and hence solutions) satisfying (S) in Theorem \ref{secondthm}, the sign of the transport term is always good/stabilizing, which is what we would like to quantify. There seems to be an implicit \emph{enhanced regularity} effect coming from the transport term: it seems to ``push'' the flow away from the singularity present in the lower order term at $\xi=0$, which aids diffusion to stabilize the solution, ``enhancing'' the classical parabolic regularity. A \emph{heuristically} similar phenomenon is observed in the 1-dimensional inviscid Burger's equation with added symmetries: it is known that solutions to $\partial_tu-u\partial_x u=0$ do not develop any singularity \emph{forward} (but not backward) in time provided the initial data is non-increasing. The same initial data will lead to a solution that blows up forward (but not backward) in time if we evolve it according to $\partial_t u+u\partial_xu=0$. Our aim now is to better understand this phenomenon (in the context of operator $\cd_{t,\xi}$ \eqref{defopd}), in particular, we shall explain that this ``enhanced regularity'' effect is a property that is embedded into the stochastic flow map associated to the operator $\cd_{t,\xi}$. In other words, it is an intrinsic property of the operator $\cd_{t,\xi}$ that extends to solutions that aren't necessarily symmetric. We start by recalling the classical Feynman-Kac formula: if $\Phi$ is the stochastic flow map associated with the transport-diffusion operator $\partial_t-4\nu\partial^2_\xi-b\partial_\xi$:
\begin{equation}\label{c23}
	\Phi(t,\xi)=\xi-\int_0^tb(s,\Phi(s,\xi))ds+\sqrt{8\nu}W(t),
\end{equation}
where $W$ is standard Brownian motion, and if 
\begin{equation}\label{c24}
	\eta(t,\xi):=\exp\left(\int_0^tc(s,\Phi(s,\xi))ds\right),
\end{equation}
then the solution to $\partial_t\cv-4\nu\partial^2_\xi\cv-b(t,\xi)\partial_\xi\cv-c(t,\xi)\cv=0$ with initial data $\cv_0$ is given by 
\[
\cv(t,\xi)=\mathbb{E}\left[\cv_0(\ca_{t,\xi})\eta(t,\ca_{t,\xi})\right],
\]
where we used the notation $\ca(t,\xi)=\ca_{t,\xi}:=\Phi^{-1}(t,\xi)$ for the inverse flow map, and $\mathbb{E}$ corresponds to averaging over the standard Wiener space (integration over the probability space with respect to the standard Wiener measure). Notice that the effect of the lower order term is encoded in $\eta$, so let us see what happens when $c=\partial_\xi b$. Differentiating \eqref{c23}, we see that $\Psi(t,\xi):=\partial_\xi\Phi(t,\xi)$ solves 
\[
\Psi(t,\xi)=1-\int_0^tc(s,\Phi(s,\xi))\Psi(s,\xi)ds\Rightarrow \Psi(t,\xi)=\partial_\xi\Phi(t,\xi)=\exp\left(-\int_0^tc(s,\Phi(s,\xi))ds\right)=\frac{1}{\eta(t,\xi)},
\]
we therefore have (as $\ca_{t,\xi}$ is the inverse flow map),
\begin{equation}\label{c25}
	\eta(t,\ca_{t,\xi})=\frac{1}{\partial_\xi\Phi(t,\Phi^{-1}(t,\xi))}=\partial_\xi\Phi^{-1}(t,\xi)=\partial_\xi\ca(t,\xi),
\end{equation}
and hence $\cv(t,\xi)=\mathbb{E}[\partial_\xi\ca(t,\xi)\cv_0(\ca_{t,\xi})]$. It was shown in \cite[Proposition 2.1.2]{Gautamthesis}, and also  \cite[Proposition 4.2]{CI2008} that $\ca$ solves 
\begin{equation}\label{c26}
	\partial_t\ca-4\nu\partial^2_\xi\ca-b\partial_\xi\ca+\sqrt{8\nu}\dot{W}\partial_\xi\ca=0,
\end{equation}
with odd initial data $\ca(0,\xi)=\xi$. Taking $b(t,\xi):=g(t)h(\xi)$ makes $\bar{\cb}:=\mathbb{E}[\partial_\xi\ca]$ solve $\partial_t\bar{\cb}-\nu\partial^2_\xi\bar{\cb}-\partial_\xi[b\bar{\cb}]=0$ with initial data $\bar{\cb}(0,\xi)=1$: it satisfies (S) in Theorem \ref{secondthm}. In particular, $\|\bar{\cb}(t,\cdot)\|_{L^{\infty}}$ obeys the same right-hand side bound in \eqref{c22}, allowing us to conclude that solutions to \eqref{eqcvheur} with \emph{arbitrary} initial data (not just symmetric ones) are in $L^1_tL_x^{\infty}$ provided $g\in L^1$, via utilizing the Feynman-Kac representation $|\cv(t,\xi)\|\leq \|\cv_0\|_{L^{\infty}}\|\bar{\cb}(t,\cdot)\|_{L^{\infty}}$. Note carefully that \emph{the average} of $\cb$, $\bar{\cb}$, satisfies (S), $\cb$ itself does not due to the It\^o noise. 

On the other hand, if $c\neq\partial_\xi b$ (for instance, if $b=0$), then we no longer have $\eta(t,\ca_{t,\xi})=\partial_\xi\ca(t,\xi)$, and thus the above analysis breaks down. We do not know how to take advantage of the phenomenon in general. If we flip the sign of the transport term (or remove it all together) while keeping the singular lower order term as is in the operator $\cd_{t,\xi}$, that is, if we consider the operator 
\[
\widetilde{\cd}_{t,\xi}:=\partial_t-4\nu\partial^2_\xi+g(t)h(\xi)\partial_\xi-g(t)h'(\xi),
\]
we lose this enhanced regularity effect. Notice that solutions to $\widetilde{D}_{t,\xi}\cv=0$ still preserve the symmetry (S), in which case the transport term no longer has a good sign (in fact, it has a bad sign and amplifies the instabilities from the singular lower order term). We give a counter example in Appendix \ref{appce} (which is essentially due to Elgindi). 

Finally, we remark that diffusion, of course, has to be there, for if not, then the flow map (and its inverse) becomes odd ($\Phi(t,0)=0=\ca(t,0)$), and so for symmetric initial data, one has 
\[
\|\cv(t,\cdot)\|_{L^{\infty}}=\cv(t,0)=\partial_\xi\ca(t,0)\cv_0(0)=\|\cv_0\|_{L^{\infty}}\exp\left(h'(0)\int_0^tg(s)ds\right),
\]
which is bad news since $h'$ is singular at $\xi=0$. Such analysis allows us to treat the Navier-Stokes equations as ``perturbation'' of drift diffusion, as we now explain.
\subsubsection{The pressure term as a perturbation}\label{heur3}
Recall that in order for the solution to the NSE to obey $\Omega$, the latter needs to satisfy \eqref{c21}. An integration by parts on the right-hand side  yields the condition
\begin{equation}\label{c27}
	\partial_t\Omega(t,\xi)-4\nu\partial^2_\xi\Omega(t,\xi)-g(t)\xi^{\beta}\partial_\xi\Omega(t,\xi)\geq C_d g(t)\int_0^{\xi}\partial_\eta\Omega(t,\eta)\eta^{\beta-1}d\eta.
\end{equation}
Since we have $\partial_\xi\Omega\geq0$, we see that for any $\mu_1\geq1$, if $\Omega$ solves 
\[
\partial_t\Omega(t,\xi)-4\nu\partial^2_\xi\Omega(t,\xi)-\mu_1g(t)\xi^{\beta}\partial_\xi\Omega(t,\xi)=C_{d,\beta} g(t)\int_0^{\xi}\partial_\eta\Omega(t,\eta)\eta^{\beta-1}d\eta,\quad C_{d,\beta}=C_d\beta^{-1},
\]
then clearly $\Omega$ satisfies \eqref{c27}. The idea is to choose $\mu_1$ sufficiently large to make sure that the dynamics are dominated by the transport-diffusion part, allowing us to treat the non-local right-hand side coming from the pressure as a perturbation. Of course, the larger $\mu_1$ is, the worse the bound \eqref{c22} becomes (replace $g$ by $\mu_1 g$ in that bound), and so this approach comes at a price. We remark that the drift-term has to be present to begin with in order to take advantage of it: in particular, the associated non-local equation \emph{does not} have a minimum principle, see Appendix \ref{appce}. The hope is to be able to choose $\mu_1$ independent of any critical or subcritical entities to end up with a meaningful bound on $\cv:=\partial_\xi\Omega$, which happens to solve
\[
	\partial_t\cv(t,\xi)-4\nu\partial^2_\xi\cv(t,\xi)-\mu_1g(t)\partial_\xi[h(\xi)\cv(t,\xi)]=C_dg(t)h'(\xi)\cv(t,\xi).
\]
It is clear that even with the added symmetries (S) from Theorem \ref{secondthm} (which again are still preserved by the evolution owing to classical maximum/minimum principle together with the concavity of $h$ on the positive half-line), the adjoint equation in this case still has a singular lower order term that we do not know how to handle. 

Nonetheless, we may still employ the Feynman-Kac formula: if we set $\mu_2:=\mu_1+C_{d,\beta}$, let $c(t,\xi)=\mu_2g(t)h'(\xi)$, $b:=\mu_1g(t)h(\xi)$, let $\Phi$ be the stochastic diffeomorphism that solves \eqref{c23} with drift $b$, let $\ca_{t,\xi}:=\Phi^{-1}(t,\xi)$ be its inverse, and let $\eta$ be as given in \eqref{c23}. It follows that $\cv(t,\xi)=\mathbb{E}[\eta(t,\ca_{t,\xi})\cv_0(\ca_{t,\xi})]$. If we define $\lambda:=\mu_2/\mu_1$ then we have $c=\lambda\partial_\xi b$, so that analogous to \eqref{c25}, we have $\eta(t,\ca_{t,\xi})=[\partial_\xi\ca(t,\xi)]^{\lambda}$. Thus, $\cv(t,\xi)=\mathbb{E}[(\partial_\xi\ca(t,\xi))^{\lambda}\cv_0(\ca_{t,\xi})]$, and so to bound $\|\cv(t,\cdot)\|_{L^{\infty}}$, we need to control the $\lambda$'th moment of $\cb:=\partial_\xi\ca$, with the latter one solving
\begin{equation}\label{c27n}
	\partial_t\cb-4\nu\partial^2_\xi\cb-\mu_1g(t)\partial_\xi\left[h(\xi)\cb(t,\xi)\right]+\sqrt{8\nu}\dot{W}(t)\partial_\xi\cb(t,\xi)=0,
\end{equation}
as $\ca$ solves \eqref{c26}. Analogous to the observation made in our previous work \cite{Ibdah2021} (and assuming the solution to \eqref{c26} is $C^1$ in time, which is \emph{not} the case as Brownian motion is nowhere differentiable), whenever we have a term of the form $F(t,\nabla\theta)$ (for example a transport term with a spatially independent drift), then such a term makes no contribution when propagating moduli of continuity (see Lemma \ref{localmaxprincp} and how its applied in the proof of Theorem \ref{mainthm} in section \ref{pfmainthm} below). This means that, at least theoretically speaking, one could conjecture the bound (assuming $\nu=1$ for simplicity)
\begin{equation}\label{c28}
	\sup_{\xi\in\rone}\mathbb{E}[\cb(t,\xi)^{\lambda}]\leq \left[1+8\mu_1e^{\|g\|_{L^1}}\left[\int_0^t(t-r)^{\frac{\beta-1}{2}}g(r)dr+\mu_1\|g\|_{L^1}\int_0^t(t-r)^{\frac{\beta-2}{2}}g(r)dr\right]\right]^\lambda.
\end{equation}
Such a bound would (theoretically) follow from the bound we have on the fundamental solution \eqref{c22} (with $g$ being replaced with $\mu_1g$) after propagating moduli of continuity to solutions of the transport-diffusion equation $\partial_t-4\nu\partial^2_\xi-\mu_1g(t)h(\xi)\partial_\xi+\sqrt{8\nu}\dot{W}(t)\partial_\xi$. Since $\mu_1\geq1$ can be chosen arbitrarily large, $\lambda=1+\epsilon$, and so assuming \eqref{c28} is valid, we get $L_t^1L_x^{\infty}$ a-priori bounds on $\cv$ in terms of $\|g\|_{L^{1+\epsilon}}$, for any $\epsilon>0$. Unfortunately, this is much easier said than done: we do not know how to prove \eqref{c28} if $\lambda>1$. The issue is that the coefficient of the It\^o term matches exactly that of diffusion: equation \eqref{c26} is essentially a transport equation with Stratonovich multiplicative noise (no diffusion is seen unless we take expectations). Hence if (for example) we try to approximate Brownian motion with $C^1$ processes (in order to be able to propagate moduli of continuity), the Wong-Zakai principle tells us the approximating sequence solves a transport equation with no diffusion. The other option is to use Duhamel's principle in the stochastic equation for $\cb$ \eqref{c27n},
\[
\cb(t,\xi)=\intsol Z(t,\xi;0,\sigma)d\sigma-\sqrt{8\nu}\int_0^t\intsol Z(t,\xi;s,\sigma)\partial_\sigma\cb(s,\sigma)d\sigma dW_s,
\]
to estimate the $\lambda$'th moment of $\cb $. This would require us to use BDG-type inequalities to handle the It\^o integral, and the problem with those is that they require control of the quadratic variation. Such an approach would also require us to integrate by parts and bound a derivative of the fundamental solution $L^1$ in space, which has a singularity of the form $(t-s)^{-1/2}$. This certainly is not in $L^2$, and hence the BDG approach would fail miserably. It is worth pointing out that this problem has been investigated heavily by Flandoli and many of his collaborators over the years, see for instance \cite{BFGM2019, FGP2010} and the references therein. In all of those results, they were more or less able to control moments of the derivative of the flow map under a critical or sub-critical assumption on the drift term. Here, our drift is at a supercritical level, and it is not clear to us how to bound any moment other than the first one in terms of supercritical entities.

Let us now point out that the conjectured bound \eqref{c28} is in line with the idea of ``treating the pressure term perturbatively'': $\lambda=1+\epsilon$ if $\mu_1$ is of order $\epsilon^{-1}$. Ignoring the factor of $\mu_1$ in bound \eqref{c28}, as $\epsilon$ tends to zero, we converge to \eqref{c22}, the drift-diffusion case with no pressure. Our goal is to treat the pressure perturbatively in the sense that the ``structure'' of drift-diffusion (which we know how to handle) becomes dominant. Although we are unable to prove this when measuring $L^{\infty}$ norms, we shall demonstrate that this idea works if we instead control the $L^1$ norm, which is conserved under evolution of the operator $\cd_{t,\xi}$ \eqref{defopd}. This is nothing but another proof of the classical maximum principle, since $\|\Omega(t,\cdot)\|_{L^{\infty}}\leq \|\cv(t,\cdot)\|_{L^1}$, as $\cv=\partial_\xi\Omega$. Such an $L^1$ bound on $\cv$ translates to an $L^{\infty}$ bound on $u$. Indeed, recall that as long as $\Omega$ solves \eqref{c27}, then the solution to NSE satisfies $|u(t,x)-u(t,y)|\leq \Omega(t,|x-y|)$ for any $t,x,y$. Rather than using $\Omega$ to control $\|\nabla u(t,\cdot)\|_{L^{\infty}}$ in terms of $\partial_\xi\Omega(t,0)$, we may use it to control $\|u(t,\cdot)\|_{L^{\infty}}$ in terms of $\|\Omega(t,\cdot)\|_{L^{\infty}}$, which in turn is controlled by $\|\cv(t,\cdot)\|_{L^1}$. This holds true in both the periodic and whole space setting: in the former we can without any loss in generality assume the solution has zero spatial average, while in the latter case we assume the solution vanishes at infinity. From the representation $\cv(t,\xi)=\mathbb{E}[[\cb(t,\xi)]^\lambda\cv_0(\ca_{t,\xi})]$, with $\cb(t,\xi)=\partial_\xi\ca_{t,\xi}$, an application of Fubini-Tonelli followed by the fact that $\ca_{t,\xi}$ is a diffeomorphism, and another Fubini-Tonelli employment we get 
\[
\intsol|\cv(t,\xi)|d\xi\leq \mathbb{E}\left[\intsol\cb(t,\Phi(t,\xi))^{\lambda-1}|\cv_0(\xi)|d\xi\right]\leq\|\cv_0\|_{L^1}\sup_{\xi\in\rone}\mathbb{E}[\cb^{\lambda-1}(t,\xi)].
\]
Recall that $\lambda=\mu_2/\mu_1=1+C_{d,\beta}/\mu_1$, and that $\mu_1\geq1$ can be chosen arbitrarily larger. Thus, $\lambda-1$ can be made arbitrarily small by choosing $\mu_1$ large enough: in particular, $\lambda-1\in[0,1]$. An application of H\"older's inequality renders $\mathbb{E}[\cb^{\lambda-1}(t,\xi)]\leq [\bar{\cb}(t,\xi)]^{\lambda-1}$, where $\bar{\cb}=\mathbb{E}[\cb]$ (since we are working on a probability space, whose measure is one). Averaging \eqref{c27n} over the probability space (taking expectations) annihilates the white noise, making $\bar{\cb}$ a solution to 
\[
\partial_t\bar{\cb}-4\nu\partial^2_\xi\bar{\cb}-\mu_1g(t)\partial_\xi\left[h(\xi)\bar{\cb}(t,\xi)\right]=0,\quad \bar{\cb}(0,\xi)=1,
\]
that satisfies the symmetric properties (S) from Theorem \ref{secondthm}, whence the bound 
\[
\|\bar{\cb}(t,\cdot)\|_{L^{\infty}}\leq1+8\mu_1e^{\|g\|_{L^1}}\left[\int_0^t\nu^{\frac{\beta-1}{2}}(t-r)^{\frac{\beta-1}{2}}g(r)dr+\nu^{\frac{\beta-2}{2}}\mu_1\|g\|_{L^1}\int_0^t(t-r)^{\frac{\beta-2}{2}}g(r)dr\right]=:G(0,t).
\]
From here, for any $\epsilon\in(0,1)$, if we choose $\mu_1=
C_{d,\beta}/\epsilon\geq C_{d,\beta}\geq1$, we have $\lambda-1=\epsilon$ and $\|\cv(t,\cdot)\|_{L^1}\leq\|\cv_0\|_{L^1}[G(0,t)]^{\epsilon}$. In particular, for any $q\in[1,\infty)$, choosing $\epsilon=1/q$ we get the bound 
\[
\int_0^T\|u(t,\cdot)\|_{L^{\infty}}^qdt\leq \|\cv_0\|_{L^1}^q\int_0^TG(0,t)dt\lesssim M,
\]
where $M$ is a constant depending only on initial data, $q$, and $\|g\|_{L^1}$. Regularity then follows by Ladyzhenskaja-Prodi-Serrin by making sure $q>2$, as demonstrated in Appendix \ref{applps}. This is also inline with the idea of treating the pressure perturbatively. To better demonstrate this, let us for the moment ignore the factor of $1/\epsilon$ that shows up in $G(0,t)$ due to the drift being of order $1/\epsilon$ (of course we \emph{cannot} ignore it and it will show up in the final estimate). As $\epsilon\rightarrow0^+$, we get $\|\cv(t,\cdot)\|_{L^1}\leq \|\cv_0\|_{L^1}$, which certainly is the case for the operator $\cd_{t,\xi}$ \eqref{defopd}. The pressure is treated perturbatively in the sense that the dynamics of classical transport-diffusion dominates, allowing us to extend the supercritical regularity to the incompressible NSE.
\subsection{Preliminaries}\label{secprel}
As indicated earlier, Lemmas \ref{gradbd} and \ref{localmaxprincp} below are ``corollaries'' of Lemmas 3.2 and 3.3 respectively in our previous work \cite{Ibdah2022}, which in turn are based on the arguments in \cite{Kiselev2011, KNS2008, KNV2007}. Theorems \ref{sdiffexis} and \ref{classicalthm} contain classical results which can be found in, for instance, \cite{KunitaNotes1984,Kunitabook1997} and \cite{Friedmanbook1964,IKO1962,LSUbook1968}. We only justify some of the niche claims.
\begin{lem}\label{gradbd}
Suppose a vector field $u\in C^2(\mathbb{R}^d)\cap W^{2,\infty}(\rd)$ has a Lipschitz modulus of continuity $\omega$. It then follows that $\|\nabla u\|_{L^{\infty}}<\omega'(0)$. Here, for a vector-field $u$, the Lipschitz constant is defined as 
\[
\|\nabla u\|_{L^\infty}:=\sup_{x\in \rd}\sup_{\substack {e\in \rd\\ |e|=1}}\left|J_u(x)e\right|,
\]
where $J_u(x)$ is the Jacobian matrix of $u$, the matrix whose row vectors are $\nabla u_j$, evaluated at a point $x\in\rd$.
\end{lem}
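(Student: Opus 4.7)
The plan is to establish the bound pointwise at an arbitrary $x\in\rd$ and unit direction $e\in\rd$, and then take a supremum. The starting point will be a first-order Taylor expansion: since $u\in C^2\cap W^{2,\infty}(\rd)$, for any $h>0$ I have
\[
u(x+he)-u(x)=h\,J_u(x)e+R(h),\qquad |R(h)|\leq \tfrac{1}{2}\|\nabla^2 u\|_{L^\infty}h^2.
\]
Combining this with the hypothesis $|u(x+he)-u(x)|\leq\omega(h)$ and dividing through by $h$ produces the master inequality
\[
|J_u(x)e|\ \leq\ \frac{\omega(h)}{h}+Ch,\qquad C:=\tfrac{1}{2}\|\nabla^2 u\|_{L^\infty}<\infty.
\]

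Sending $h\to 0^+$ would only give the non-strict $|J_u(x)e|\leq\omega'(0)$, so to reach strict inequality the idea is to exploit the remaining freedom via the condition $\omega''(0^+)=-\infty$ baked into Definition \ref{defmod}. Concretely, I would pick any constant $K>2C$; by the definition of $\omega''(0^+)=-\infty$, there exists $\delta>0$ with $\omega''(s)<-K$ for all $s\in(0,\delta)$. Integrating twice from $0$ (using $\omega(0)=0$ and the finite value $\omega'(0)$) gives the quantitative concavity bound $\omega(h)\leq\omega'(0)h-\tfrac{K}{2}h^2$ on $(0,\delta)$. Plugging this back into the master inequality yields
\[
|J_u(x)e|\ \leq\ \omega'(0)-\bigl(\tfrac{K}{2}-C\bigr)h,\qquad h\in(0,\delta).
\]

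At this stage the proof is essentially complete: fixing any $h_0\in(0,\delta)$ and setting $\varepsilon:=(K/2-C)h_0>0$, I get $|J_u(x)e|\leq\omega'(0)-\varepsilon$. The crucial observation is that $\varepsilon$, $C$, $K$, $\delta$, and $h_0$ depend only on $u$ and $\omega$, not on the particular choice of $x$ or $e$, so taking the supremum over all $(x,e)$ gives $\|\nabla u\|_{L^\infty}\leq\omega'(0)-\varepsilon<\omega'(0)$.

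The only delicate point in the whole plan is the interplay between the two hypotheses: the $W^{2,\infty}$ regularity of $u$ supplies the Taylor remainder $Ch$, and the defining condition $\omega''(0^+)=-\infty$ supplies an arbitrarily large concavity deficit $Kh/2$ near $0$. The latter must beat the former, which is exactly why the $-\infty$ requirement (rather than merely $\omega''(0^+)<0$) is built into the definition of a modulus of continuity; a finite second derivative at the origin would leave room for the Taylor error to swallow the concavity gain, and only the non-strict inequality would survive.
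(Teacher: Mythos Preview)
Your proof is correct and follows the standard argument from \cite{KNS2008, KNV2007} (and \cite{Ibdah2022}, Lemma~3.2), which is exactly what the paper defers to in lieu of giving its own proof. The key mechanism---Taylor expansion with a $W^{2,\infty}$ remainder, combined with the requirement $\omega''(0^+)=-\infty$ to produce a uniform gap $\varepsilon>0$---is precisely the approach used there.
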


\begin{lem}\label{localmaxprincp}
	Suppose $\theta$ is a $C^2(\mathbb{R}^d)$ scalar and has modulus of continuity $\omega$. If $\theta(x_0)-\theta(y_0)=\omega(|x_0-y_0|)$ for some $x_0\neq y_0$ with $x_0-y_0=\xi e_1$, where $e_1$ is the unit vector in the direction of $x_1$ and $\xi>0$, then 
\begin{equation}\label{derv}
\begin{cases}
\partial_1\theta(x_0)=\partial_1\theta(y_0)=\omega'(\xi),\\
\partial_j\theta(x_0)=\partial_j\theta(y_0)=0, \quad j\neq 1,
\end{cases}
\end{equation}
and 
\begin{equation}\label{localdissp}
\Delta\theta(x_0)-\Delta\theta(y_0)\leq 4\omega''(\xi).
\end{equation}
\end{lem}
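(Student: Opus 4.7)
The plan is to exploit the fact that the function
\[
F(x,y) := \theta(x) - \theta(y) - \omega(|x-y|)
\]
is non-positive on $\rd \times \rd \setminus \{x=y\}$ by the hypothesis that $\omega$ is a modulus of continuity for $\theta$, and attains the value $0$ at $(x_0, y_0)$. Hence $(x_0, y_0)$ is an interior global maximum of $F$ on this open set, and the standard first- and second-derivative tests apply. Since $\xi := |x_0 - y_0| > 0$, $\omega$ is $C^2$ in a neighbourhood of $\xi$, so all derivatives that appear below are classical; this side-steps the singularity $\omega''(0^+) = -\infty$ in Definition \ref{defmod}.

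For the first-order statement \eqref{derv}, I would differentiate $F$ separately in the $x$ and $y$ variables. At the maximum point,
\[
\nabla_x F(x_0, y_0) = \nabla\theta(x_0) - \omega'(\xi)\, e_1 = 0, \qquad \nabla_y F(x_0, y_0) = -\nabla\theta(y_0) + \omega'(\xi)\, e_1 = 0,
\]
where I used $x_0 - y_0 = \xi e_1$ to simplify the unit vector $(x_0-y_0)/|x_0-y_0|$. Reading off components yields \eqref{derv}.

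For the second-order statement \eqref{localdissp}, I would probe the Hessian of $F$ along one-parameter paths $\gamma_j(t) = (x_0 + t\,e_j^{(1)}, y_0 + t\,e_j^{(2)})$ chosen differently for each coordinate index $j \in \{1, \dots, d\}$, and use $\frac{d^2}{dt^2} F(\gamma_j(t))\bigr|_{t=0} \le 0$. The choice of directions is the crux: for $j \neq 1$ I would take the \emph{parallel} path $e_j^{(1)} = e_j^{(2)} = e_j$, which keeps $|x(t)-y(t)| \equiv \xi$ constant in $t$, so the $\omega$-term contributes nothing and the inequality collapses to $\partial_j^2\theta(x_0) - \partial_j^2\theta(y_0) \le 0$. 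For $j = 1$ I would take the \emph{antiparallel} path $e_1^{(1)} = e_1$, $e_1^{(2)} = -e_1$, so $|x(t)-y(t)| = \xi + 2t$ for small $t$, and the second derivative of $\omega(\xi+2t)$ at $t=0$ equals $4\omega''(\xi)$, yielding $\partial_1^2\theta(x_0) - \partial_1^2\theta(y_0) \le 4\omega''(\xi)$. Summing these $d$ inequalities produces \eqref{localdissp}.

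There is no serious obstacle, but the main subtlety worth flagging is precisely this choice of paths. A uniform antiparallel ansatz $(e_j, -e_j)$ for every $j$ would, for $j \neq 1$, only yield the weaker inequality $\partial_j^2\theta(x_0) - \partial_j^2\theta(y_0) \le 4\omega'(\xi)/\xi$ (since $|\xi e_1 + 2t e_j|$ has a nonzero second derivative $4/\xi$ at $t=0$ even though its first derivative vanishes), contaminating the right-hand side of \eqref{localdissp} by an unwanted term $(d-1)\cdot 4\omega'(\xi)/\xi$ that is positive and therefore fatal to any subsequent application in the concave regime $\omega'' < 0$. The parallel path in orthogonal directions exploits the first-order geometric fact that a common infinitesimal translation of $x$ and $y$ perpendicular to $x_0 - y_0$ does not change their distance, annihilating the $\omega$-contribution to the Hessian in those directions and cleanly isolating the single $4\omega''(\xi)$ contribution from the direction of the displacement.
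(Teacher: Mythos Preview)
Your proof is correct and is precisely the standard Kiselev--Nazarov--Volberg argument; the paper does not prove this lemma in-text but cites it as a corollary of Lemma~3.3 in \cite{Ibdah2022}, itself based on \cite{Kiselev2011, KNS2008, KNV2007}, which use exactly the approach you describe. Your commentary on why the parallel path is needed for $j\neq 1$ (and why the uniform antiparallel ansatz would contaminate the estimate with a positive $4(d-1)\omega'(\xi)/\xi$ term) is an accurate and useful diagnosis of the one place where the argument could go wrong.
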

\begin{defs}\label{defsdiff}
	Let $(\cq,\cf,\cp)$ be a standard Wiener space equipped with the standard Wiener measure. Here, the probability space $\cq$ is the space of $\rone$ valued continuous functions on $[T_0,T_1]$, for some $0\leq T_0<T_1$. Let $W:[T_0,T_1]\times\cq\rightarrow\rone$ be a standard, one-dimensional Wiener process on the Wiener space, and let $\{\cf_t\}$ be the filtration associated to it. A mapping $\Phi:[T_0,T_1]\times\rone\times\cq\rightarrow\rone$ is called a stochastic diffeomorphism if for every fixed $\xi$, $\Phi$ is a continuous $\{\cf_t\}$ adapted process and for every $t\in[T_0,T_1]$, $\Phi(t,\cdot;q):\rone\rightarrow\rone$ is a diffeomorphism (almost surely in $q$).
\end{defs} 
	
\begin{thm}\label{sdiffexis}
	Assume the probabilistic setting in Definition \ref{defsdiff}, let $b:[T_0,T_1]\times\rone\rightarrow\rone$ be a smooth (twice continuously differentiable), deterministic, bounded function (with bounded derivatives), and suppose $\nu>0$ is a given constant. Then there exists a unique stochastic diffeomorphism $\Phi$ that satisfies the SDE
	\begin{equation}\label{defsde}
	\partial_t\Phi=-b(t,\Phi)+\sqrt{8\nu}\dot{W},\quad \Phi(T_0,\xi;q)=\xi,
	\end{equation}
	in the sense that 
	\begin{equation}\label{ingdefsde}
		\Phi(t,\xi;q)=\xi-\int_{T_0}^tb(s,\Phi(s,\xi;q))ds+\sqrt{8\nu}W(t;q)
	\end{equation}
	holds true for every $(t,\xi)\in[T_0,T_1]\times\rone$ and almost surely in $q$. Moreover, for every fixed $t$, and almost surely in $q$, $\Phi$ is twice continuously differentiable in $\xi$ and the inverse flow map, $\ca:=\Phi^{-1}$, satisfies the stochastic PDE
	\begin{equation}\label{eqinvflw}
		\partial_t\ca(t,\xi;q)-4\nu\partial^2_\xi\ca(t,\xi;q)-b(t,\xi)\partial_\xi\ca(t,\xi;q)+\sqrt{8\nu}\dot{W}(t;q)\partial_\xi\ca(t,\xi;q)=0,\quad \ca(0,\xi;q)=\xi,
	\end{equation}
	in the It\^o sense.
\end{thm}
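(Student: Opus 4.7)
\medskip

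The plan is to establish the three pieces of the theorem in turn: existence/uniqueness of the strong solution, the stochastic $C^2$--diffeomorphism property, and the SPDE satisfied by the inverse flow.

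First, because $b \in C^2_{t,\xi}$ with $b$ and its derivatives bounded, $b(t,\cdot)$ is globally Lipschitz uniformly in $t$. Classical Picard iteration in $L^2(\cq)$ over $[T_0,T_1]$, carried out pointwise in the initial datum $\xi \in \rone$, produces for every $\xi$ a unique continuous $\{\cf_t\}$--adapted solution $\Phi(\cdot,\xi;\cdot)$ of the integral equation \eqref{ingdefsde}, with moment bounds on $\sup_t|\Phi(t,\xi)|$ depending only on $\|b\|_{L^\infty}$ and $T_1-T_0$. This gives the existence and uniqueness half of the statement.

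Next I would upgrade this family of solutions to a flow of $C^2$--diffeomorphisms, following Kunita \cite{KunitaNotes1984,Kunitabook1997}. Formally differentiating \eqref{ingdefsde} in $\xi$ suggests that $\Psi := \partial_\xi\Phi$ should solve the linear (non-stochastic in the martingale part) equation
\[
\Psi(t,\xi) = 1 - \int_{T_0}^t \partial_\xi b(s,\Phi(s,\xi))\Psi(s,\xi)\,ds,
\]
which has the explicit positive solution $\Psi(t,\xi)=\exp\bigl(-\int_{T_0}^t \partial_\xi b(s,\Phi(s,\xi))\,ds\bigr)$. Repeating the argument yields a linear SDE for $\partial_\xi^2\Phi$ whose coefficients involve $\partial_\xi b$, $\partial_\xi^2 b$ and $\Psi$, all bounded. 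Then two-parameter moment estimates on $\Phi(t,\xi)-\Phi(t,\xi')$, $\Psi(t,\xi)-\Psi(t,\xi')$, and $\partial_\xi^2\Phi(t,\xi)-\partial_\xi^2\Phi(t,\xi')$, combined with Kolmogorov's continuity criterion, produce a modification of $\Phi$ that is jointly continuous in $(t,\xi;q)$ and twice continuously differentiable in $\xi$ almost surely; since $\Psi>0$ a.s., the map $\Phi(t,\cdot;q)$ is a strictly increasing $C^2$ diffeomorphism of $\rone$ for each $t$ and almost every $q$. The inverse $\ca=\Phi^{-1}$ is then automatically a random field with the same regularity in $\xi$.

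Finally, to obtain \eqref{eqinvflw}, I would apply the It\^o--Wentzell formula to the identity $\ca(t,\Phi(t,\xi;q);q)\equiv \xi$. By Kunita's theorem, $\ca(\cdot,\eta)$ is itself an It\^o semimartingale for each fixed $\eta$, admitting a decomposition
\[
d_t\ca(t,\eta) = A(t,\eta)\,dt + B(t,\eta)\,dW_t,
\]
with $A,B$ sufficiently regular in $\eta$. Since $d\Phi = -b(t,\Phi)\,dt + \sqrt{8\nu}\,dW_t$, the quadratic variation of $\Phi$ equals $8\nu\,dt$, so the It\^o--Wentzell formula gives
\[
0 = A\,dt + B\,dW_t + \partial_\eta\ca\,(-b\,dt + \sqrt{8\nu}\,dW_t) + \tfrac{1}{2}(8\nu)\partial_\eta^2\ca\,dt + \sqrt{8\nu}\,\partial_\eta B\,dt,
\]
evaluated at $\eta=\Phi(t,\xi)$. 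Matching $dW_t$--coefficients forces $B=-\sqrt{8\nu}\,\partial_\eta\ca$, and substituting into the $dt$--coefficient then yields $A = 4\nu\partial_\eta^2\ca + b\,\partial_\eta\ca$, which is precisely the SPDE \eqref{eqinvflw} written in It\^o form. As the identity $\ca(t,\Phi(t,\xi))=\xi$ holds for every $\xi$ and $\Phi(t,\cdot)$ is surjective, the equation holds at every $\eta\in\rone$.

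The main technical obstacle is not the algebra of the last paragraph but the prerequisites: verifying that $\ca$ is indeed a semimartingale in $t$ with enough joint regularity in $(t,\eta)$ to legitimately apply the It\^o--Wentzell formula. This is the content of Kunita's stochastic-flow theorem and is where the $C^2$ hypothesis on $b$ (with bounded derivatives) is consumed; once it is granted, the derivation of \eqref{eqinvflw} is a short coefficient-matching.
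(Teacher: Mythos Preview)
Your proposal is correct and follows exactly the route the paper takes: the paper does not give an independent argument but simply cites Kunita \cite{KunitaNotes1984,Kunitabook1997} for the existence, uniqueness, and $C^2$--diffeomorphism property of the flow (via the moment estimates and Kolmogorov continuity you sketched), and cites \cite[Proposition~2.1.2]{Gautamthesis} and \cite[Proposition~4.2]{CI2008} for the SPDE \eqref{eqinvflw}, whose proof in those references is precisely the It\^o--Wentzell coefficient-matching you carried out. Your algebra in the final step is right (the cross-variation term $\sqrt{8\nu}\,\partial_\eta B\,dt$ combines with $4\nu\partial_\eta^2\ca$ to produce the correct sign on the diffusion), and your closing remark about the semimartingale regularity of $\ca$ being the genuine technical input is exactly the point.
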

\begin{rmk}\label{impasrmk}
		All of the ``almost sure'' statements above are ``independent'' of the variables $(t,\xi)$: for instance, when we say $\Phi$ satisfies \eqref{ingdefsde} almost surely in $q$, we mean that there exists an event $B\subset \cf$, with $\cp[B]=1$ such that \eqref{ingdefsde} holds true for every $(t,\xi;q)\in[T_0,T_1]\times\rone\times B$. The same applies to all other almost sure statements: one event $B$ works for every $(t,\xi)$.
	\end{rmk}
	\begin{proof}
	The existence, uniqueness, and regularity of the stochastic flow map is classical, see for instance \cite{KunitaNotes1984}. Remark \ref{impasrmk} is explicitly addressed in \cite[Section 7, Chapter 1]{KunitaNotes1984} and then again in the appendix of Chapter 1 by means of a theorem of Kolmogorov. Similar results in a much more general setting (for which ours is a special case) were obtained in \cite[Chapter 3]{Kunitabook1997}. As for the statement regarding the inverse flow map, it was obtained in \cite[Proposition 2.1.2]{Gautamthesis} (see also \cite[Proposition 4.2]{CI2008}).
	\end{proof}
\begin{thm}\label{classicalthm}
	Let $T_0<T_1$, $\nu>0$, and let $b,c,d:[T_0,T_1]\times\rone\rightarrow\rone$, be smooth and bounded. Let $\mathcal{L}_{t,\xi}$ be the parabolic operator $\mathcal{L}_{t,\xi}:=\partial_t-4\nu\partial^2_\xi-b(t,\xi)\partial_\xi-c(t,\xi)$, and let $u_0$ be smooth. Let $u$ solve $\mathcal{L}_{t,\xi}u=d$, with $u(T_0,\xi)=u_0(\xi)$, and let $\mathcal{L}^*_{s,\sigma}:=-\partial_s-4\nu\partial^2_\sigma+\partial_\sigma[b(s,\sigma)\cdot]-c(s,\sigma)$ be the adjoint operator. Then the following statements hold true:
	\begin{enumerate}
	\item $u\geq0$ ($u\leq0$) if $u_0,d\geq0$ ($u_0,d\leq0$).
	\item Let $\Psi$ be the heat kernel (corresponding to $\partial_t-4\nu\partial^2_\xi$). Then there exists a $Q$ such that if \[
	\tilde{Z}(t,\xi;s,\sigma):=\int_s^t\intsol \Psi(t-r,\xi-\mu)Q(r,\mu;s,\sigma)d\mu dr,\quad T_0\leq s<t\leq T_1,\ \xi,\sigma\in\rone,
	\]
	then $Z(t,\xi;s,\sigma):=\Psi(t-s,\xi-\sigma)+\tilde{Z}(t,\xi;s,\sigma)$ solves $\mathcal{L}_{t,\xi}Z=0$ for every fixed $(s,\sigma)\in[T_0,t)\times\rone$, while $\mathcal{L}^*_{s,\sigma}Z=0$ for every fixed $(t,\xi)\in(s,T_1]\times\rone$. Moreover, $Z\geq0$, there exists a constant $c_B$ depending on $\nu$, $b$, and $c$ such that $|Q(r,\mu;s,\sigma)|\leq c_B(r-s)^{-1}\exp(-c_B|\mu-\sigma|^2(r-s)^{-1})$, and for any given smooth $u_0$, if we define
	\[
	u(t,\xi):=\intsol Z(t,\xi;T_0,\sigma)u_0(\sigma)+\int_{T_0}^t\intsol Z(t,\xi;s,\sigma)d(s,\sigma)d\sigma ds,
	\]
	then $u$ solves $\mathcal{L}_{t,\xi}u=d$, with initial data $u(T_0,\xi)=u_0(\xi)$. 
	\item Suppose that $b(t,\cdot)$ is odd about $\xi=0$ and $c(t,\cdot)$ is even about $\xi=0$. If $u_0$ and $d(t,\cdot)$ are both even (odd), it follows that $u(t,\cdot)$ is also even (odd). In the case where $u_0$, $d(t,\cdot)$, and $c(t,\cdot)$ are all even, if we further assume that $u_0$ and $d$ both satisfy (S) from Theorem \ref{secondthm}, while $\partial_\xi c(t,\cdot)\leq 0$ on $(0,\infty)$, then $u(t,\cdot)$ also satisfies (S) for every $t\in[0,T]$.
	\item Let $\Phi$ be the stochastic flow map alluded to in Theorem \ref{sdiffexis}, let $\ca:=\Phi^{-1}$ be its inverse, and let us denote $\ca_{t,\xi}=\ca(t,\xi;q)$. Set 
	\[
	\eta(t,\xi):=\exp\left(\int_{T_0}^tc(r,\Phi(r,\xi))dr\right),
	\]
	and let $u$ solve $\mathcal{L}_{t,\xi}u=d$ with $u(T_0,\xi)=u_0(\xi)$. It follow that we have the representation 
	\[
	u(t,\xi)=\mathbb{E}\left[\eta(t,\ca_{t,\xi})u_0(\ca_{t,\xi})\right]+\mathbb{E}\left[\eta(t,\ca_{t,\xi})\int_{T_0}^t\frac{d(s,\Phi(s,\ca_{t,\xi}))}{\eta(s,\ca_{t,\xi})}ds\right],
	\]
	where $\mathbb{E}$ denotes averaging over the probability space (taking expectations).
	\end{enumerate}
\end{thm}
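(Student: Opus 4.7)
My plan is to dispatch parts 1, 2, and 4 by direct appeal to the classical parabolic theory cited in the excerpt, and devote the actual work to the symmetry/monotonicity assertion in part 3, which is the only genuinely niche claim. Part 1 is the standard weak maximum principle for bounded classical solutions of uniformly parabolic equations with bounded coefficients \cite{Friedmanbook1964, LSUbook1968}. Part 2 is the Levi parametrix construction: writing $Z=\Psi+\tilde Z$ and plugging into $\mathcal{L}_{t,\xi}Z=0$ produces a Volterra integral equation for $Q$ whose iterative resolution yields the stated Gaussian bound, with constant $c_B$ depending only on $\nu$ and the $C^1$ norms of $b,c$; this is precisely the framework of \cite{Friedmanbook1964, IKO1962, LSUbook1968}. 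Part 4, the Feynman-Kac representation, follows by applying It\^o's formula to the process $s\mapsto \eta(s,\Phi(s,\ca_{t,\xi}))\,u(s,\Phi(s,\ca_{t,\xi}))$ and taking expectations, as in \cite{KunitaNotes1984,Kunitabook1997}.

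For the parity preservation in part 3, I would set $\tilde u(t,\xi):=u(t,-\xi)$. A change of variables combined with the oddness of $b(t,\cdot)$ and evenness of $c(t,\cdot)$ shows that $\tilde u$ satisfies the same PDE $\mathcal{L}_{t,\xi}\tilde u = d(t,-\xi)$; when $u_0$ and $d(t,\cdot)$ are both even (resp.\ both odd), the initial and forcing data coincide with $\pm u_0$ and $\pm d$, and uniqueness (via the representation from part 2) forces $u(t,-\xi)=\pm u(t,\xi)$.

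The crux of the theorem is the preservation of (S). Writing $v:=-\partial_\xi u$ and differentiating the equation for $u$ in $\xi$ yields
\begin{equation*}
\partial_t v-4\nu\partial^2_\xi v-b\,\partial_\xi v-[c+\partial_\xi b]\,v = -\partial_\xi d - (\partial_\xi c)\,u.
\end{equation*}
On $(0,\infty)$ the right-hand side is non-negative: $-\partial_\xi d\geq 0$ because $d(t,\cdot)$ satisfies (S), and $-(\partial_\xi c)\,u\geq 0$ because $u\geq 0$ by part 1 and $\partial_\xi c\leq 0$ on $(0,\infty)$ by hypothesis. Because $u(t,\cdot)$ is smooth and even (by parity preservation just established), we have $v(t,0)=0$, and the initial condition $v(0,\xi)=-u_0'(\xi)\geq 0$ on $[0,\infty)$ is guaranteed by (S) for $u_0$. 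The equation for $v$ is uniformly parabolic with bounded coefficients, so the parabolic maximum principle on the half-strip $[0,T]\times[0,\infty)$ forces $v\geq 0$ there; this is exactly the monotonicity in (S), and combined with the already-established non-negativity and evenness finishes part 3.

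The main obstacle is not a deep difficulty but the usual technicality of applying the maximum principle on an unbounded half-strip: one needs sufficient decay of $v$ as $\xi\to\infty$ in order to rule out a maximum escaping to infinity. This is handled by combining the Gaussian upper bound on $Z$ from part 2 with whatever decay $u_0$ and $d$ inherit from (S), producing enough decay of $u$ and hence of $v$ to close the argument via a standard Phragm\'en-Lindel\"of variant of the maximum principle; this bookkeeping step is routine enough that the statement of the theorem can legitimately be flagged as ``classical'' once the structural ingredient above is isolated.
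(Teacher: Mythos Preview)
Your proposal is correct and follows essentially the same approach as the paper. The only cosmetic differences are that the paper establishes parity preservation via symmetry of the Volterra kernel/resolvent rather than your change-of-variables-plus-uniqueness argument (the two are equivalent), and for the preservation of (S) the paper works with $v=\partial_\xi u$ and the minimum principle rather than $v=-\partial_\xi u$ and the maximum principle; the paper also does not spell out the Phragm\'en--Lindel\"of bookkeeping you mention, simply invoking the minimum principle on the half-line.
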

\begin{proof}
Item (1) follows from the standard minimum/maximum principle (since the lower order term $c$ is assumed to be bounded), i.e., apply the minimum/maximum principle to $\tilde{u}(t,\xi):=u(t,\xi)\exp(-\int_{T_0}^t\|c(s,\cdot)\|_{L^{\infty}}ds)$. Item (2) is a classical result, see for instance \cite{Friedmanbook1964, IKO1962, LSUbook1968}. As for item (3), setting 
\begin{align*}
&\varphi(t,\xi):=\intsol \Psi(t-T_0,\xi-\sigma)u_0(\sigma)d\sigma+\int_{T_0}^t\intsol\Psi(t-s,\xi-\sigma)d(s,\sigma)d\sigma ds,\\
&K(t,\xi;s,\sigma):=\partial_\sigma[b(s,\sigma)\Psi(t-s,\xi-\sigma)]+c(s,\sigma)\Psi(t-s,\xi-\sigma),
\end{align*}
we see that by Duhamel's principle and an integration by parts, $u$ solves the Volterra integral equation
\[
u(t,\xi)=\varphi(t,\xi)+\int_{T_0}^t\intsol K(t,\xi;s,\sigma)u(s,\sigma)d\sigma ds
\]
if and only if $u$ solves $\mathcal{L}_{t,\xi}u=d$ and $u(0,\xi)=u_0(\xi)$. As the kernel has a weak (integrable) singularity, this always has a unique solution \cite{Pogorzelskibook1966}, and hence must be the solution to the initial-value problem $\mathcal{L}_{t,\xi}u=d$, $u(0,\xi)=u_0(\xi)$. It is clear that $\varphi(t,\cdot)$ is even (odd) if $u_0$, $d$ are both even (odd), and so the fact that the solution inherits the symmetries of the forcing term and the initial data follows from the symmetry of the heat kernel and the coefficients $b$ and $c$ (symmetry of the kernel $K$, and by extension, symmetry of the resolvent). To prove the second statement of item (3), first notice that since $u_0$ and $d$ are both non-negative, so is $u$. Let us now set $v:=\partial_\xi u$ and observe that since $u(t,\cdot)$ is even and smooth, $v(t,\cdot)$ is odd and smooth, i.e., $v(t,0)=0$. It follows that $v$ solves
\begin{align*}
	&\partial_t v-4\nu\partial^2_\xi v-\partial_\xi[bv]-cv=u\partial_\xi c+\partial_\xi d,\quad \forall (t,\xi)\in(0,T]\times[0,\infty),\\
	&v(t,0)=0,\quad \forall t\in[0,T],\\
	&v(0,\xi)=u_0'(\xi),\quad \forall \xi\in[0,\infty).
\end{align*}
Since $u\geq0$, $\partial_\xi c\leq0$, $\partial_\xi d\leq0$, and $u_0'\leq0$ on $[0,T]\times[0,\infty)$, the minimum principle tells us that $v\leq0$ on $[0,T]\times[0,\infty)$. 

Item (4) is nothing but the classical Feynman-Kac formula, see for instance \cite{Kunitabook1997}. The reader can also readily verify the representation by (a very careful) direct differentiation, keeping in mind It\^o's Lemma and it's product rule: at the end of the day, loosely speaking this is nothing but the method of characteristics adapted to diffusion.
\end{proof}
\section{Constructing the modulus of continuity}\label{mainsec}
The main result in this section is the following.
\begin{thm}\label{thmbuildmod}
Let $T>0$, $\beta\in(0,1)$, $\mu_1>0$, $\nu>0$, and $C\geq0$ be arbitrary given constants, and let $g:[0,T]\rightarrow[0,\infty)$ be a given continuous function. For $0\leq s<t\leq T$, define
\begin{align}
&\Gamma(s,t):=\int_s^tg(r)dr,\label{defgamma2}\\
&G(s,t):=1+8\mu_1\nu^{\frac{\beta-2}{2}}e^{2\mu_1\Gamma(s,t)}\left[\nu^{1/2}\int_s^t(t-r)^{\frac{\beta-1}{2}}g(r)dr+\mu_1\Gamma(s,t)\int_s^t(t-r)^{\frac{\beta-2}{2}}g(r)dr\right].\label{defG2}
\end{align}
Suppose that $a:[0,T]\times[0,\infty)\rightarrow\rone$ is smooth, and $a(t,0)=0$ for every $t\in[0,T]$. Let $\Omega_0:[0,\infty)\rightarrow\mathbb{R}$ be a given smooth function that satisfies $\Omega_0(0)=0$. Then there exists an $\Omega\in C_t^1C_{\xi}^2((0,T]\times[0,\infty))$ with $\partial_\xi\Omega\in C([0,T]\times[0,\infty))$ such that $\Omega(t,\cdot)$ solves
\begin{equation}\label{c31}
	\partial_t\Omega(t,\xi)-4\nu\partial^2_\xi\Omega(t,\xi)-\mu_1g(t)\xi^{\beta}\partial_\xi\Omega(t,\xi)=Cg(t)\int_0^\xi\eta^{\beta-1}\partial_\eta\Omega(t,\eta)d\eta+a(t,\xi)
\end{equation}
on $(t,\xi)\in(0,T]\times(0,\infty)$, and satisfies the initial-boundary conditions
\begin{align}
	&\Omega(t,0)=0,\quad \forall t\in[0,T],\label{BCcondomega}\\
	&\Omega(0,\xi)=\Omega_0(\xi),\quad \forall \xi\in[0,\infty)\label{ICcondomega}.
\end{align}
Moreover if $\mu_1>C\beta^{-1}$, then we have the a-priori bound
\begin{equation}\label{maindbomega}
	\|\Omega(t,\cdot)\|_{L^{\infty}}\leq\|\partial_\xi\Omega(t,\cdot)\|_{L^{1}}\leq\left[\|\Omega_0'\|_{L^1}+\int_0^t\|\partial_\xi a(s,\cdot)\|_{L_1}ds\right]G^{\lambda-1}(0,t),\quad \lambda:=1+\frac{C}{\beta\mu_1}.
\end{equation}
If $C=0$, one has the stronger control
\begin{equation}\label{bddd}
	\|\partial_\xi\Omega(t,\cdot)\|_{L^{\infty}}\leq \left[\|\Omega_0'\|_{L^{\infty}}+\int_0^t\|\partial_\xi a(s,\cdot)\|_{L^{\infty}}ds\right]G(0,t).
\end{equation}
If we assume that $a(t,\cdot)$ is non-decreasing and concave (for every $t\in[0,T]$), and if $\|a(t,\cdot)\|_{L^{\infty}}\leq f(t)$, then bound \eqref{bddd} (assuming $C=0$) can be sharpened to read
\begin{equation}\label{bddd2}
	\|\partial_\xi\Omega(t,\cdot)\|_{L^{\infty}}\leq \|\Omega_0'\|_{L^{\infty}}G(0,t)+\int_0^t\left[\nu^{-\frac{1}{2}}(t-s)^{-\frac{1}{2}}+\mu_1\nu^{\frac{\beta-2}{2}}\int_s^t(t-r)^{\frac{\beta-2}{2}}g(r)dr\right]f(s)ds.
\end{equation}
Finally, if $\Omega_0$ and $a(t,\cdot)$ are both non-decreasing and concave (for every fixed $t$), then so is the solution to \eqref{c31} for any $C\geq0$.
%\begin{align}
%0\leq\partial_\xi\Omega(t,0)\leq &e^{C_{\beta}\Gamma(0,t)}\Omega_0'(0)G(0,t).\nonumber\\
%	&+e^{C_{\beta}\Gamma(0,t)}\int_0^tf(s)\left[(t-s)^{-\frac{1}{2}}+C_{\beta}\int_s^t(t-r)^{\frac{\beta-2}{2}}g(r)dr+G(s,t)\right]ds\label{maindbomega}
%\end{align}
\end{thm}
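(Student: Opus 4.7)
The plan is to work with $\mathcal{V}:=\partial_\xi\Omega$ after odd-extending $\Omega_0$ and $a(t,\cdot)$ about $\xi=0$. Differentiating \eqref{c31} in $\xi$ and using $\partial_\xi\!\int_0^\xi\eta^{\beta-1}\partial_\eta\Omega\,d\eta=\xi^{\beta-1}\mathcal{V}$, I obtain
\[
\partial_t\mathcal{V}-4\nu\partial^2_\xi\mathcal{V}-\mu_1 g(t)h(\xi)\partial_\xi\mathcal{V}-\mu_2 g(t)h'(\xi)\mathcal{V}=\partial_\xi a,\qquad \mu_2:=\mu_1+C/\beta,
\]
where $h$ is the odd extension of $\xi^\beta$. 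Because $h'(\xi)$ is singular at $\xi=0$, existence is obtained by first regularizing (replacing $h$ with a smooth odd approximant $h_\varepsilon$ derived from e.g.\ $(\xi^2+\varepsilon^2)^{\beta/2}$), solving the regularized problem via Theorem \ref{classicalthm}(2), and then passing to the limit via the $\varepsilon$-uniform a priori bounds described below coupled with interior parabolic compactness on every set $\{|\xi|\geq\delta\}$.

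The bound \eqref{maindbomega} is the heart of the argument and follows the scheme of \S\ref{heur3}. Writing $b:=\mu_1 g h_\varepsilon$ and $c:=\mu_2 g h_\varepsilon'$, the key observation is $c=\lambda\partial_\xi b$ with $\lambda:=\mu_2/\mu_1=1+C/(\beta\mu_1)$, which generalizes the identity \eqref{c25} to $\eta(t,\mathcal{A}_{t,\xi})=[\partial_\xi\mathcal{A}(t,\xi)]^\lambda$. Feynman-Kac (Theorem \ref{classicalthm}(4)) then represents $\mathcal{V}$ as $\mathbb{E}[\mathcal{B}^\lambda(t,\xi)\mathcal{V}_0(\mathcal{A}_{t,\xi})]$ plus a Duhamel term, with $\mathcal{B}:=\partial_\xi\mathcal{A}$. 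Integrating in $\xi$, a change of variables under $\Phi$ (whose Jacobian $\mathcal{B}\circ\Phi$ absorbs one power of $\mathcal{B}$) together with Fubini yields
\[
\|\mathcal{V}(t,\cdot)\|_{L^1}\leq\|\mathcal{V}_0\|_{L^1}\sup_\xi\mathbb{E}[\mathcal{B}^{\lambda-1}(t,\xi)]+\int_0^t\|\partial_\xi a(s,\cdot)\|_{L^1}\sup_\xi\mathbb{E}[\mathcal{B}^{\lambda-1}(t,\xi)]\,ds.
\]
The hypothesis $\mu_1>C/\beta$ forces $\lambda-1\in[0,1)$, so Jensen's inequality reduces matters to bounding $\bar{\mathcal{B}}:=\mathbb{E}[\mathcal{B}]$. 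Taking expectations in \eqref{c27n} kills the It\^o noise and $\bar{\mathcal{B}}$ becomes the deterministic solution of the divergence-form transport-diffusion equation with constant initial data $\bar{\mathcal{B}}(0,\cdot)\equiv 1$; this data satisfies (S), and since $h_\varepsilon$ is odd and concave on $[0,\infty)$, Theorem \ref{classicalthm}(3) propagates (S) to $\bar{\mathcal{B}}(t,\cdot)$, so that Theorem \ref{secondthm} delivers $\|\bar{\mathcal{B}}(t,\cdot)\|_{L^\infty}\leq G(0,t)$, yielding \eqref{maindbomega}.

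The remaining statements reduce to direct applications. When $C=0$ we have $\lambda=1$ and $\mathcal{V}$ itself solves \eqref{scndthmeq}; pairing Theorem \ref{secondthm} with Duhamel gives \eqref{bddd}. For \eqref{bddd2}, monotonicity and concavity of $a(t,\cdot)$ permit an integration by parts in $\sigma$ in the Duhamel formula, transferring the $\xi$-derivative from $a$ onto the fundamental solution; bounding $\|\partial_\sigma Z(t,0;s,\cdot)\|_{L^1}$ via an analogue of \eqref{c22} (the heat-kernel contribution now carries the factor $\nu^{-1/2}(t-s)^{-1/2}$) produces the stated expression in terms of $f$. Propagation of monotonicity and concavity of $\Omega$ is a two-step maximum-principle argument applied to $\mathcal{V}$ and then $\partial_\xi\mathcal{V}$: the contribution $\mu_2 g h'\mathcal{V}$ has a favorable sign, and the parity hypotheses on $h$, $a$, and $\Omega_0$ make Theorem \ref{classicalthm}(1),(3) directly applicable.

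The hardest part is controlling the singular coefficient $h'$ throughout. It invalidates any direct invocation of the classical theory and of Feynman-Kac, so the entire argument must be carried out on the regularized problem and passed to the limit. Fortunately the bound supplied by Theorem \ref{secondthm} depends on $h_\varepsilon$ only through the $\beta$-H\"older structure (which is $\varepsilon$-uniform) and on $\|g\|_{L^1}$, so the limit passage should be routine; the single delicate point is the change-of-variables step under the (degenerating) flow $\Phi_\varepsilon$, which must be handled via weak $L^1$ compactness rather than pointwise convergence of Jacobians.
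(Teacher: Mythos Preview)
Your proposal is correct and follows essentially the same route as the paper: odd-extend and differentiate to the $\mathcal{V}$-equation, regularize $h$, use Feynman--Kac with $\eta(t,\mathcal{A}_{t,\xi})=\mathcal{B}^\lambda$, change variables and apply Jensen on $\mathbb{E}[\mathcal{B}^{\lambda-1}]$, and control $\bar{\mathcal{B}}$ via the fundamental-solution estimate behind Theorem~\ref{secondthm}.

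Two minor points where you diverge from the paper. First, for \eqref{bddd2} the paper does not integrate by parts onto $\partial_\sigma Z$; instead, since $d=\partial_\xi a$ satisfies (S), the volume potential $\mathcal{V}_2$ is maximized at $\xi=0$, so $\|\mathcal{V}_2(t,\cdot)\|_{L^\infty}=\mathcal{V}_2(t,0)\le\int_0^t\|Z(t,0;s,\cdot)\|_{L^\infty}\|d(s,\cdot)\|_{L^1}\,ds$, and \eqref{supopbd} together with $\|d(s,\cdot)\|_{L^1}\le 2f(s)$ gives the claimed form directly. Your integration-by-parts route would require a separate bound on $\|\partial_\sigma Z(t,0;s,\cdot)\|_{L^1}$, which the paper never proves. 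Second, your worry about the change-of-variables step under the degenerating flow $\Phi_\varepsilon$ is misplaced: the paper carries out the entire Feynman--Kac argument at fixed $\varepsilon$ (where everything is smooth), obtains the $\varepsilon$-uniform bounds \eqref{linftybd}--\eqref{l1bd}, and then passes to the limit in $\Omega_\varepsilon$ itself via Arzel\`a--Ascoli, using additional (subcritical, $\varepsilon$-uniform) local H\"older bounds on $\mathcal{V}_\varepsilon$ obtained from weighted heat-kernel estimates. No weak-$L^1$ compactness of Jacobians is needed.
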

\begin{rmk}\label{rmkforce}
The forcing term we have in mind (for our applications) is one of the type $a(t,\xi)=f(t)\chi(\xi)$ where given $\alpha\in(0,1)$, $\chi(\xi)=\xi^{\alpha}$, for $\xi\in[0,1]$, and constant for $\xi\geq1$, extended oddly about zero. If necessary, one could smoothen out the singularity in its derivative by standard mollification and take advantage of the extra symmetries involved as described in the compactness argument (see \S\ref{pfthmbuildmod}, below). The point is that bounds \eqref{maindbomega} and \eqref{bddd2} are independent of any regularity assumptions on the derivative of $a$: they only depend on $\|a\|_{L_t^1L_x^{\infty}}$ when considering such forcing terms, since $\partial_\xi a(t,\cdot)$ satisfies (S) in Theorem \ref{secondthm}: in particular, $\partial_\xi a(t,\cdot)\geq0$ and hence $\|\partial_\xi a(t,\cdot)\|_{L^1}\leq \|a(t,\cdot)\|_{L^\infty}$. In fact, they are even independent of the exponent $\alpha\in(0,1)$. Estimate \eqref{bddd} on the other hand requires $\alpha=1$, but for our applications, we are only interested in symmetric forces, so we will always be using \eqref{bddd2} when needed.
\end{rmk}

\begin{rmk}
Theorem \ref{thmbuildmod} does not tell the full story: one can obtain bounds on $\|\partial_\xi\Omega(t,\cdot)\|_{L^{p}}$ in terms of powers of $G$ even when $C>0$ for certain values of $p$. See Appendix \ref{applpbds} for more details.
\end{rmk}
\begin{rmk}\label{symmrmk}
As explained below, we construct a solution to \eqref{c31} as an antiderivative of some $\cv$ that solves the differentiated equation. Thus, the fact that $\Omega(t,\cdot)$ is non-decreasing and concave follows from assuming $\Omega_0'$ and $\partial_\xi a(t,\cdot)$ satisfy (S) from Theorem \ref{secondthm}. It is also worth noting that even if $\Omega_0$ is \emph{convex} (rather than concave), then the solution \emph{will not preserve this property} (not unless we assume $\Omega_0$ is decreasing and hence negative), due to the concavity of $\xi^{\beta}$. We have no interest in negative solutions in this work, as $\Omega$ will be a modulus of continuity.
\end{rmk}
\begin{rmk}
	We also have uniform bounds on the first and second derivatives of $\Omega$ (up to the boundary). Those are in terms of sub-critical quantities (as $\|g\|_{L^p}$ with $p>2/(\beta+1)$). Those bounds are not used to control any norm of the solution to \eqref{NSE}, but are required to rigorously derive the a-priori bounds in Theorem \ref{mainthm}.
\end{rmk}

The first step is to get rid of the non-local term appearing in \eqref{c31}: let $h_0:\rone\rightarrow\rone$ ($a_0:[0,T]\times\rone\rightarrow\rone$) be the odd extension of $\xi^\beta$ ($a(t,\cdot)$) about $\xi=0$, and consider
\[
\Omega(t,\xi):=\int_0^{\xi}\cv(t,\eta)d\eta,\quad (t,\xi)\in[0,T]\times[0,\infty),
\]
where $\cv$ is an \emph{even} solution to
\begin{align}
&\partial_t\cv(t,\xi)-4\nu\partial^2_\xi\cv(t,\xi)-\mu_1g(t)\partial_\xi[h_0(\xi)\cv(t,\xi)]-\frac{C}{\beta}g(t)h_0'(\xi)\cv(t,\xi)=\partial_\xi a_0(t,\xi),\quad (t,\xi)\in(0,T]\times\rone,\label{c32}\\
&\cv(0,\xi)=\cv_0(\xi)=\Omega_0'(\xi)\nonumber.
\end{align}
To construct such a $\cv$, we approximate $h_0$ by a sequence of smooth, bounded, non-decreasing functions indexed by $\epsilon\in(0,1/80)$, $h_\epsilon$, and we obtain uniform in epsilon bounds on the sequence of approximating solutions $\cv_\epsilon$. It is also important for each $h_\epsilon$ to preserve the concavity of $h_0$ on $[0,\infty)$. To do so, for any given $\epsilon<1/80$, we define a new function
\[
h_{\epsilon}(\xi):=
\begin{cases}
	h_0(\xi),&|\xi|\leq\epsilon^{-1},\\
	s_0(\xi), &\epsilon^{-1}\leq|\xi|\leq 2\epsilon^{-1},\\
	2^\beta\epsilon^{-\beta}, &|\xi|\geq 2\epsilon^{-1}.\\
\end{cases}
\]
where $s_0$ is a smooth, bounded, non-decreasing function that preserves H\"older continuity of $h_0(\xi)$ and is concave (convex) when $\xi\geq0$ ($\xi<0$). Next, we let $\chi_{\epsilon}$ be a standard even mollifier at the level $\epsilon$, and we abuse notation by denoting the mollification of $h_{\epsilon}$ by $h_{\epsilon}$ as well. We seek to construct a sequence of solutions $\cv_{\epsilon}$ to \eqref{c32} (with $h_0$ being replaced by $h_\epsilon$), that satisfies \eqref{maindbomega} (and \eqref{bddd}-\eqref{bddd2} when $C=0$) uniformly in $\epsilon$, before passing to the zero $\epsilon$ limit in \S\ref{pfthmbuildmod}. Of course, the presence of a singularity in $h_0'$ at $\xi=0$ presents a major challenge in obtaining estimates, which really is the heart of the matter. In particular, the only uniform in $\epsilon$ bounds we are allowed to impose on $h_\epsilon$ (and $h_\epsilon'$) are: 
\begin{align}
&|h_\epsilon(\xi)|\leq |\xi|^{\beta},\quad \forall \xi\in\rone,\label{ptwsbdh}\\
&|h_\epsilon'(\xi)|=h_\epsilon'(\xi)\leq 2\beta|\xi|^{\beta-1},\quad \forall |\xi|\geq2\epsilon>0,\label{ptwsbdhprime}\\
&\int_{-\xi}^{\xi}|h_\epsilon'(\sigma)|d\sigma=\int_{-\xi}^{\xi}h_\epsilon'(\sigma)d\sigma\leq 2|\xi|^\beta,\quad \forall \xi\in\rone\label{ptwsbdhprime2}
\end{align}
Furthermore, we point out that $h_\epsilon$ inherits the concavity of $\xi^{\beta}$ on $[0,\infty)$, that is 
\begin{equation}\label{concavheps}
h_\epsilon''(\xi)\leq 0,\quad \forall \xi\geq0.	
\end{equation}
This can be proven by directly differentiating the mollified function and using the symmetric properties of the mollifier and $h_0$: the singularity in $h_0'$ at $\xi=0$ is integrable. That being said, the following is the main Proposition used in proving Theorem \ref{thmbuildmod}.
\begin{prop}\label{keyprop}
Let $\cv_0:\rone\rightarrow\rone$ be a given smooth function. Let $T>0$, $\mu_1\in(0,\infty)$, $\mu_2\in[0,\infty)$ be given constants, let $g:[0,T]\rightarrow[0,\infty)$ be continuous, and let $d:[0,T]\times\rone\rightarrow\rone$ be smooth. Then there exists a sequence of smooth functions indexed by $\epsilon>0$, $\{\cv_{\epsilon}\}_{\epsilon>0}$, such that $\cv_{\epsilon}(0,\xi)=\cv_0(\xi)$ and for which
\begin{equation}\label{c33}
	\partial_t\cv_\epsilon(t,\xi)-4\nu\partial^2_\xi\cv_{\epsilon}(t,\xi)-\mu_1g(t)h_{\epsilon}(\xi)\partial_\xi\cv_{\epsilon}(t,\xi)=\mu_2g(t)h_{\epsilon}'(\xi)\cv_{\epsilon}(t,\xi)+d(t,\xi),
\end{equation}
holds true for every $(t,\xi)\in(0,T]\times\rone$. Let us set $\lambda:=\mu_2/\mu_1$ and suppose that $\lambda\in[0,2]$. Then the following a-priori bounds hold true for every $t\in[0,T]$:
\begin{align}
&\|\cv_\epsilon(t,\cdot)\|_{L^{\infty}}\leq \left[\|\cv_0\|_{L^{\infty}}+\int_0^t\|d(s,\cdot)\|_{L^\infty}ds\right]G^{\lambda}(0,t),\quad \lambda\in[0,1],\label{linftybd}\\
&\|\cv_\epsilon(t,\cdot)\|_{L^1}\leq \left[\|\cv_0\|_{L^1}+\int_0^t\|d(s,\cdot)\|_{L^1}ds\right]G^{\lambda-1}(0,t),\quad \lambda\in[1,2],\label{l1bd}
\end{align}
where $G$ is as defined in \eqref{defG2}. If for every $t\in[0,T]$, $d(t,\cdot)$ satisfies (S) from Theorem \ref{secondthm}, then one can improve the bound \eqref{linftybd} when $\lambda=1$ to 
\begin{equation}\label{implinftybd}
	\|\cv_\epsilon(t,\cdot)\|_{L^{\infty}}\leq \|\cv_0\|_{L^{\infty}}G(0,t)+\int_0^t\nu^{-\frac{1}{2}}\left[(t-s)^{-\frac{1}{2}}+\mu_1\nu^{\frac{\beta-2}{2}}\int_s^t(t-r)^{\frac{\beta-2}{2}}g(r)dr\right]\|d(s,\cdot)\|_{L^1}ds.
\end{equation}
If both of $\cv_0$ and $d(t,\cdot)$ (for every $t\in[0,T]$) satisfy (S) from Theorem \ref{secondthm}, then so does $\cv_{\epsilon}(t,\cdot)$ (regardless of the value of $\lambda$). In particular, $\cv_{\epsilon}(t,\cdot)$ is even, non-negative, and non-increasing on $(0,\infty)$ (so that by symmetry, it is maximized at $\xi=0$ for any $t\in[0,T]$).
\end{prop}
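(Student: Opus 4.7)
The plan is to construct each $\cv_\epsilon$ by classical parabolic theory and then extract the norm bounds from the Feynman-Kac representation (Theorem \ref{classicalthm}, item 4), reducing everything to the ``base'' case $\lambda=1$ already controlled by Theorem \ref{secondthm}. For fixed $\epsilon>0$ the coefficient $h_\epsilon$ is smooth with bounded derivatives, so Theorem \ref{classicalthm} item (2) supplies a smooth fundamental solution and hence a smooth $\cv_\epsilon$ solving \eqref{c33} with initial data $\cv_0$. Preservation of (S) when both $\cv_0$ and $d(t,\cdot)$ satisfy (S) is a direct invocation of Theorem \ref{classicalthm} item (3): the drift $\mu_1 g h_\epsilon$ is odd, the lower-order coefficient $\mu_2 g h_\epsilon'$ is even, and $\partial_\xi(\mu_2 g h_\epsilon')=\mu_2 g h_\epsilon''\leq 0$ on $(0,\infty)$ by \eqref{concavheps}, which is exactly the sign needed.

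Next, let $\Phi_\epsilon$ be the stochastic flow with drift $b_\epsilon:=\mu_1 g h_\epsilon$ from Theorem \ref{sdiffexis}, and set $\ca_\epsilon:=\Phi_\epsilon^{-1}$, $\cb_\epsilon:=\partial_\xi\ca_\epsilon$. Because the lower-order coefficient in \eqref{c33} equals $\lambda\,\partial_\xi b_\epsilon$, the chain-rule computation \eqref{c25}, applied both from the initial time and analogously from any intermediate $s$, gives $\eta_\epsilon(t,\ca_{\epsilon,t,\xi})=\cb_\epsilon^\lambda(t,\xi)$ and the time-restarted version needed for the Duhamel term. Taking expectations in the SPDE \eqref{c27n} annihilates the It\^o contribution, so $\bar{\cb}_\epsilon:=\mathbb{E}[\cb_\epsilon]$ solves the deterministic PDE of Theorem \ref{secondthm} with initial data $\equiv 1$ (trivially (S)), yielding $\|\bar{\cb}_\epsilon(t,\cdot)\|_{L^\infty}\leq G(0,t)$, and by monotonicity of the integrals in $\Gamma, G$ the shifted analog $\leq G(s,t)\leq G(0,t)$ for the Jacobian of the flow restarted at time $s$.

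Both main bounds now follow by Jensen's inequality. For \eqref{linftybd} with $\lambda\in[0,1]$, concavity of $x\mapsto x^\lambda$ on $[0,\infty)$ gives $\mathbb{E}[\cb_\epsilon^\lambda(t,\xi)]\leq\bar{\cb}_\epsilon^\lambda(t,\xi)\leq G^\lambda(0,t)$, and plugging this into the Feynman-Kac formula (together with $L^\infty$ control on $\cv_0$ and $d$) produces \eqref{linftybd}. For \eqref{l1bd} with $\lambda\in[1,2]$, I Fubini-swap $\int d\xi$ with $\mathbb{E}$ and change variables $\xi=\Phi_\epsilon(t,\eta)$ inside the expectation; since $\partial_\eta\Phi_\epsilon(t,\eta)=\cb_\epsilon(t,\Phi_\epsilon(t,\eta))^{-1}$, one factor of $\cb_\epsilon$ is consumed by the Jacobian of the change of variables, leaving
\[
\|\cv_\epsilon(t,\cdot)\|_{L^1}\leq \intsol\mathbb{E}\bigl[\cb_\epsilon^{\lambda-1}(t,\Phi_\epsilon(t,\eta))\bigr]|\cv_0(\eta)|\,d\eta+(\text{Duhamel analog}).
\]
Jensen applied with exponent $\lambda-1\in[0,1]$ bounds the inner expectation by $\bar{\cb}_\epsilon^{\lambda-1}\leq G^{\lambda-1}(0,t)$, yielding \eqref{l1bd}.

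Finally, the sharpened bound \eqref{implinftybd} (for $\lambda=1$ with $d(t,\cdot)$ symmetric) comes from estimating the Duhamel term as $\|Z_\epsilon(t,\xi;s,\cdot)\|_{L^\infty_\sigma}\|d(s,\cdot)\|_{L^1}$ rather than $\|Z_\epsilon(t,\xi;s,\cdot)\|_{L^1_\sigma}\|d(s,\cdot)\|_{L^\infty}$. When $\lambda=1$ the operator $\cd^\epsilon_{t,\xi}$ is in divergence form, so its adjoint has no singular lower-order term, and Theorem \ref{classicalthm} item (2) yields the clean decomposition $Z_\epsilon=\Psi+\tilde Z_\epsilon$; the two summands on the right-hand side of \eqref{implinftybd} are, respectively, the $L^\infty_\sigma$ norm of the heat kernel $\Psi$ (scaling like $\nu^{-1/2}(t-s)^{-1/2}$) and a uniform-in-$\xi$ bound on the Volterra correction $\tilde Z_\epsilon$ produced by the same argument that underlies Theorem \ref{secondthm}. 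The main technical obstacle, throughout, is rigorously justifying the stochastic chain-rule identity $\eta_\epsilon(t,\ca)=\cb_\epsilon^\lambda(t,\cdot)$ (and its time-restarted version for the Duhamel term) in the presence of the It\^o noise in \eqref{c27n}, plus the Jensen-after-Fubini step which requires careful interchange of order of operations; beyond these bookkeeping points, all of the genuine analytic input is already concentrated in Theorem \ref{secondthm}.
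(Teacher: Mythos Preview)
Your proposal is essentially the paper's own argument: Feynman--Kac with the identity $\eta=\cb^\lambda$, Jensen on the concave map $x\mapsto x^{\lambda}$ (or $x^{\lambda-1}$), and reduction of $\bar\cb=\mathbb{E}[\cb]$ to the base case $\lambda=1$ already handled by Lemma~\ref{lemspcase}/Theorem~\ref{secondthm}. One correction on \eqref{implinftybd}: the bound on $\tilde Z_\epsilon(t,\xi;s,\cdot)$ from Theorem~\ref{FSest} is \emph{not} uniform in $\xi$ (for general $\xi$ you pick up a $|\xi|^\beta$ from $|h(\sigma)|\leq|\sigma-\xi|^\beta+|\xi|^\beta$); rather, \eqref{supopbd} is stated only at $\xi=0$, and the symmetry hypothesis on $d$ is precisely what lets you reduce to $\xi=0$ --- since $d(t,\cdot)$ satisfies (S), item (3) of Theorem~\ref{classicalthm} forces the volume potential $\cv_2$ to satisfy (S) as well, so $\|\cv_2(t,\cdot)\|_{L^\infty}=\cv_2(t,0)$, and only then does \eqref{supopbd} apply.
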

\begin{rmk}
Again, bounds \eqref{linftybd}-\eqref{implinftybd} aren't the only ones available. See Appendix \ref{applpbds} for more comprehensive estimates.	
\end{rmk}
\begin{rmk}
In line with Remark	\ref{symmrmk} (and assuming $d=0$ for simplicity), $\cv_\epsilon$ will satisfy (S) from Theorem $\ref{secondthm}$ provided initially it does. A typical example we have in mind is $\cv_0=e^{-\xi^2}$. On the other hand, if $\cv_0$ is increasing on $(0,\infty)$ and hence decreasing on $(-\infty,0)$, then this property cannot be preserved unless $\cv_0\leq0$ (so that the solution is as well). This is due to the (strict) concavity of $h_\epsilon$ on $(0,\infty)$.
\end{rmk}

\subsection{Enhanced Regularity: the special case $\lambda=1$ and symmetric, non-negative solutions}\label{secspcase}
In this section, we prove the following special case of Proposition \ref{keyprop}: the special case when $\lambda=1$ ($\mu_2=\mu_1$), $d=0$, and when the initial data satisfies (S) from Theorem \ref{secondthm}: for example, constant initial data. Moving on, we drop the subscript $\epsilon$ to avoid cumbersome notation, in which case \eqref{c33} (with $\mu_2=\mu_1$ and $d=0$) reduces to 
\begin{equation}\label{eqvspcase}
	\partial_t\cv(t,\xi)-4\nu\partial^2_\xi\cv(t,\xi)-\mu_1g(t)\partial_\xi[h(\xi)\cv(t,\xi)]=0,\quad \forall(t,\xi)\in(0,T]\times\rone.
\end{equation}

\begin{lem}\label{lemspcase}
	Let $\cv_0:\rone\rightarrow[0,\infty)$ and $T>0$ be given. Suppose that $\cv_0$ satisfies condition (S) in Theorem \ref{secondthm}. Let $g:[0,T]\rightarrow[0,\infty)$ be continuous, let $\mu_1\in[0,\infty)$ be a given constant, and let $G$ be as defined in \eqref{defG2}. Then there exists a unique solution to \eqref{eqvspcase} satisfying (S) from Theorem \ref{secondthm} such that $\cv(0,\xi)=\cv_0(\xi)$ and for which the following a-priori bound holds true for every $t\in[0,T]$:
\begin{equation}\label{splinftybd}
	\|\cv(t,\cdot)\|_{L^{\infty}}\leq \|\cv_0\|_{L^{\infty}}G(0,t).
\end{equation}
\end{lem}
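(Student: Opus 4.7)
My proof of Lemma \ref{lemspcase} proceeds in three steps. First, since the mollified coefficient $h=h_\epsilon$ is smooth with bounded derivatives and $g$ is continuous, the classical parabolic theory in Theorem \ref{classicalthm} (items (1)--(2)) applies to the operator
\[
\cd_{t,\xi}:=\partial_t-4\nu\partial^2_\xi-\mu_1 g(t)\partial_\xi[h(\xi)\,\cdot\,],
\]
supplying a non-negative fundamental solution $Z(t,\xi;s,\sigma)$ together with the representation $\cv(t,\xi)=\intsol Z(t,\xi;0,\sigma)\cv_0(\sigma)\,d\sigma$, which gives existence and smoothness. Uniqueness follows from the standard maximum principle applied to the difference of two bounded solutions, noting that $\mu_1 g h'_\epsilon$ is bounded for each fixed $\epsilon>0$.

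Second, I verify that condition (S) is propagated. Expanding the divergence-form drift, \eqref{eqvspcase} reads
\[
\partial_t\cv-4\nu\partial^2_\xi\cv-\mu_1 g(t)h(\xi)\partial_\xi\cv-\mu_1 g(t)h'(\xi)\cv=0,
\]
so $b(t,\xi):=\mu_1 g(t) h(\xi)$ is odd and $c(t,\xi):=\mu_1 g(t) h'(\xi)$ is even in $\xi$. The concavity of $h_\epsilon$ on $[0,\infty)$ recorded in \eqref{concavheps} translates into $\partial_\xi c=\mu_1 g h''\leq 0$ on $(0,\infty)$, which is precisely the hypothesis of item (3) of Theorem \ref{classicalthm}. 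Consequently $\cv(t,\cdot)$ inherits (S) from $\cv_0$ for every $t\in[0,T]$, and in particular its $L^\infty$ norm is attained at the origin, yielding
\[
\|\cv(t,\cdot)\|_{L^\infty}=\cv(t,0)=\intsol Z(t,0;0,\sigma)\cv_0(\sigma)\,d\sigma\leq \|\cv_0\|_{L^\infty}\,\|Z(t,0;0,\cdot)\|_{L^1(\sigma)}.
\]

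Third, and this is the heart of the argument, I would establish $\|Z(t,0;0,\cdot)\|_{L^1(\sigma)}\leq G(0,t)$. The key observation, already sketched in \S\ref{heur2}, is that for fixed $(t,\xi)=(t,0)$, $Z$ solves the adjoint equation $-\partial_s Z-4\nu\partial^2_\sigma Z+\mu_1 g(s)h(\sigma)\partial_\sigma Z=0$ in $(s,\sigma)$ with ``final'' data $Z(t,0;t,\sigma)=\delta(\sigma)$; crucially the destabilizing singular zeroth-order term $h'$ has vanished upon passing to the adjoint. Reversing time and writing the resulting forward drift-diffusion via Duhamel against the one-dimensional heat kernel $\Psi$, I would move $\partial_\sigma$ onto $\Psi$ by an integration by parts, producing the factor $\|\partial_\sigma\Psi(\nu\tau,\cdot)\|_{L^1}\lesssim(\nu\tau)^{-1/2}$; pairing this with the pointwise bound $|h(\sigma)|\leq|\sigma|^\beta$ from \eqref{ptwsbdh} generates the $(t-r)^{(\beta-1)/2}$ contribution in $G(0,t)$. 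Iterating the Duhamel formula once more, using \eqref{ptwsbdhprime}--\eqref{ptwsbdhprime2} to control the weighted norm that reappears, accounts for the subleading $(t-r)^{(\beta-2)/2}$ correction, and a Gr\"onwall closure produces the exponential factor $e^{2\mu_1\Gamma(0,t)}$, assembling into the definition \eqref{defG2} of $G$.

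The main obstacle is this third step: because the drift $h(\sigma)\sim|\sigma|^\beta$ is unbounded and $g$ is only assumed $L^1_t$ (a genuinely supercritical space), the Duhamel iteration must trade the polynomial growth of $h$ against the Gaussian decay of the heat semigroup very delicately, using only the bounds \eqref{ptwsbdh}--\eqref{ptwsbdhprime2} that are uniform in $\epsilon$. This $\epsilon$-uniformity is essential, since the bound must survive the compactness argument eventually sending $\epsilon\to 0^+$, in which $h_\epsilon'$ becomes singular at the origin precisely where $\cv(t,\cdot)$ concentrates its mass.
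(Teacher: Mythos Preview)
Your proposal follows the same overall route as the paper: existence via the fundamental solution, propagation of (S) via item (3) of Theorem \ref{classicalthm}, reduction to $\|Z(t,0;0,\cdot)\|_{L^1}\leq G(0,t)$, and exploitation of the adjoint equation's lack of a singular zeroth-order term. Steps 1 and 2 are correct and match the paper verbatim.

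Your sketch of step 3, however, is imprecise in a way that obscures the actual mechanism. The paper does not ``iterate Duhamel'': it runs a two-stage energy argument on $\cw(r,\sigma):=\tilde Z(t,0;r,\sigma)$, where $Z=\Psi+\tilde Z$ and $\tilde Z$ solves the backward transport-diffusion equation with forcing $-\mu_1 g(r)h(\sigma)\partial_\sigma\Psi(t-r,\sigma)$ and zero final data. First one derives an $L^\infty$ bound on $\cw$ by computing $\partial_r\|\cw(r,\cdot)\|_{L^p}^p$ and sending $p\to\infty$; the point is that after integration by parts the transport term contributes $-p^{-1}\int h'|\cw|^p$, whose bad constant $\|h'\|_{L^\infty}$ disappears in the limit. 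Pairing $|h(\sigma)|\leq|\sigma|^\beta$ with the Gaussian gives $\|\cw(t;s,\cdot)\|_{L^\infty}\lesssim\mu_1\nu^{(\beta-2)/2}\int_s^t(t-r)^{(\beta-2)/2}g(r)\,dr$; this is the source of the $(\beta-2)/2$ term, not a second Duhamel pass. Only then does one attack $\|\cw\|_{L^1}$: integrating the transport term by parts now produces $\int h'|\cw|$, which is split as $\int_{|\sigma|\leq 1}+\int_{|\sigma|>1}$ using \eqref{ptwsbdhprime}--\eqref{ptwsbdhprime2}. The near-origin piece is controlled by $\|h'\|_{L^1(-1,1)}\|\cw\|_{L^\infty}$ (feeding in the first-stage bound), the far piece by $2\beta\|\cw\|_{L^1}$ (closing via Gr\"onwall and producing $e^{2\mu_1\Gamma}$), while the $L^1$ norm of the forcing $h\partial_\sigma\Psi$ contributes the $(t-r)^{(\beta-1)/2}$ term. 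Your description reversed the roles of the two singularities and replaced the $L^\infty$-then-$L^1$ structure by a vague iteration; the former is essential because a direct $L^1$ estimate cannot close without already knowing $\|\cw\|_{L^\infty}$.
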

Before proving Lemma \ref{lemspcase}, let us start by explaining the strategy, which revolves around bounding the fundamental solution to the parabolic operator
\begin{equation}\label{parabop}
	\cd_{t,\xi}:=\partial_t-4\nu\partial^2_\xi-\mu_1g(t)\partial_\xi[h(\xi)\cdot], \quad (t,\xi)\in[0,T]\times\rone.
\end{equation}
The key idea used in obtaining bound \eqref{splinftybd} is exploiting the fact that the adjoint operator $\cd_{s,\sigma}^*$, given by 
\[
\cd_{s,\sigma}^*:=-\partial_s-4\nu\partial^2_\sigma+\mu_2g(s)h(\sigma)\partial_\sigma,\quad (s,\sigma)\in(0,t)\times\rone,
\]
has no singular lower order terms. This observation can be taken advantage of as follows: item 2 from Theorem \ref{classicalthm} tells us that the operator $\cd_{t,\xi}$ has a fundamental solution $Z(t,\xi;s,\sigma)$, where $0\leq s<t\leq T$ and $(\xi,\sigma)\in\rone\times\rone$, such that for any smooth enough $\cv_0$, if we define 
\[
\cv(t,\xi):=\intsol Z(t,\xi;s,\sigma)\cv_0(\sigma)d\sigma,\quad (t,\xi)\in(s,T]\times\rone,
\]
then $\cv$ solves \eqref{eqvspcase} with initial data $\cv(s,\xi)=\cv_0$. Since we assumed that the initial data satisfies (S) from Theorem \ref{secondthm}, the classical parabolic maximum/minimum principle (item 3 from Theorem \ref{classicalthm}) tells us that the solution, $\cv(t,\cdot)$, does as well for every $t\in[0,T]$. Indeed, $h$ is a non-decreasing, odd (about $\xi=0$) function that is concave on $[0,\infty)$. In particular, we have 
\[
\|\cv(t,\cdot)\|_{L^{\infty}}=\cv(t,0)\leq \|\cv_0\|_{L^{\infty}}\intsol Z(t,0;0,\sigma)d\sigma,
\]
and so we are interested in obtaining bounds on $\|Z(t,0;s,\cdot)\|_{L^1}$. This is where the adjoint operator comes into play: for every fixed $(t,\xi)$, the fundamental solution satisfies the adjoint equation (backwards in time), i.e.
\begin{equation*}
	-\partial_rZ(t,\xi;r,\sigma)-4\nu\partial^2_\sigma Z(t,\xi;r,\sigma)+\mu_1g(r)h(\sigma)\partial_\sigma Z(t,\xi;r,\sigma)=0,\quad (r,\sigma)\in(s,t)\times(-\infty,\infty).
\end{equation*}
Notice (again) that the singular term $h'$ disappears and we have nothing but a transport-diffusion equation: theoretically, it should have a maximum principle. However, at the initial (backwards) time $r=t$, the fundamental solution is a delta function (a distribution), so it does not make sense to talk about maximum principles. Nevertheless, this singularity comes from the heat kernel:
\begin{equation}\label{hk}
	\Psi(r,y):=\frac{1}{\sqrt{16\pi\nu r}}\exp\left(\frac{-y^2}{16\nu r}\right),\quad (r,y)\in(0,\infty)\times\rone,
\end{equation}
which we can bound in any norm independent of the drift. As the fundamental solution takes the form $Z(t,\xi;s,\sigma)=\Psi(t-s,\xi-\sigma)+\tilde{Z}(t,\xi;s,\sigma)$, we only need to control $\tilde{Z}$, which is a volume potential with density $Q$: 
\begin{equation}\label{FSdensity}
	\tilde{Z}(t,\xi;s,\sigma)=\int_s^t\intsol \Psi(t-r,\xi-\mu)Q(r,\mu;s,\sigma)d\mu dr,\quad |Q(r,\mu;s,\sigma)|\leq \frac{c_B}{r-s} \exp\left(-\frac{c_B|\mu-\sigma|^2}{r-s}\right).
\end{equation}
The constant $c_B$ in \eqref{FSdensity} depends on quantities like $\|g\|_{L^{\infty}}$ and $\|h'\|_{L^{\infty}}$ meaning we cannot use it to get useful bounds on $\tilde{Z}$. However, one can use \eqref{FSdensity} to show that $\|\tilde{Z}(t,\xi;t^-\cdot)\|_{L^p}=0$. From $\cd_{s,\sigma}^*Z=0$, we see that $\tilde{Z}$ solves a forced transport-diffusion equation with homogenous data and non-homogenous forcing:
\begin{align}
	&-\partial_r\tilde{Z}(t,\xi;r,\sigma)-4\nu\partial^2_\sigma \tilde{Z
}(t,\xi;r,\sigma)+\mu_1g(r)h(\sigma)\partial_\sigma \tilde{Z}(t,\xi;r,\sigma)=-\mu_1g(r)h(\sigma)\partial_\sigma\Psi(t-r,\xi-\sigma),\nonumber\\
	&\lim_{r\rightarrow t^-}\intsol |\tilde{Z}(t,\xi;r,\sigma)|^pd\sigma=0,\quad \forall p\in[1,\infty),\ \xi\in\rone.\label{ncpf3.2}
\end{align}
Evaluating at $\xi=0$, we observe that H\"older continuity of $h$ \eqref{ptwsbdh} allows us to reduce the intensity of the singularity in $\|\partial_\sigma\Psi(t-r,\cdot)\|_{L^{\infty}}$ (at $t=r$), leading to a meaningful $L^{\infty}$ bound:
\[
\|\tilde{Z}(t,0;s,\cdot)\|_{L^\infty}\leq \mu_1\nu^{\frac{\beta-2}{2}}\int_s^t(t-r)^{\frac{\beta-2}{2}}g(r)dr,
\]
which can then be bootstrapped to control $\|\tilde{Z}(t,0;s,\cdot)\|_{L^1}$ courtesy of the fact that $h'$ is bounded uniformly away from zero \eqref{ptwsbdhprime}, while the singularity in $h'$ at $\xi=0$ is integrable, \eqref{ptwsbdhprime2}. All this is made rigorous in the following Theorem.
\begin{thm}\label{FSest}
Let $\Gamma$, $G$ be as in \eqref{defgamma2}-\eqref{defG2}, and let $Z$ be the fundamental solution to the operator $\cd_{t,\xi}$ \eqref{parabop}. Then the following bounds hold true for every $0\leq s<t\leq T$:
\begin{align}
&\|Z(t,0;s,\cdot)\|_{L^{\infty}}\leq \nu^{-\frac{1}{2}}(t-s)^{-\frac{1}{2}}+\mu_1\nu^{\frac{\beta-2}{2}}\int_s^t(t-r)^{\frac{\beta-2}{2}}g(r)dr,\label{supopbd}\\
&\intsol Z(t,0;s,\sigma)d\sigma\leq G(s,t)\label{goodopnorm},\\
&\intsol Z(t,0;s,\sigma)d\sigma\leq\left[1+2\mu_1(1-\beta)^{\beta}\int_s^tg(r)\|Z(t,0;r,\cdot)\|_{L^{\infty}}^{1-\beta}dr\right]^{\frac{1}{1-\beta}}\label{scaledopnorm}.
\end{align}
\end{thm}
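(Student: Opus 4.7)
The three bounds share one strategy: decompose $Z(t,\xi;s,\sigma)=\Psi(t-s,\xi-\sigma)+\tilde Z(t,\xi;s,\sigma)$ using item (2) of Theorem \ref{classicalthm}. Since $Z\geq 0$ (item (1)), the heat kernel $\Psi$ contributes $\nu^{-1/2}(t-s)^{-1/2}$ (up to an absolute constant) to $\|Z(t,0;s,\cdot)\|_{L^\infty}$ and exactly $1$ to $\|Z(t,0;s,\cdot)\|_{L^1}$, so the core task is to estimate $\tilde Z(t,0;s,\cdot)$.

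\textbf{For \eqref{supopbd}.} From $\cd^*_{s,\sigma}Z=0$ and a direct computation of $\cd^*_{s,\sigma}\Psi$, the remainder $\tilde Z$ solves the forced backward parabolic equation displayed just above \eqref{ncpf3.2}, with source $-\mu_1 g(r)h(\sigma)\partial_\sigma\Psi(t-r,\xi-\sigma)$ and homogeneous terminal data. Freezing $\xi=0$ and changing time variable to $\tau=t-s$ converts this into a forward non-divergence-form transport-diffusion in $(\tau,\sigma)$ with smooth bounded drift (at the regularized level) and no zeroth-order term, so constants solve the homogeneous equation and the classical $L^\infty$ maximum principle gives
\[
\|\tilde Z(t,0;s,\cdot)\|_{L^\infty}\leq \int_s^t\mu_1 g(r)\|h(\cdot)\partial_\cdot\Psi(t-r,-\cdot)\|_{L^\infty}dr.
\]
The H\"older bound \eqref{ptwsbdh} together with the explicit Gaussian form of $\partial_\sigma\Psi$ yields the supremum $\|h\cdot\partial\Psi(t-r,-\cdot)\|_{L^\infty}\lesssim \nu^{(\beta-2)/2}(t-r)^{(\beta-2)/2}$ (by maximizing $|\sigma|^{1+\beta}e^{-\sigma^2/(16\nu(t-r))}/(t-r)^{3/2}$ in $\sigma$), and adding $\|\Psi\|_{L^\infty}$ produces \eqref{supopbd}.

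\textbf{For \eqref{goodopnorm}--\eqref{scaledopnorm}.} Set $\phi(s):=\intsol Z(t,0;s,\sigma)d\sigma$. The distributional terminal condition gives $\phi(t^-)=1$ (via $\int\Psi d\sigma=1$ and the $L^p$-vanishing \eqref{ncpf3.2} of $\tilde Z$). Differentiating in $s$, using $\cd^*_{s,\sigma}Z=0$, and integrating by parts in $\sigma$ (boundary terms vanish by the Aronson-type Gaussian decay from \eqref{FSdensity}) produces the backward ODE
\[
-\partial_s\phi(s)=\mu_1 g(s)\intsol h'(\sigma)Z(t,0;s,\sigma)d\sigma\geq 0.
\]
The crux is estimating the right-hand side. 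Splitting the integral at $|\sigma|=R$, using \eqref{ptwsbdhprime2} on $|\sigma|\leq R$ (singular-but-integrable) paired with $\|Z(t,0;s,\cdot)\|_{L^\infty}$, and \eqref{ptwsbdhprime} on $|\sigma|>R$ (pointwise decay) paired with $\phi(s)$, yields the interpolation inequality
\[
\intsol h'(\sigma)Z(t,0;s,\sigma)d\sigma\leq 2R^\beta\|Z(t,0;s,\cdot)\|_{L^\infty}+2\beta R^{\beta-1}\phi(s),\qquad R\geq 2\epsilon.
\]
For \eqref{scaledopnorm}, the AM--GM optimal choice $R=(1-\beta)\phi/\|Z\|_{L^\infty}$ collapses the right-hand side to $2(1-\beta)^{\beta-1}\|Z\|_{L^\infty}^{1-\beta}\phi^\beta$, leading to the clean ODE $-\partial_s(\phi^{1-\beta})\leq 2\mu_1(1-\beta)^\beta g(s)\|Z(t,0;s,\cdot)\|_{L^\infty}^{1-\beta}$, and direct integration over $[s,t]$ (using $\phi(t^-)^{1-\beta}=1$) produces the claimed bound. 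For \eqref{goodopnorm}, a non-optimized (e.g.\ constant) choice of $R$ produces a linear differential inequality of the form $-\partial_s\phi\leq c_1\mu_1 g\|Z\|_{L^\infty}+c_2\mu_1 g\phi$; a backward Gronwall argument (introducing the $e^{2\mu_1\Gamma(s,t)}$ factor), followed by substitution of \eqref{supopbd} and a Fubini step to rewrite $\int_s^t g(r)\int_r^t(t-r')^{(\beta-2)/2}g(r')dr'dr\leq\Gamma(s,t)\int_s^t(t-r')^{(\beta-2)/2}g(r')dr'$, collects the remaining terms into the stated form of $G$.

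\textbf{Main obstacle.} The genuinely hard part is the interpolation inequality above; everything else is a careful but essentially routine integration-by-parts plus Gronwall. The inequality trades the mass-near-origin part of $h'$ (where $Z$ concentrates and only $\|Z\|_{L^\infty}$ is available) against the pointwise-bounded tail (where $\phi$ controls the integral), and it is this trade, available precisely because $\beta\in(0,1)$ permits both $R^\beta$ and $R^{\beta-1}$ to play off one another, that produces supercritical-in-$g$ control on $\phi$. The other subtlety, the distributional terminal data, is absorbed by the $\Psi+\tilde Z$ split and the $L^p$-vanishing of $\tilde Z$ in \eqref{ncpf3.2}.
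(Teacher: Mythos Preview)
Your strategy matches the paper's, and your arguments for \eqref{supopbd} and \eqref{scaledopnorm} are correct; in fact your direct maximum-principle argument for \eqref{supopbd} is cleaner than the paper's route, which runs $L^p$ energy estimates on $\tilde Z$ and sends $p\to\infty$ (picking up and then killing a spurious $\|h'\|_{L^\infty}$ factor along the way). Your optimization for \eqref{scaledopnorm} is the paper's exactly.

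There is one quantitative slip in your sketch for \eqref{goodopnorm}. Working directly with $\phi(s)=\int Z(t,0;s,\sigma)\,d\sigma$ and then substituting \eqref{supopbd} for $\|Z(t,0;r,\cdot)\|_{L^\infty}$ feeds the full heat-kernel singularity $\nu^{-1/2}(t-r)^{-1/2}$ into the Gronwall step, so your first term comes out as $\nu^{-1/2}\int_s^t g(r)(t-r)^{-1/2}dr$ rather than the sharper $\nu^{(\beta-1)/2}\int_s^t g(r)(t-r)^{(\beta-1)/2}dr$ that appears in the stated $G$. The paper avoids this by running the $L^1$ estimate on $\tilde Z$ rather than $Z$: the forcing in the backward equation for $\tilde Z$ is $-\mu_1 g(r)h(\sigma)\partial_\sigma\Psi(t-r,\sigma)$, whose $L^1_\sigma$ norm is of order $(t-r)^{(\beta-1)/2}$ courtesy of $|h(\sigma)|\leq|\sigma|^\beta$, and only $\|\tilde Z\|_{L^\infty}$ (which lacks the heat-kernel spike) enters the Gronwall term. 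You can repair your argument in place by splitting $\int h'Z=\int h'\Psi+\int h'\tilde Z$, computing the first piece directly via $\int|\sigma|^{\beta-1}\Psi(t-r,\sigma)\,d\sigma\sim(\nu(t-r))^{(\beta-1)/2}$, and applying your $R$-split only to the second; this recovers the exact form of $G$.
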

\begin{rmk}
One can of course plug in \eqref{supopbd} into \eqref{scaledopnorm} to end up with an estimate depending only on $g$, $\beta$, $\nu$, and $\mu_1$. Bound \eqref{scaledopnorm} is sharper than \eqref{goodopnorm}: for instance, it does not grow exponentially in $\|g\|_{L^1}$. However, bound \eqref{goodopnorm} is much easier to manipulate. 
\end{rmk}

\begin{proof}
Recall that the fundamental solution takes the form $Z=\Psi+\tilde{Z}$, and so  for any fixed $(t,\xi)$, $\tilde{Z}$ satisfies \eqref{ncpf3.2} backwards in time. It follows that if $\cw(t;r,\sigma):=\tilde{Z}(t,0;r,\sigma)$, then
\[
-\partial_{r}\cw(t;r,\sigma)-4\nu\partial^2_\sigma\cw(t;r,\sigma)+\mu_1g(r)h(\sigma)\partial_\sigma \cw(t;r,\sigma)=-\mu_1g(r)h(\sigma)\partial_\sigma\Psi(t-r,\sigma),\quad \forall (r,\sigma)\in[s,t)\times\rone,
\]
and using \eqref{FSdensity}, it is straightforward to verify
\begin{equation}\label{ICFS}
	\lim_{r\rightarrow t^{-}}\intsol|\cw(t;r,\sigma)|^pd\sigma=0,\quad \forall p\in[1,\infty).
\end{equation}
To get \eqref{supopbd}, we get $L^p$ bounds on $\cw$ then send $p$ to infinity. For $p\geq2$, set $f_p(r):=\|\cw(t;r,\cdot)\|^p_{L^p}$, and notice that 
\begin{align*}
f_p'(r)=&-4p\nu\intsol|\cw(t;r,\sigma)|^{p-2}\cw(t;r,\sigma)\partial^2_\sigma\cw(t;r,\sigma)d\sigma \nonumber\\
&+\mu_1pg(r)\intsol h(\sigma)\partial_\sigma\cw(t;r,\sigma) |\cw(t;r,\sigma)|^{p-2}\cw(t;r,\sigma)d\sigma\nonumber\\
&+\mu_1pg(r)\intsol|\cw(t;r,\sigma)|^{p-2}\cw(t;r,\sigma)h(\sigma)\partial_\sigma\Psi(t-r,\sigma)d\sigma.\label{c35}
\end{align*}
Integration by parts yields:
\begin{equation*}\label{c36}
-\intsol|\cw(t;r,\sigma)|^{p-2}\cw(t;r,\sigma)\partial^2_\sigma\cw(t;r,\sigma)d\sigma=(p-1)\intsol|\cw(t;r,\sigma)|^{p-2}(\partial_\sigma\cw(t;r,\sigma))^2d\sigma\geq0,
\end{equation*}
and since $\partial_\sigma\cw(t;r,\sigma) |\cw(t;r,\sigma)|^{p-2}\cw(t;r,\sigma)=p^{-1}\partial_{\sigma}\left[|\cw(t;r,\sigma)|^{p}\right]$, we get
\begin{equation*}\label{c37}
	p\intsol h(\sigma)\partial_\sigma\cw(t;r,\sigma) |\cw(t;r,\sigma)|^{p-2}\cw(t;r,\sigma)d\sigma=-\intsol h'(\sigma)|\cw(t;r,\sigma)|^{p}d\sigma\geq -\|h'\|_{L^{\infty}}f_p(r),
\end{equation*}
whence the lower bound 
\begin{equation}\label{c35b}
	f_p'(r)\geq -\mu_1g(r)\|h'\|_{L^{\infty}}f_p(r)+\mu_1pg(r)\intsol|\cw(t;r,\sigma)|^{p-2}\cw(t;r,\sigma)h(\sigma)\partial_\sigma\Psi(t-r,\sigma)d\sigma,
\end{equation}
since $\mu_1$ and $g$ are both non-negative. Recall that $h(\sigma)=h_{\epsilon}(\sigma)$ is a mollification of $\sigma^{\beta}$, so that $\|h_\epsilon'\|_{L^{\infty}}$ is a bad constant. As will be shown below, upon letting $p\rightarrow\infty$ the bad constant $\|h'\|_{L^{\infty}}$ will dissapear (as it should, since we are obtaining an $L^\infty$ bound for a solution to a transport-diffusion equation). To control the remaining integral, we first employ H\"older's (integral) inequality together with $|h(\sigma)|\leq|\sigma|^{\beta}$ from \eqref{ptwsbdh}:
\begin{equation}\label{c38}
	p\intsol|\cw(t;r,\sigma)|^{p-2}\cw(t;r,\sigma)h(\sigma)\partial_\sigma\Psi(t-r,\sigma)d\sigma\geq-p f_p^{\frac{p-1}{p}}(r)\left[\intsol |\sigma|^{p\beta}|\partial_\sigma\Psi(t-r,\sigma)|^pd\sigma\right]^{1/p}.
\end{equation}
The bound
\[
|\sigma|^{\beta}|\partial_\sigma\Psi(t-r,\sigma)|=\frac{|\sigma|^{\beta+1}}{8\nu (t-r)}\Psi(t-r,\sigma)\leq \sqrt{2^{5\beta-1}\nu^{\beta-1}}(t-r)^{\frac{\beta-1}{2}}\Psi(t-r,\sigma/\sqrt{2}),
\]
where we used the inequality $|\sigma|^{\gamma}\leq a^{\gamma/2}\exp(\sigma^2/a)$ valid for $\gamma,a>0$, allows us to deduce 
\begin{equation}\label{c39}
	\left[\intsol |\sigma|^{p\beta}|\partial_\sigma\Psi(t-r,\sigma)|^pd\sigma\right]^{1/p}\leq \sqrt{2^{5\beta-1}\nu^{\beta-1}}(t-r)^{\frac{\beta-1}{2}}2^{\frac{1}{2p}}\|\Psi(t-r,\cdot)\|_{L^p}.
\end{equation}
Plugging \eqref{c39} into \eqref{c38} yields
\begin{equation}\label{c310}
	p\intsol|\cw(t;r,\sigma)|^{p-2}\cw(t;r,\sigma)h(\sigma)\partial_\sigma\Psi(t-r,\sigma)d\sigma\geq -p2^{\frac{1}{2p}}A_{\beta,\nu}(t-r)^{\frac{\beta-1}{2}}f_p^{\frac{p-1}{p}}(r)\|\Psi(t-r,\cdot)\|_{L^p},
\end{equation}
where $A_{\beta,\nu}:=\sqrt{2^{5\beta-1}\nu^{\beta-1}}$. If we define $F_p(r):=\exp\left(-\mu_1\|h'\|_{L^{\infty}}\Gamma(r,t)\right)f_p(r)$ (where $\Gamma$ is as defined in \eqref{defgamma2}) and recall once again that $\mu_1$ and $g$ are both non-negative, we may multiply \eqref{c310} by $\mu_1 g(r)$ and use the resultant inequality to bound the integral on the right-hand side of \eqref{c35b} from below, yielding the inequality
\[
F_p'(r)\geq-\mu_1p2^{\frac{1}{2p}}A_{\beta,\nu}(t-r)^{\frac{\beta-1}{2}}g(r)F_p^{\frac{p-1}{p}}(r)\|\Psi(t-r,\cdot)\|_{L^p}.
\]
Integrating the above inequality from $r=s$ to $r=t$, and using \eqref{ICFS} we get 
\[
f^{1/p}_p(s)\leq \mu_12^{\frac{1}{2p}}A_{\beta,\nu}\int_s^t(t-r)^{\frac{\beta-1}{2}}g(r)\|\Psi(t-r,\cdot)\|_{L^p}dr\exp\left[\frac{\|h'\|_{L^{\infty}}}{p}\Gamma(s,t)\right].
\]
Sending $p\rightarrow\infty$ and noting that $\|\Psi(t-r,\cdot)\|_{L^{\infty}}\leq2^{-2}\nu^{-1/2}(t-r)^{-1/2}$ we get
\begin{equation}\label{c311}
	\|\cw(t;s,\cdot)\|_{L^{\infty}}=\|\tilde{Z}(t,0;s,\cdot)\|_{L^{\infty}}\leq \sqrt{2^{5(\beta-1)}}\mu_1\nu^{\frac{\beta-2}{2}}\int_s^t(t-r)^{\frac{\beta-2}{2}}g(r)dr,
\end{equation}
from which \eqref{supopbd} follows. Next, for $\epsilon>0$, let $\chi_{\epsilon}$ be a smooth, convex function such that $\chi_{\epsilon}(u)\rightarrow|u|$ uniformly on compact sets, and set 
\[
f_\epsilon(r):=\intsol \chi_{\epsilon}(\cw(t;r,\sigma))d\sigma,
\]
from which we obtain
\begin{align}
f_\epsilon'(r)=&-4\nu\intsol\chi_{\epsilon}'(\cw(t;r,\sigma))\partial^2_\sigma \cw(t;r,\sigma)d\sigma+\mu_1g(r)\intsol h(\sigma)\partial_\sigma\cw(t;r,\sigma) \chi_{\epsilon}'(\cw(t;r,\sigma))d\sigma\nonumber\\
&+\mu_1g(r)\intsol h(\sigma)\chi_{\epsilon}'(\cw(t;r,\sigma))\partial_\sigma\Psi(t-r,\sigma)d\sigma.\label{c312}
\end{align}
It is clear that $\chi_\epsilon''\geq0$, $0\leq\chi_{\epsilon}(u)\leq|u|$, and $\partial_\sigma[\chi_\epsilon(\cw)]=\chi_\epsilon'(\cw)\partial_\sigma\cw$, so that upon integration by parts and using $\|h'\|_{L^1(-1,1)}\leq 2$ together with $h'(\sigma)\leq 2\beta$ whenever $|\sigma|\geq 1$ (bounds \eqref{ptwsbdhprime} and \eqref{ptwsbdhprime2}) we arrive at
\begin{align}
&-4\nu\intsol\chi_{\epsilon}'(\cw(t;r,\sigma))\partial^2_\sigma \cw(t;r,\sigma)d\sigma+\mu_1g(r)\intsol h(\sigma)\partial_\sigma\cw(t;r,\sigma) \chi_{\epsilon}'(\cw(t;r,\sigma))d\sigma\nonumber\\
&\geq-\mu_1g(r)\intsol h'(\sigma)\chi_{\epsilon}(\cw(t;r,\sigma))d\sigma\geq -2\mu_1g(r)\|\cw(t;r,\cdot)\|_{L^{\infty}}-2\mu_1g(r)f_{\epsilon}(r).\label{c313}
\end{align}
On the other hand, using $|\chi_\epsilon'|\leq 1$ and \eqref{c39} with $p=1$ we get 
\begin{equation}\label{c314}
\mu_1g(r)\intsol h(\sigma)\chi_{\epsilon}'(\cw(t;r,\sigma))\partial_\sigma\Psi(t-r,\sigma)d\sigma\geq -\sqrt{2^{5\beta}}\mu_1\nu^{\frac{\beta-1}{2}}g(r)(t-r)^{\frac{\beta-1}{2}}.
\end{equation}
Bounding \eqref{c312} from below by \eqref{c313} and \eqref{c314} renders:
\[
f_{\epsilon}'(r)+2\mu_1g(r)f_{\epsilon}(r)\geq-8\mu_1g(r)\left[\|\cw(t;r,\cdot)\|_{L^{\infty}}+\nu^{\frac{\beta-1}{2}}(t-r)^{\frac{\beta-1}{2}}\right]
\]
Integrating this from $r=s$ to $r=t$ and using \eqref{ICFS} once again we get
\begin{equation}\label{c315}
f_\epsilon(s)\leq 8\mu_1e^{2\mu_1\Gamma(s,t)}\int_s^tg(r)\left[\|\cw(t;r,\cdot)\|_{L^{\infty}}+\nu^{\frac{\beta-1}{2}}(t-r)^{\frac{\beta-1}{2}}\right]dr. 
\end{equation}
Since $s\leq r<t$, \eqref{c311} gives us
\[
\|\cw(t;r,\cdot)\|_{L^{\infty}}\leq \mu_1\nu^{\frac{\beta-2}{2}}\int_r^t(t-r')^{\frac{\beta-2}{2}}g(r')dr'\leq \mu_1\nu^{\frac{\beta-2}{2}}\int_s^t(t-r')^{\frac{\beta-2}{2}}g(r')dr'.
\]
Plugging this into \eqref{c315}, sending $\epsilon\rightarrow0^+$ and noting that $\|\Psi(t-s,\cdot)\|_{L^1}=1$ we conclude with
\[
\intsol Z(t,0;s,\sigma)\leq 1+8\mu_1\nu^{\frac{\beta-2}{2}}\left[\mu_1\Gamma(s,t)\int_s^t(t-r)^{\frac{\beta-2}{2}}g(r)dr+\nu^{1/2}\int_s^t(t-r)^{\frac{\beta-1}{2}}g(r)dr\right]e^{2\mu_1\Gamma(s,t)},
\]
for every $0\leq s<t\leq T$, which is precisely \eqref{goodopnorm}.

To get \eqref{scaledopnorm}, we let $f$ be the $L^1$ norm of the full fundamental solution: $f(r):=\|Z(t,0;r,\cdot)\|_{L^1}$, $s\leq r< t$, and note that since $Z\geq0$, we may integrate \eqref{ncpf3.2} in $\sigma$ and evaluate at $\xi=0$ to get that $f'$ satisfies
\[
f'(r)=\mu_1g(r)\intsol h(\sigma)\partial_\sigma Z(t,0;r,\sigma)d\sigma=-\mu_1g(r)\intsol h'(\sigma)Z(t,0;r,\sigma)d\sigma.
\]
For $\rho>0$, we split the integral
\[
\intsol h'(\sigma)Z(t,0;r,\sigma)d\sigma=\int_{|\sigma|\leq\rho}h'(\sigma)Z(t,0;r,\sigma)d\sigma+\int_{|\sigma|\geq\rho}h'(\sigma)Z(t,0;r,\sigma)d\sigma.
\]
For the first integral, we bound it from above by utilizing \eqref{ptwsbdhprime2}
\[
\int_{|\sigma|\leq\rho}h'(\sigma)Z(t,0;r,\sigma)d\sigma\leq2 \|Z(t,0;r,\cdot)\|_{L^{\infty}}|h(\rho)|\leq 2\|Z(t,0;r,\cdot)\|_{L^{\infty}}\rho^{\beta},
\]
while the second one is controlled by employing \eqref{ptwsbdhprime}
\[
\int_{|\sigma|\leq\rho}h'(\sigma)Z(t,0;r,\sigma)d\sigma\leq2\beta f(r)\rho^{\beta-1},
\]
whence
\[
0\leq\intsol h'(\sigma)Z(t,0;r,\sigma)d\sigma\leq2\|Z(t,0;r,\cdot)\|_{L^{\infty}}\rho^{\beta}+2\beta f(r)\rho^{-(1-\beta)}.
\]
The function $R(\rho):=a\rho^{r_0}+b\rho^{-r_1}$, where $a,b, r_0, r_1>0$ is optimized when $\rho=\rho_*$, with
\[
\rho_*=\left(\frac{br_1}{ar_0}\right)^{\frac{1}{r_0+r_1}},\quad R(\rho_*)=\left(\frac{r_0+r_1}{r_1}\right)\left(\frac{r_1}{r_0}\right)^{\frac{r_0}{r_0+r_1}}\left(a^{r_1}b^{r_0}\right)^{\frac{1}{r_0+r_1}},
\]
leading to
\[
0\leq\intsol h'(\sigma)Z(t,0;r,\sigma)d\sigma\leq C_{\beta}\|Z(t,0;r,\cdot)\|_{L^{\infty}}^{1-\beta}f^{\beta}(r),\quad C_{\beta}=2(1-\beta)^{\beta-1}.
\]
As $\mu_1$ and $g$ are both non-negative, \eqref{scaledopnorm} follows immediately from integrating
\[
f'(r)\geq-C_{\beta}\mu_1g(r)\|Z(t,0;r,\cdot)\|_{L^{\infty}}^{1-\beta}f^{\beta}(r)
\]
from $r=s$ to $r=t$ while noting that $f(t)=1$.
\end{proof}
Having proven Theorem \ref{FSest}, Lemma \ref{lemspcase} is an immediate Corollary: the solution to \eqref{eqvspcase} is represented by 
\[
\cv(t,\xi)=\intsol Z(t,\xi;0,\sigma)\cv_0(\sigma)d\sigma.
\]
The symmetric assumptions imposed on the initial data means that the solution will inherit them as well, according to item (3) in Theorem \ref{classicalthm} (the function $h_\epsilon$ is odd and $h'_\epsilon$ satisfies (S) from Theorem \ref{secondthm}). Thus, $\cv(t,\cdot)$ is maximized at $\xi=0$ for every $t$, from which \eqref{splinftybd} follows immediately from \eqref{supopbd}-\eqref{goodopnorm}.
\subsection{Proof of Proposition \ref{keyprop}}\label{pfkeyprop}
We now allow $\lambda\in[0,2]$, drop the extra assumptions (S) made on the initial data, and add a forcing term. As explained in \S\ref{heur2}, the symmetric properties (S) are intrinsic to the associated stochastic flow map. To exploit the enhanced regularity coming from the transport term, we employ the Feynman-Kac formula, item 4 in Theorem \ref{classicalthm}: let $\cv$ solve \eqref{c33} (dropping the subscript $\epsilon$ for convenience). It follows that if $\Phi$ is the stochastic diffeomorphism associated to the transport diffusion operator $\partial_t-4\nu\partial^2_\xi-\mu_1g(t)h(\xi)\partial_\xi$ given to us by Theorem \ref{sdiffexis},
\[
\partial_t\Phi(t,\xi;q)=-\mu_1g(t)h(\Phi(t,\xi;q))+\sqrt{8\nu}\dot{W}(t;q),\quad \Phi(0,\xi;q)=\xi, 
\]
and if we define $\ca:=\Phi^{-1}$ to be its inverse, set 
\[
\eta(t,\xi):=\exp\left(\mu_2\int_0^tg(r)h'\left(\Phi(r,\xi\right))dr\right),
\]
and adopt the notation  $\ca_{t,\xi}=\ca(t,\xi;q)$ and $\Phi_{t,\xi}=\Phi(t,\xi;q)$ (where $q$ is the probability variable), then solution to \eqref{c33} can be represented by 
\begin{equation}\label{c316}
	\cv(t,\xi)=\mathbb{E}\left[\cv_0(\ca_{t,\xi})\eta(t,\ca_{t,\xi})\right]+\mathbb{E}\left[\eta(t,\ca_{t,\xi})\int_0^t\frac{d(s,\Phi(s,\ca_{t,\xi}))}{\eta(s,\ca_{t,\xi})}ds\right].
\end{equation}
Since the lower order term in \eqref{c33}, the term $\mu_2 g(t)h'(\xi)$, is a constant multiple of the derivative of the transport term, the term $\mu_1 g(t)h(\xi)$, we can represent $\eta$ in terms of the derivative of the inverse flow map, $\cb:=\partial_\xi\ca$. Indeed, let us observe that if $\psi:=\partial_\xi\Phi$, then (dropping the probability variable $q$ for convenience)
\[
\partial_t\psi(t,\xi)=-\mu_1g(t)h'(\Phi(t,\xi))\psi(t,\xi),\quad \psi(0,\xi)=1,
\]
so that 
\[
\psi(t,\xi)=\partial_\xi\Phi(t,\xi)=\exp\left(-\mu_1\int_0^tg(r)h'(\Phi(r,\xi))dr\right).
\]
Since $\ca_{t,\xi}=\Phi^{-1}(t,\xi)$ is the inverse flow map, if we set $\cb(t,\xi):=\partial_\xi\ca(t,\xi)$ we get
\[
\cb(t,\xi)=\frac{1}{\partial_\xi\Phi(t,\ca_{t,\xi})}=\exp\left(\mu_1\int_0^tg(r)h'(\Phi(r,\ca_{t,\xi}))dr\right)=\left[\exp\left(\int_0^tg(r)h'(\Phi(r,\ca_{t,\xi}))dr\right)\right]^{\mu_1}\geq 1,
\]
with the last inequality holding true almost surely as $gh'\geq0$. From here, it is clear that if $\lambda:=\mu_2/\mu_1$, then 
\begin{equation}\label{cc315}
	\cb^{\lambda}(t,\xi)=[\partial_\xi\ca(t,\xi)]^{\lambda}=\exp\left(\mu_2\int_0^tg(r)h'\left(\Phi(r,\ca_{t,\xi}\right))dr\right)=\eta(t,\ca_{t,\xi}),
\end{equation}
or equivalently, $\eta(t,\xi)=\cb^{\lambda}(t,\Phi_{t,\xi})$. Plugging this into \eqref{c316}:
\begin{equation}\label{c317}
\cv(t,\xi)=\underbrace{\mathbb{E}\left[\cv_0(\ca_{t,\xi})\cb^{\lambda}(t,\xi)\right]}_{=:\cv_1(t,\xi)}+\underbrace{\mathbb{E}\left[\cb^{\lambda}(t,\xi)\int_0^td(s,\Phi(s,\ca_{t,\xi}))\cb^{-\lambda}(s,\Phi(s,\ca_{t,\xi}))ds\right]}_{=:\cv_2(t,\xi)}.
\end{equation}
Such a representation makes it clear that in order to get $L^p$ bounds on the solution, one has to get estimates on various moments on $\cb$, which happens to solve 
\begin{equation}\label{c318}
	\partial_t\cb(t,\xi;q)-4\nu\partial^2_\xi\cb(t,\xi;q)-\mu_1g(t)\partial_\xi[h(\xi)\cb(t,\xi;q)]+\sqrt{8\nu}\dot{W}(t;q)=0,\quad \cb(0,\xi;q)=1,
\end{equation}
since $\ca$ solves
\[
\partial_t\ca(t,\xi;q)-4\nu\partial^2_\xi\ca(t,\xi;q)-\mu_1g(t)h(\xi)\partial_\xi\ca(t,\xi;q)+\sqrt{8\nu}\dot{W}(t;q)=0,\quad \ca(0,\xi;q)=\xi,
\]
according to Theorem \ref{sdiffexis}. For $\lambda\in[0,1]$, we may apply H\"older's inequality together with the fact that $\cb\geq1$ almost surely, to get
\begin{equation}\label{c319}
	|\cv(t,\xi)|\leq \left[\|\cv_0\|_{L^{\infty}}+\int_0^t\|d(s,\cdot)\|_{L^{\infty}}ds\right]\mathbb{E}[\cb^{\lambda}(t,\xi)]\leq \left[\|\cv_0\|_{L^{\infty}}+\int_0^t\|d(s,\cdot)\|_{L^{\infty}}ds\right]\left(\mathbb{E}[\cb(t,\xi)]\right)^{\lambda}.
\end{equation}
Setting $\bar{\cb}(t,\xi):=\mathbb{E}[\cb (t,\xi)]$ we see from \eqref{c318} that 
\[
\partial_t\bar{\cb}(t,\xi)-4\nu\partial^2_\xi\bar{\cb}(t,\xi)-\mu_1g(t)\partial_\xi [h(\xi)\bar{\cb}(t,\xi)]=0,\quad \bar{\cb}(0,\xi)=1.
\]
In particular, $\bar{\cb}$ satisfies the hypotheses required in Lemma \ref{lemspcase} (i.e., condition (S) from Theorem \ref{secondthm}), so that 
\begin{equation}\label{cc319}
	\sup_{\xi\in\rone}\mathbb{E}[\cb(t,\xi)]=\|\bar{\cb}(t,\cdot)\|_{L^{\infty}}=\bar{\cb}(t,0)\leq G(0,t).
\end{equation}
Plugging this into \eqref{c319} and taking the supremum over $\xi$ on the left-hand side immediately gives us \eqref{linftybd}. When $\lambda=1$ and if $d(t,\cdot)$ satisfies (S) from Theorem \ref{secondthm}, then the volume potential $\cv_2$ can be represented by 
\[
\cv_2(t,\xi)=\int_0^t\intsol Z(t,\xi;s,\sigma)d(s,\sigma)d\sigma ds,
\]
where $Z$ is the fundamental solution to the operator $\cd_{t,\xi}$ \eqref{parabop}. According to item (3) in Theorem \ref{classicalthm}, the solution will be maximized at $\xi=0$, i.e., $\|\cv_2(t,\cdot)\|_{L^{}\infty}=\cv_2(t,0)$, from which \eqref{implinftybd} follows from evaluating the above at $\xi=0$ and \eqref{supopbd}. 

To get $L^{\infty}$ bounds when $\lambda\geq1$ requires us to estimate the $\lambda$'th moment of $\cb$ in $L^{\infty}$, and we were unable to obtain such estimates; at least not in terms of supercritical entities, as explained in section \S\ref{heur3}. On the other hand, if we try to bound the solution $\cv$ in $L^1$ via utilizing the representation \eqref{c317}, we see that we need to bound the $\lambda-1$ moment of $\cb$, which is certainly less than one if we assume $\lambda\in[1,2]$. Indeed, we first employ the linearity property of taking expectations (Fubini-Tonelli), followed by the change in variables $\sigma:=\ca_{t,\xi}$ (or equivalently $\xi=\Phi_{t,\sigma}$) together with the fact that $\cb=\partial_\xi\ca\geq1$ almost surely, and H\"older's inequality to get
\begin{align*}
\intsol|\cv_1(t,\xi)|d\xi&\leq \mathbb{E}\left[\intsol |\cv_0(t,\ca_{t,\xi})|\cb^{\lambda}(t,\xi)d\xi\right]=\intsol |\cv_0(t,\sigma)|\mathbb{E}[\cb^{\lambda-1}(t,\Phi_{t,\sigma})]d\sigma\\
&\leq\intsol |\cv_0(t,\sigma)|\left(\mathbb{E}[\cb(t,\Phi_{t,\sigma})]\right)^{{\lambda-1}}d\sigma\leq \|\cv_0\|_{L^1}\|\bar{\cb}(t,\cdot)\|_{L^{\infty}}^{\lambda-1}.
\end{align*}
The volume potential $\cv_2$ is handled similarly: invoking a change in variables $\sigma:=\ca_{t,\xi}$
\[
\|\cv_2(t,\cdot)\|_{L^1}\leq\int_0^t\mathbb{E}\left[\intsol \cb^{\lambda}(t,\xi)\frac{|d(s,\Phi(s,\ca_{t,\xi}))|}{\cb^{\lambda}(s,\Phi(s,\ca_{t,\xi}))}d\xi\right]ds=\int_0^t\mathbb{E}\left[\intsol \cb^{\lambda-1}(t,\Phi_{t,\sigma})\frac{|d(s,\Phi_{s,\sigma})|}{\cb^{\lambda}(s,\Phi_{s,\sigma})}d\sigma\right]ds.
\]
Employing one more change in variable $y:=\Phi_{s,\sigma}$ yields:
\begin{align*}
\|\cv_2(t,\cdot)\|_{L^1}\leq&\int_0^t\mathbb{E}\left[\intsol \cb^{\lambda-1}(t,\Phi(t,\ca_{s,y}))|d(s,y)|\cb^{1-\lambda}(s,y)dy\right]ds\\
&\leq \int_0^t\intsol \mathbb{E}\left[\cb^{\lambda-1}(t,\Phi(t,\ca_{s,y}))\right]|d(s,y)|dy ds,
\end{align*}
where in the last inequality we used the fact that $1-\lambda\leq0$ (since $\lambda\in[1,2]$) and $\cb\geq1$ almost surely. As in the case of $\cv_1$, we may exploit the fact that $\lambda\in[1,2]$ and employ H\"older's inequality to obtain
\[
\|\cv_2(t,\cdot)\|_{L^1}\leq \|\bar{\cb}(t,\cdot)\|_{L^{\infty}}^{\lambda-1}\int_0^t\intsol |d(s,y)|dy ds,
\]
whence
\[
\|\cv(t,\cdot)\|_{L^1}\leq\left[\|\cv_0\|_{L^1}+\int_0^t\|d(s,\cdot)\|_{L^1} ds\right]\|\bar{\cb}(t,\cdot)\|_{L^{\infty}}^{\lambda-1},
\]
and so the claimed bound \eqref{l1bd} follows from the above inequality together with \eqref{cc319}. Finally, it is straightforward to check that if the initial data and forcing satisfy (S) from Theorem \ref{secondthm}, then so will the solution (according to item (3) in Theorem \ref{classicalthm} since $h$ is odd and $h'$ does satisfy (S) as well).
\subsection{Proof of Theorem \ref{thmbuildmod}}\label{pfthmbuildmod}
Assume the hypotheses of Theorem \ref{thmbuildmod}, extend $\Omega_0$ and the forcing term $a$ in an odd fashion about $\xi=0$, let $\cv_0=\Omega_0'$, $d:=\partial_\xi a$, and let $\{\cv_\epsilon\}$ be the sequence of solutions to \eqref{c33} given to us by Proposition \ref{keyprop} with any $\mu_1\geq C\beta^{-1}$ and $\mu_2=\mu_1+C\beta^{-1}$ (so that $\lambda:=\mu_2/\mu_1\in[1,2]$). Notice that $\cv_\epsilon$ is smooth and even, so that if we define 
\begin{equation}\label{defoeps}
	\Omega_\epsilon(t,\xi):=\int_0^\xi\cv_\epsilon(t,\eta)d\eta,
\end{equation}
then this is a smooth odd function that solves \eqref{c31} ($\cv_\epsilon(t,\cdot)$ is even, forcing $\partial_\xi\cv_{\epsilon}(t,0)=0$), with $h_\epsilon$ as drift and $h'_\epsilon$ inside the integral on the right-hand side ($h_\epsilon'$ converges to $\beta\xi^{\beta-1}$, hence the factor of $\beta^{-1}$ in $\mu_2$). We wish to obtain uniform in $\epsilon$ bounds on $\Omega_\epsilon$ in order to construct a solution, utilizing a classical Arzel\`a-Ascoli Theorem. To be able to pass to the zero $\epsilon$ limit (along a subsequence if necessary), we need to control some H\"older norm of $\cv_\epsilon(t,\cdot)$. This norm is allowed to depend on sub-critical quantities, say $\|g\|_{L^{p}}$ some $p>2/(1+\beta)$, since it does not show up in the final estimate. It is only needed to make sure $\Omega$ is twice continuously differentiable in space (and once in time). Even though we are allowed to make sub-critical assumptions when running the compactness argument, the fact that the drift term is unbounded presents some technical difficulties that need to be taken care of delicately. This is why in Remark \ref{rmkforce} we say that if the forcing term $a(t,\cdot)$ is non-decreasing and concave (which makes $d(t,\cdot):=\partial_\xi a(t,\cdot)$ satisfies the symmetries (S) from Theorem \ref{secondthm}) we can guarantee the existence of a classical solution to \eqref{c31} under the assumption that $a$ is just continuous and bounded. Without such symmetries, our approach requires higher-regularity from the forcing term $a$: it needs to be Lipschitz in the spatial variable.

Let us start by obtaining $L^{\infty}$ bounds that are uniform in $\epsilon$. We emphasize again that such a bound is only required to construct a $C_t^1C_x^2$ solution to \eqref{c31}. The key estimates that we care about at the end of the day are \eqref{linftybd} and \eqref{l1bd}. First, recall the representation \eqref{c317}
\begin{equation}\label{c319n}
\cv_\epsilon(t,\xi)=\underbrace{\mathbb{E}\left[\cv_0(\ca_{t,\xi})\cb^{\lambda}(t,\xi)\right]}_{=:\cv_{\epsilon,1}(t,\xi)}+\underbrace{\mathbb{E}\left[\cb^{\lambda}(t,\xi)\int_0^td(s,\Phi(s,\ca_{t,\xi}))\cb^{-\lambda}(s,\Phi(s,\ca_{t,\xi}))ds\right]}_{=:\cv_{\epsilon,2}(t,\xi)},
\end{equation}
where $\cb$ solves 
\[
\partial_t\cb(t,\xi;q)-4\nu\partial^2_\xi\cb(t,\xi;q)-\mu_1g(t)\partial_\xi[h_\epsilon(\xi)\cb(t,\xi;q)]+\sqrt{8\nu}\dot{W}(t;q)=0,\quad \cb(0,\xi;q)=1.
\]
It follows that (utilizing the almost sure lower bound $\cb\geq1$)
\begin{equation}\label{c319n2}
	\|\cv_\epsilon(t,\cdot)\|_{L^{\infty}}\leq \left[\|\cv_0\|_{L^{\infty}}+\int_0^t\|d(s,\cdot)\|_{L^{\infty}}ds\right]\sup_{\xi\in\rone}\mathbb{E}[\cb^{\lambda}(t,\xi)].
\end{equation}
Keeping in mind that $\lambda=\mu_2/\mu_1$, an application of It\^o's Lemma followed by taking expectations tells us that $\cb_{\lambda}(t,\xi):=\mathbb{E}[\cb^{\lambda}(t,\xi)]$ solves 
\[
\partial_t\cb_{\lambda}(t,\xi)-4\nu\partial^2_\xi\cb_{\lambda}(t,\xi)-\mu_1g(t)h_\epsilon(\xi)\partial_\xi\cb_{\lambda}(t,\xi)-\mu_2g(t)h_\epsilon'(\xi)\cb_{\lambda}(t,\xi)=0,\quad \cb_{\lambda}(0,\xi)=1.
\]
Further for every fixed $t$, $\cb_\lambda(t,\cdot)$ satisfies (S) from Theorem \ref{secondthm}, so that $\cb_{\lambda}(t,0)=\|\cb_{\lambda}(t,\cdot)\|_{L^{\infty}}$, from which an application of Duhamel's principle followed by an integration by parts tell us that 
\begin{align*}
\|\cb_{\lambda}(t,\cdot)\|_{L^{\infty}}=\cb_{\lambda}(t,0)=&1+\mu_2\int_0^tg(s)\intsol\Psi(t-s,\sigma)h_\epsilon'(\sigma)\cb_{\lambda}(s,\sigma)d\sigma ds\\
&-\mu_1\int_0^tg(s)\intsol\partial_\sigma\left[\Psi(t-s,\sigma)h_\epsilon(\sigma)\right]\cb_{\lambda}(s,\sigma)d\sigma ds,
\end{align*}
where $\Psi$ is the heat kernel. A simple change in variable reveals that for any $r\in\rone$, we have 
\begin{equation}\label{c319n3}
	\intsol\Psi(t,\sigma)|\sigma|^rd\sigma=C_r(\nu t)^{r/2},\quad C_r=\frac{4^r}{\sqrt{\pi}}\intsol|\sigma|^re^{-|\sigma|^2}d\sigma.
\end{equation}
From here, a straightforward application of the singular version of Gronwall's inequality (see Lemma \ref{singgron} in Appendix \ref{applps} below) together with \eqref{c319n3} yields a constant $K_0>0$ that is uniform in $\epsilon$ such that:
\begin{equation}\label{uniformepsbd}
	\sup_{t\in[0,T]}\|\cv_\epsilon(t,\cdot)\|_{L^{\infty}}\leq K_0.
\end{equation} 
The constant $K_0$ will depend on, for instance, $\|g\|_{L^{p}}$, $p>2/(1+\beta)$. Again, even though this is a subcritical assumption, this quantity does not show up in the final bounds \eqref{linftybd} and \eqref{l1bd}. It is only required to construct a smooth solution. Moreover, the constant $K_0$ depends on the $L_t^{1}L_x^{\infty}$ norm of $d$, i.e., $a(t,\cdot)$ has to be Lipschitz. If $d(t,\cdot)$ satisfies (S) from Theorem \ref{secondthm} (for which the forcing term $a(t,\cdot)$ described in Remark \ref{rmkforce} is a special case), then we show that $K_0$ can be chosen to depend on the weaker $L_t^{q}L_x^1$ norm of $d$ (some $q\geq2$) instead. Indeed, $\cv_{\epsilon,1}$ from \eqref{c319n} is bounded independent of $d$, while under such symmetry assumptions, the volume potential $\cv_{\epsilon,2}$ is non-negative and is maximized at $\xi=0$. In particular, an application of Duhamel's principle yields:
\begin{align*}
\|\cv_{\epsilon,2}(t,\cdot)\|_{L^{\infty}}=\cv_{\epsilon,2}(t,0)=&\mu_2\int_0^tg(s)\intsol\Psi(t-s,\sigma)h_\epsilon'(\sigma)\cv_{2}(s,\sigma)d\sigma ds\\
&-\mu_1\int_0^tg(s)\intsol\partial_\sigma\left[\Psi(t-s,\sigma)h_\epsilon(\sigma)\right]\cv_{2}(s,\sigma)d\sigma ds\\
&+\int_0^t\intsol\Psi(t-s,\sigma)d(s,\sigma)d\sigma ds.
\end{align*}
The last integral can be controlled by 
\[
\int_0^t\intsol\Psi(t-s,\sigma)d(s,\sigma)d\sigma ds\leq \nu^{-1/2}\int_0^t(t-s)^{-1/2}\|d(s,\cdot)\|_{L^{1}}ds,
\]
while the first two integrals can be handled as in the case of $\cb_\lambda$ above, before applying the singular Gronwall inequality \ref{singgron} below. At the end of it all, we see that $K_0$ in this case depends on subcritical assumptions on $g$ and the $L_t^qL_x^1$ norm of $d$, for some $q>2$. This is the only difference between symmetric and non-symmetric forces. The rest of the compactness argument carries over regardless of symmetry assumptions; in particular we will see that the H\"older semi-norm of $\cv_{\epsilon,2}$ will always be bounded in terms of the $L_t^qL_x^1$ norm of $d$, regardless of symmetries.

Speaking of which, and as mentioned earlier, the fact that we are dealing with an unbounded drift velocity (in the limit) presents some technical difficulties that restrict us from obtaining global H\"older estimates. This is not an issue, since Arzel\`a-Ascoli, see for instance \cite[Theorem 4.44]{Folland1999book}, does not require global continuity estimates; local ones work just fine. To do so, we need to prove the following ``weighted'' H\"older estimates for the heat kernel \eqref{hk}: for any $\xi\in\rone$, any $t>0$, any $\delta\in(0,1)$, and any $\gamma\in(0,1)$ we have
\begin{align}
	&\intsol\left|[\partial_\sigma \Psi(t,\xi-\sigma+\delta)-\partial_\sigma\Psi(t,\xi-\sigma)]h_\epsilon(\sigma)\right|d\sigma\leq \frac{8\delta^{\gamma}}{(\nu t)^{(\gamma+1)/2}}(1+|\xi|^{\beta}+(\nu t)^{\beta/2})^{\gamma}((\nu t)^{\beta}+|\xi|^{\beta})^{1-\gamma},\label{wgthdholdgrad}\\
	&\intsol\left|[\Psi(t,\xi-\sigma+\delta)-\Psi(t,\xi-\sigma)]h_\epsilon'(\sigma)\right|d\sigma\leq \frac{\delta^{\gamma}}{(\nu t)^{(\gamma+1)/2}}[1+(\nu t)^{\gamma/2}].\label{wgthdhold}
\end{align}
We also need to justify using the following standard H\"older estimate (valid for any $\xi,\sigma,\delta,\gamma$):
\begin{equation}\label{c335n}
	|\Psi(t,\xi-\sigma+\delta)-\Psi(t,\xi-\sigma)|\leq (\nu t)^{-(\gamma+1)/2}\delta^{\gamma}.
\end{equation}
We start by proving \eqref{wgthdholdgrad}: note that for any $\mu\in\rone$,
\begin{align}
\intsol\left|\partial_\sigma \Psi(t,\mu-\sigma)h_\epsilon(\sigma)\right|d\sigma&\leq |h_\epsilon(\mu)|\intsol\left|\partial_\sigma \Psi(t,\mu-\sigma)\right|d\sigma+\intsol|h_\epsilon(\sigma)-h_\epsilon(\mu)||\partial_\sigma\Psi(t,\mu-\sigma)|d\sigma\nonumber\\
&\leq 8((\nu t)^{-1/2}|\mu|^{\beta}+(\nu t)^{\frac{\beta-1}{2}}).\label{c331}
\end{align}
Next, we observe that from $|\partial^2_\mu\Psi(t,\mu)|\leq (2\nu t)^{-1}\Psi(t,\mu/2)$ and $|\mu|^\beta\Psi(t,\mu/\sqrt{2})\leq2^{5\beta/2} (\nu t)^{\beta/2}\Psi(t,\mu/2)$ we have
\begin{align*}
&\left|[\partial_\sigma \Psi(t,\xi-\sigma+\delta)-\partial_\sigma\Psi(t,\xi-\sigma)]h_\epsilon(\sigma)\right|=\left|\int_{\xi-\sigma}^{\xi-\sigma+\delta}\partial^2_\mu\Psi(t,\mu)h_\epsilon(\sigma)d\mu\right|\nonumber\\
&\leq \int_{\xi-\sigma}^{\xi-\sigma+\delta}|\partial^2_\mu\Psi(t,\mu)(h_\epsilon(\sigma)-h_\epsilon(\xi-\mu))|d\mu+\int_{\xi-\sigma}^{\xi-\sigma+\delta}|\partial^2_\mu\Psi(t,\mu)h_\epsilon(\xi-\mu)|d\mu\nonumber\\
&\leq \frac{4}{\nu t}\int_{\xi-\sigma}^{\xi-\sigma+\delta}\Psi\left(t,\frac{\mu}{2}\right)\left[|\sigma-\xi+\mu|^{\beta}+|\xi|^{\beta}+(\nu t)^{\beta/2}\right]d\mu\leq \frac{4(1+|\xi|^{\beta}+(\nu t)^{\beta/2})}{\nu t}\int_{\xi-\sigma}^{\xi-\sigma+\delta}\Psi\left(t,\frac{\mu}{2}\right)d\mu,
\end{align*}
where in the last inequality we used $|\sigma-\xi+\mu|\leq\delta\leq1$. Setting $A:=4(1+|\xi|^{\beta}+t^{\beta/2})$ and integrating the above over $\sigma$ we get
\begin{equation}\label{c332}
	\intsol\left|[\partial_\sigma \Psi(t,\xi-\sigma+\delta)-\partial_\sigma\Psi(t,\xi-\sigma)]h_\epsilon(\sigma)\right|d\sigma\leq \frac{A}{\nu t}\intsol\int_{\xi-\sigma}^{\xi-\sigma+\delta}\Psi\left(t,\frac{\mu}{2}\right)d\mu d\sigma=\frac{2A\delta}{\nu t}.
\end{equation}
The claimed bound \eqref{wgthdholdgrad} follows from \eqref{c331} and \eqref{c332}, together with the simple identity $a=a^{\gamma}a^{1-\gamma}$. As in for \eqref{wgthdhold}, we first note that for any $\sigma\in\rone$,
\begin{equation}\label{c333}
|\Psi(t,\xi-\sigma+\delta)-\Psi(t,\xi-\sigma)|\leq \frac{1}{\sqrt{2\nu t}}\int_{\xi-\sigma}^{\xi-\sigma+\delta}\Psi\left(t,\frac{\mu}{\sqrt{2}}\right)d\mu,
\end{equation}
so that from bound \eqref{ptwsbdhprime}, $h'(\sigma)\leq 1$ for any $|\sigma|\geq1$, we get 
\begin{equation}\label{c334}
	\int_{|\sigma|\geq 1}\left|[\Psi(t,\xi-\sigma+\delta)-\Psi(t,\xi-\sigma)]h_\epsilon'(\sigma)\right|d\sigma\leq \frac{2\delta}{\sqrt{\nu t}}.
\end{equation}
On the other hand, we have 
\[
\int_{\xi-\sigma}^{\xi-\sigma+\delta}\Psi\left(t,\frac{\mu}{\sqrt{2}}\right)d\mu=\left[\int_{\xi-\sigma}^{\xi-\sigma+\delta}\Psi\left(t,\frac{\mu}{\sqrt{2}}\right)d\mu\right]^{\gamma}\left[\int_{\xi-\sigma}^{\xi-\sigma+\delta}\Psi\left(t,\frac{\mu}{\sqrt{2}}\right)d\mu\right]^{1-\gamma}\leq\frac{2^{-\frac{\gamma+3}{2}}\delta^{\gamma}}{(\nu t)^{\gamma/2}}.
\]
Bounding the right-hand side of \eqref{c333} with the above, we get 
\begin{equation}\label{c335}
	\int_{|\sigma|\leq 1}\left|[\Psi(t,\xi-\sigma+\delta)-\Psi(t,\xi-\sigma)]h_\epsilon'(\sigma)\right|d\sigma\leq \frac{2^{1-\gamma}\delta^{\gamma}}{(\nu t)^{(1+\gamma)/2}}\int_{|\sigma|\leq 1}h_\epsilon'(\sigma)d\sigma\leq \frac{2^{-\frac{\gamma+2}{2}}\delta^{\gamma}}{(\nu t)^{(1+\gamma)/2}},
\end{equation}
from which \eqref{wgthdhold} follows immediately from \eqref{c334}-\eqref{c335}. From \eqref{c333}, $\Psi(t,\mu)\leq (\nu t)^{-1/2}$, and $a=a^{\gamma}a^{1-\gamma}$ we get \eqref{c335n}.

To get uniform in $\epsilon$ H\"older estimates, we apply Duhamel's principle together with an integration by parts, to represent $\cv_{\epsilon}$ as 
\begin{align}
\cv_{\epsilon}(t,\xi)=&\intsol\Psi(t,\sigma)\cv_0(\xi-\sigma)d\sigma-\mu_1\int_0^tg(s)\intsol\partial_\sigma[\Psi(t-s,\xi-\sigma)]h_{\epsilon}(\sigma)\cv_{\epsilon}(s,\sigma)d\sigma ds\nonumber\\
&+\frac{C}{\beta}\int_0^tg(s)\intsol\Psi(t-s,\xi-\sigma)h_{\epsilon}'(\sigma)\cv_{\epsilon}(s,\sigma)d\sigma ds+\int_0^t\intsol\Psi(t-s,\xi-\sigma)d(s,\sigma)d\sigma ds.\label{c335n2}
\end{align}
For a given $\xi\in\mathbb{R}$, applying \eqref{wgthdholdgrad}-\eqref{c335n}, together with \eqref{uniformepsbd} guarantees the existence of a constant, 
\[
K_1=K_1(K_0,\xi,\beta,T,\gamma,\|g\|_{L^{\infty}},\mu_1,\|d\|_{L_t^{\infty}L_x^1})
\]
that blows up as $\gamma\rightarrow1^-$ (but is otherwise uniform in $\epsilon$) such that 
\begin{equation}\label{holdbdveps}
\sup_{t\in[0,T]}|\cv_{\epsilon}(t,\xi+\delta)-\cv_{\epsilon}(t,\xi)|\leq K_1\delta^{\gamma},\quad \gamma\in(0,1).
\end{equation}
To address Remark \ref{rmkforce}, note carefully that if the constant $K_0$ depends on $\|d\|_{L_t^{\infty}L_x^{1}}$ instead of $\|d\|_{L_t^{\infty}L_x^{\infty}}$ (which is the case if $d(t,\cdot)$ satisfies (S) from Theorem \ref{secondthm}), then so will $K_1$: from \eqref{c335n}
\[
\left|\int_0^t\intsol[\Psi(t-s,\xi-\sigma+\delta)-\Psi(t-s,\xi-\sigma)]d(s,\sigma)d\sigma ds\right|\leq \nu^{-\frac{\gamma+1}{2}}\delta^{\gamma}\int_0^t(t-s)^{-\frac{\gamma+1}{2}}\|d(s,\cdot)\|_{L^1}ds,
\] 
that is to say, the H\"older semi-norm of the volume potential corresponding to the external force is bounded in terms of $\|d\|_{L_t^qL_x^1}$, some $q>2$, regardless of symmetries, (contrary to the pointwise bound $K_0$ \eqref{uniformepsbd}).

Next, recall that $\Omega_\epsilon$ as defined in \eqref{defoeps} is odd in the spatial variable and solves
\begin{equation}\label{xtraeq}
	\partial_t\Omega_\epsilon(t,\xi)-4\nu\partial^2_\xi\Omega_{\epsilon}(t,\xi)-\mu_1g(t)h_{\epsilon}(\xi)\partial_\xi\Omega_\epsilon(t,\xi)=\frac{C}{\beta}g(t)\int_0^\xi h_{\epsilon}'(\eta)\partial_\eta\Omega_\epsilon(t,\eta)d\eta+a(t,\xi)
\end{equation}
for every $(t,\xi)\in(0,T]\times\rone$, together with 
\begin{align*}
	&\Omega_\epsilon(t,0)=0,\quad \forall t\in[0,T],\\
	&\Omega_\epsilon(0,\xi)=\Omega_0(\xi),\quad \forall \xi\in[0,\infty).
\end{align*}
Now, if we set $F_\epsilon(t,\xi)$ to be equal to the right-hand side of \eqref{xtraeq} then we must have  
\[
\Omega_{\epsilon}(t,\xi)=\intsol \Psi(t,\xi-\sigma)\Omega_0(\sigma)d\sigma+\int_0^t\intsol \Psi(t-s,\xi-\sigma)[\mu_1g(s)h_{\epsilon}(\sigma) \partial_\sigma\Omega_{\epsilon}(s,\sigma)+F_{\epsilon}(s,\sigma)]d\sigma ds.
\] 
Next, let us obtain estimates on $\partial_t\Omega_\epsilon$. Let $Q\in C([0,T]\times\rone)$ be given, and suppose that given $\xi\in\rone$, $\gamma\in(0,1)$, there exists an $M=M(\xi,\gamma,T)$ such that $|Q(s,\xi+\delta)-Q(s,\xi)|\leq M_{\xi}\delta^{\gamma}$ for any $\delta\in(0,1)$, and any $s\in[0,T]$. Let
\[
u(t,\xi):=\int_0^t\intsol \Psi(t-s,\xi-\sigma)Q(s,\sigma)d\sigma ds,
\]
and suppose that $Q$ doesn't grow faster than a Gaussian (in order for the integral to make sense): say $|Q(s,\xi)|\leq C_B(1+|\xi|^3)$. It is a classical fact that $u\in C_t^1C_x^2([0,T]\times\rone)$, solves $\partial_t u-\nu\partial^2_\xi u=Q(t,\xi)$, and one has the representation
\[
\partial_t u(t,\xi)=Q(t,\xi)+\int_0^t\intsol \partial_t\Psi(t-s,\xi-\sigma)(Q(s,\sigma)-Q(s,\xi))d\sigma ds,
\]
see, for instance, \cite[Chapter 5]{LSUbook1968}. We have 
\[
\left|\int_{|\sigma-\xi|\leq1/2} \partial_t\Psi(t-s,\xi-\sigma)(Q(s,\sigma)-Q(s,\xi))d\sigma\right|\lesssim M_\xi (t-s)^{\gamma/2-1},
\]
while 
\[
\left|\int_{|\sigma-\xi|\geq1/2} \partial_t\Psi(t-s,\xi-\sigma)(Q(s,\sigma)-Q(s,\xi))d\sigma\right|\lesssim \tilde {M}_\xi(t-s)^{-1}e^{\frac{-1}{(t-s)}}\lesssim\tilde{M}_\xi,
\]
for some $\tilde{M}_\xi>0$. From this, we clearly have $|\partial_t u(t,\xi)|\leq K_{\xi,T}$, for some $K_{\xi,T}>0$. It follows that courtesy of the uniform in epsilon bounds \eqref{uniformepsbd} and \eqref{holdbdveps}, Arzel\`a-Ascoli \cite[Theorem 4.44]{Folland1999book} guarantees the existence of an $\Omega\in C_t^1C_x^{1,\gamma}([0,T]\times\rone)$, any $\gamma\in(0,1)$, and a subsequence of $\{\Omega_\epsilon\}_{\epsilon>0}$ such that $\Omega_{\epsilon}\rightarrow \Omega$ uniformly on compact subsets of $[0,T]\times\rone$. The dominated convergence theorem tells that the limiting function satisfies
\begin{equation}\label{newlbl43}
	\Omega(t,\xi)=\intsol \Psi(t,\xi-\sigma)\Omega_0(\sigma)d\sigma+\int_0^t\intsol \Psi(t-s,\xi-\sigma)[g(s)h_0(\sigma) \partial_\sigma\Omega(s,\sigma)+F(s,\sigma)]d\sigma ds,
\end{equation}
where $h_0(\xi)$ is the odd extension of $\xi^{\beta}$ about $\xi=0$, and $F$ is the odd (in $\xi$ variable) function
\[
F(s,\xi)=g(s)C\int_0^{\xi}\eta^{\beta-1}\Omega(s,\eta)d\eta,\quad \xi\in\rone.
\]
It is clear then that $\Omega$ is in fact $\Omega\in C_t^1C_x^2((0,T]\times[0,\infty))$, courtesy of the above discussion regarding volume potentials, and so is a classical (point-wise) solution to \eqref{c31}. It is also clear that it inherits the qualitative properties of the approximating sequence $\{\Omega_\epsilon\}_{\epsilon>0}$ (meaning $\Omega(t,\cdot)$ is non-decreasing and concave on $[0,\infty)$ for every $t\in[0,T]$ if $\Omega_0$ and $a$ share those properties) and satisfies \eqref{BCcondomega}-\eqref{bddd2} as $\cv_\epsilon$ obeys (S) from Theorem \ref{secondthm} and satisfies the uniform in $\epsilon$ bounds \eqref{linftybd}-\eqref{implinftybd}. This concludes the proof.

\section{Proof of Theorem \ref{mainthm}}\label{pfmainthm}
Assume the hypotheses of Theorem \ref{mainthm}, that is: let $u_0$ be a given, smooth, divergence-free vector field, let $T_*>0$ be given, let $F:[0,T_*)\times\rd\rightarrow\rd$ be a smooth vector field, let $u:[0,T_*)\times\rd$ (and $p:[0,T_*)\times\rd\rightarrow\rone$) be the corresponding (smooth) solution to \eqref{NSE}, and suppose $u,F$ satisfy \eqref{hypoth}. Assume $u_0$ and $F$ satisfy one of conditions $A$ or $B$ stated in Theorem \ref{mainthm}, so that the solution $u$ inherits them. Let $T\in[0,T_*)$ be arbitrary, suppose $\Omega$ is a time-dependent modulus of continuity on $[0,T]$ (according to Definition \ref{deftimdepmod}), and assume that $\partial_\xi \Omega\in C([0,T]\times[0,\infty))$ (so that $\Omega(t,\cdot)$ is a Lipschitz modulus of continuity, and hence satisfies the requirement in Lemma \ref{gradbd}). Furthermore, suppose $u_0$ strictly obeys $\Omega(0,\cdot)$ as in Definition \ref{defobeymod}. Let us define 
\begin{equation}\label{deftau}
	\tau:=\sup\left\{t\in(0,T]:|u(s,x)-u(s,y)|<\Omega(s,|x-y|)\ \forall s\in[0,t],x\neq y\right\},\\
\end{equation}
and suppose for the moment that $\tau>0$. Our ultimate goal is to construct an $\Omega$ that forces $\tau=T$, which would imply that $|u(t,x)-u(t,y)|\leq \Omega(t,|x-y|)$ holds true for any $t\in[0,T]$ and any $x,y$ (and hence, any $t\in[0,T_*)$ as $T\in[0,T_*)$ was an arbitrary number). This would imply that $\|u(t,\cdot)\|_{L^{\infty}}\leq \|\Omega(t,\cdot)\|_{L^{\infty}}$: indeed, we have for any $x,y$, $|u(t,x)-u(t,y)|\leq \|\Omega(t,\cdot)\|_{L^{\infty}}$, and so in the periodic setting, there must always exist a $y$ such that $u(t,y)=0$ (owing to the fact that $u(t,\cdot)$ has zero average), from which the claimed bound follows by maximizing over $x$. In the whole space, since we assume $u(t,\cdot)$ vanishes at infinity, we first send $y$ to infinity to get $|u(t,x)|\leq \|\Omega(t,\cdot)\|_{L^{\infty}}$, so that the claimed bound follows by taking supremum in $x$. Bottom line is, in both cases, controlling the supremum norm of the solution $u$ boils down to bounding $\|\Omega(t,\cdot)\|_{L^{\infty}}$. Let us start by identifying the only possible scenario at time $\tau$ if $\tau<T$: one that is depicted by the solution vector-field violating its strict modulus of continuity away from the diagonal $x=y$, the so called ``breakthrough scenario'' first identified in \cite{KNS2008,KNV2007} in (see also \cite[Lemma~2.3]{Kiselev2011} for a time-dependent version). The proof of Proposition \ref{brkthru} below is similar to the proofs of Propositions 5.1 and 5.2 in our previous work \cite{Ibdah2022} (see also \cite[Lemma~2.3]{Kiselev2011}); nevertheless, we provide a proof since we slightly change the conditions that $\Omega$ need to satisfy.
\begin{prop}\label{brkthru}
 Let $T>0$ be given and let $u\in C([0,T]\times\rd)\cap C([0,T];W^{1,\infty}(\rd))$ be a vector field such that $u(t,\cdot)\in C^2(\rd)\cap W^{2,\infty}(\rd)$ for every $t\in[0,T]$ and for which either
 \begin{equation}\label{decay}
 \lim_{|x|\rightarrow\infty}|\nabla u(t,x)|=0,\quad \forall t\in[0,T],
 \end{equation}
or $u(t,\cdot)$ is periodic with some period $L>0$ in each direction. Let $\Omega\in C([0,T]\times[0,\infty))$ be such that $\Omega(t,\cdot)$ is a modulus of continuity for every $t\in[0,T]$, and suppose $\partial_\xi\Omega\in C([0,T]\times[0,\infty))$. Assume that 
\begin{enumerate}[label=\Alph*)]
\item There exists some $K\geq80$ such that 
\[
\inf_{t\in[0,T]}\Omega(t,\xi)\geq 3\sup_{t\in[0,T]}\|u(t,\cdot)\|_{L^{\infty}}, \quad\forall\xi\geq K,
\]
\item and for any $\xi>0$
\[
\inf_{t\in[0,T]}\Omega(t,\xi)>0.
\]
\end{enumerate}
Suppose $u(0,\cdot)$ strictly obeys $\Omega(0,\cdot)$, and let $\tau$ be as defined in \eqref{deftau}. It follows that $\tau$ is positive and if $\tau<T$, then there exists $x_0\neq y_0$ for which
\[
|u(\tau,x_0)-u(\tau,y_0)|=\Omega(\tau,|x_0-y_0|).
\]
\end{prop}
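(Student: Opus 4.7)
The proof naturally decomposes into two steps: first showing $\tau>0$, and then (assuming $\tau<T$) identifying the breakthrough at $\tau$. The underlying idea is that strict obedience is a stable condition under continuity, so it can only fail for the first time in the quantitatively weakest way, namely via equality at an interior pair.

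\emph{Step 1} ($\tau>0$). The strict obedience of $u_0$ by $\Omega(0,\cdot)$ combined with Lemma \ref{gradbd} forces $\|\nabla u_0\|_{L^\infty}<\partial_\xi\Omega(0,0)$; the continuities $u\in C([0,T];W^{1,\infty})$ and $\partial_\xi\Omega\in C$ then yield a uniform gap $\partial_\xi\Omega(t,0)-\|\nabla u(t,\cdot)\|_{L^\infty}\geq\eta>0$ on some interval $[0,t_1]$. The plan is to split the inequality $|u(t,x)-u(t,y)|<\Omega(t,|x-y|)$ into three regimes: (a) for $|x-y|<\delta$ with $\delta$ small, the mean-value bound on $u$ together with a first-order expansion $\Omega(t,\xi)=\partial_\xi\Omega(t,\bar\xi)\xi$ yields a strict gap of order $(\eta/2)|x-y|$; (b) for $\delta\leq|x-y|\leq K$, the joint continuity of $(t,x,y)\mapsto\Omega(t,|x-y|)-|u(t,x)-u(t,y)|$ together with compactness closes the argument---trivially in the periodic case after restricting $x$ to a fundamental cell, and in the whole-space case after upgrading $\nabla u(t,\cdot)\to 0$ at infinity to a uniform-in-$t$ statement on $[0,t_1]$ to handle points with both coordinates large; (c) for $|x-y|\geq K$, condition A gives a gap of $\|u\|_{L^\infty}$, strictly positive unless $u\equiv 0$, in which case condition B settles it.

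\emph{Step 2} (breakthrough at $\tau<T$). The set $S:=\{t\in(0,T]:\text{strict obedience holds on }[0,t]\}$ is downward-closed by construction. If $\tau\in S$, then $u(\tau,\cdot)$ strictly obeys $\Omega(\tau,\cdot)$, Lemma \ref{gradbd} reapplies at time $\tau$, and the argument of Step 1 with initial time $\tau$ produces $\epsilon>0$ with strict obedience on $[\tau,\tau+\epsilon]$, contradicting $\tau=\sup S$. Hence $\tau\notin S$, so there must exist $x_0\neq y_0$ with $|u(\tau,x_0)-u(\tau,y_0)|\geq\Omega(\tau,|x_0-y_0|)$. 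For every $t<\tau$ the point $t$ belongs to $S$, so $|u(t,x_0)-u(t,y_0)|<\Omega(t,|x_0-y_0|)$; passing $t\to\tau^-$ via the continuities of $u$ and $\Omega$ yields $|u(\tau,x_0)-u(\tau,y_0)|\leq\Omega(\tau,|x_0-y_0|)$, whence equality as claimed.

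\emph{Main obstacle.} The most delicate point is the medium-scale regime in the whole-space setting, since a priori $\nabla u(t,\cdot)\to 0$ at infinity only pointwise in $t$, while the set $\{(x,y):\delta\leq|x-y|\leq K\}$ is noncompact. The remedy is to use the hypothesis $u\in C([0,T];W^{1,\infty})$ together with a finite covering of $[0,t_1]$ to promote pointwise-in-$t$ decay of $\nabla u$ to uniform-in-$t$ decay, after which the mean-value theorem along the segment from $x$ to $y$ controls $|u(t,x)-u(t,y)|$ when both points are far from the origin; the reduction of Step 2 to Step 1 by time translation is essentially automatic once Step 1 is in hand.
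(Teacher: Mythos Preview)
Your proposal is correct and follows essentially the same approach as the paper's own proof: both decompose into (i) showing $\tau>0$ via Lemma~\ref{gradbd} at $t=0$, a small-/medium-/large-$|x-y|$ split, and compactness (with conditions A and B invoked only in the whole-space case), and (ii) arguing by contradiction that strict obedience at time $\tau$ would let Step~1 restart, forcing a breakthrough pair $x_0\neq y_0$ with equality by continuity from $t\to\tau^-$. The only cosmetic difference is that the paper handles the uniform-in-$t$ decay of $\nabla u$ more directly---fixing $K_1$ at $t=0$ and using continuity of $t\mapsto\|\nabla u(t,\cdot)\|_{L^\infty(\{|x|\geq K_1\})}$ to shrink the time window---rather than your finite-cover argument, but both work.
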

\begin{rmk}
Conditions $A$ and $B$ above are only required when working in the whole space. The easiest way to satisfy them is choosing $\Omega(t,\xi)=\tilde{\Omega}(t,\xi)+\delta\omega(\xi)$, where $\omega$ is an unbounded Lipschitz modulus of continuity, and $\tilde{\Omega}$ is bounded. This comes at the expense of having to deal with a forcing term of order $\delta$ in the condition \eqref{c21}. One then lets $\delta$ go to zero before using the $L^{\infty}$ bound of $\tilde{\Omega}$. This is done rigorously below after proving Proposition \ref{brkthru}.
\end{rmk}

\begin{proof}
We start by showing $\tau>0$. According to Lemma \ref{gradbd}, we have $\|\nabla u(0,\cdot)\|_{L^{\infty}}<\partial_\xi\Omega(0,0)$. For $(t,\xi)\in[0,T]\times[0,\infty)$, let us define $h(t,0):=\|\nabla u(t,\cdot)\|_{L^{\infty}}-\partial_\xi\Omega(t,0)$ and set
\[
h(t,\xi):=\|\nabla u(t,\cdot)\|_{L^{\infty}}-\frac{\Omega(t,\xi)}{\xi},\quad \xi>0.
\]
Note that by  our assumptions, we have $h\in C\left([0,T]\times[0,\infty)\right)$. Since  $h(0,0)<0$, continuity of $h$ guarantees the existence of an $\epsilon_0>0$ and a $\delta>0$ such that $h(t,\xi)<0$ whenever $(t,\xi)\in[0,\epsilon_0]\times[0,\delta]$. It follows that
\[
|u(t,x)-u(t,y)|\leq |x-y|\|\nabla u(t,\cdot)\|_{L^{\infty}}<|x-y|\left[\frac{\Omega(t,|x-y|)}{|x-y|}\right]=\Omega(t,|x-y|),
\]
provided $(t,|x-y|)\in[0,\epsilon_0]\times[0,\delta]$. This is true regardless whether we are in the periodic or whole space setting. Let us now assume $u(t,\cdot)$ is periodic with period $L>0$, which doesn't require $\Omega$ to satisfy conditions $A)$ and $B)$ above. We define $\mathcal{A}:=\left\{(x,y)\in[0,L]^d\times\mathbb{R}^d:|x-y|\in[\delta,2L\sqrt{d}]\right\}$, where $L>0$ is the period of $\theta(t,\cdot)$, and note that since the set $[0,\epsilon_0]\times \mathcal{A}$ is compact, the function $R(t,x,y):=|u(t,x)-u(t,y)|-\Omega(t,|x-y|)$ is uniformly continuous on it, and as $R(0,x,y)<0$, the same must be true on $[0,\epsilon]\times \mathcal{A}$, some $\epsilon\in(0,\epsilon_0]$. As $u(t,\cdot)$ is $L$ periodic and $\Omega(t,\cdot)$ is non-decreasing, we must have $\tau\geq\epsilon>0$.

Let us now assume $u(t,\cdot)$ satisfies \eqref{decay} rather than being periodic, and let us assume $\Omega$ satisfies $A)$ and $B)$. It follows that for any $(t,|x-y|)\in[0,\epsilon_0]\times[K,\infty)$, we have 
\[
|u(t,x)-u(t,y)|\leq2\sup_{t\in[0,T]}\|u(t,\cdot)\|_{L^{\infty}}<\Omega(t,|x-y|).
\]
Thus, it remains to show the existence of an $\epsilon_1\in(0,\epsilon_0]$ such that 
\[
|u(t,x)-u(t,y)|<\Omega(t,|x-y|),\quad \forall (t,|x-y|)\in[0,\epsilon_1]\times [\delta,K].
\]
To that extent, for any given $\mu>0$, \eqref{decay} tells us that we can find a large enough $K_1$ such that if we define the set $\mathcal{A}:=\{x\in\rd:|x|\geq K_1\}$, then $\|\nabla u(0,\cdot)\|_{L^{\infty}(\mathcal{A})}<\mu$. Since we assumed that $u\in C([0,T];W^{1,\infty}(\rd))$, the function $\gamma(t):=\|\nabla\theta(t,\cdot)\|_{L^{\infty}(\mathcal{A})}$ is continuous as well. Hence, we must have $\|\nabla u(t,\cdot)\|_{L^{\infty}(\mathcal{A})}<\mu$ for every $t\in[0,\epsilon_1]$, some $\epsilon_1=\epsilon_1(\mu)\in(0,\epsilon_0]$. Thus, we if we set 
\[
\kappa:=\inf_{t\in[0,\epsilon_0]}\Omega(t,\delta)>0,
\]
we can chose $K_1$ large enough such that 
\begin{equation}\label{small}
|\nabla u(t,x)|<\frac{\kappa}{2K},\quad \forall|x|\geq K_1, \quad t\in[0,\epsilon_1].
\end{equation}
Next, we split the set $\mathcal{B}:=\left\{(x,y)\in\rd\times\rd:|x-y|\in[\delta,K]\right\}$, into $\mathcal{B}_1\cup\mathcal{B}_2$ where $\mathcal{B}_2$ is the complement of $\mathcal{B}_1:=\left\{(x,y)\in\mathcal{B}:\min\{|x|,|y|\}>K+K_1\right\}$.  By the mean value theorem, whenever $(t,x,y)\in[0,\epsilon_1]\times\mathcal{B}_1$, we must have, for some $\sigma\in(0,1)$
\[
|u(t,x)-u(t,y)|\leq |x-y||\nabla u(t,\sigma(x-y)+y)|< \Omega(t,\delta)\leq \Omega(t,|x-y|),
\]
where we used $|y+\sigma(x-y)|\geq|y|-|x-y|\geq K_1$, $|x-y|\in[\delta,K]$, and \eqref{small} in the second inequality, while we used the fact that $\Omega(t,\cdot)$ is non-decreasing in the third one. Finally, since $\mathcal{B}_2$ is compact, we can certainly obtain a small enough $\epsilon>0$ such that the strict modulus of continuity is obeyed for $(t,x,y)\in[0,\epsilon]\times\mathcal{B}_2$. The same argument can be repeated if we assume for the sake of contradiction that $\tau<T$, with $u$ strictly obeying $\Omega$ at time $\tau$ and every $x\neq y$.
\end{proof}
Let us start by briefly addressing the forcing term $F$ (we will make more comments at the end of this section): it is assumed to be smooth in order to guarantee that the solution $u$ is $C_t^1C_x^2$. However, notice that the a-priori bound \eqref{mainbd} depends only on the $L_t^1L_x^{\infty}$ norm of $F$. We explain how to obtain such a-priori bound below, but let us for now assume the existence of a smooth, bounded, non-decreasing, and concave $\chi:[0,\infty)\rightarrow[0,\infty)$ satisfying $\chi(0)=0$ and a smooth $f:[0,T]\rightarrow[0,\infty)$ such that
\begin{equation}\label{holdcondf}
	|F(t,x)-F(t,y)|\leq f(t)\chi(|x-y|),\quad \forall (t,x,y)\in [0,T]\times\rd\times\rd.
\end{equation}
That being said, we now prove that if $\Omega\in C_t^1C_{\xi}^2((0,T]\times (0,\infty))$ satisfies
\begin{equation}\label{c44}
	\partial_t\Omega(t,\xi)-4\nu\partial^2_\xi\Omega(t,\xi)-g(t)\xi^{\beta}\partial_\xi\Omega(t,\xi)>\frac{C_d}{1-\beta}h(t)\int_0^{\xi}\frac{\partial_\eta\Omega(t,\eta)}{\eta^{1-\beta}}d\eta+f(t)\chi(\xi)
\end{equation}
for every $(t,\xi)\in(0,T]\times(0,\infty)$ on top of satisfying all the hypotheses required in Proposition \ref{brkthru}, then $\tau$ as defined in \eqref{deftau} is equal to $T$. For convenience, we summarize the conditions $\Omega$ needs to satisfy:
\begin{enumerate}
	\item Has to be a classical (pointwise, $C_t^1C_\xi^2$) solution to \eqref{c44}.
	\item Both $\Omega$ and $\partial_\xi\Omega$ have to be continuous on $[0,T]\times[0,\infty)$.
	\item For every fixed $t\in[0,T]$, $\Omega(t,0)=0$ and $\partial_\xi\Omega(t,\cdot)\geq0$.
	\item $u_0$ strictly obeys $\Omega(0,\xi)$ and $\partial^2_\xi\Omega(t,0^+)=-\infty$.
	\item Satisfies A) and B) from Proposition \ref{brkthru}.
\end{enumerate}
Assuming conditions 2-5 are met, we are guaranteed the positivity of $\tau$ by Proposition \ref{brkthru}, so let us suppose for the sake of contradiction that $\tau<T$, meaning $u$ must ``break'' the strict inequality at some $x_0\neq y_0$, regardless whether we are in the periodic setting or otherwise: $|u(\tau,x_0)-u(\tau,y_0)|=\Omega(\tau,|x_0-y_0|)$. We show this is not possible provided condition 1 is met. The analysis will be easier to carry out if we work with scalar functions and if $x_0-y_0=\xi e_1$, some $\xi>0$. This can be done without much trouble courtesy of the rotation invariance of \eqref{NSE}. Namely, we choose a rotational matrix $\mathcal{R}$ such that $\mathcal{R}(x_0-y_0)=\xi e_1$, where $\xi:=|x_0-y_0|$ and $e_1$ is the standard unit vector in the first coordinate, and we define $\tilde x_0:=\mathcal{R}x_0$, $\tilde y_0:=\mathcal{R}y_0$, $\widetilde u(t,x):=\mathcal{R} u(t,\mathcal{R}^{-1}x)$, $\widetilde p(t,x):=p(t,\mathcal{R}^{-1}x)$, and $\tilde{F}(t,x):=\mathcal{R}F(t,\mathcal{R}^{-1}x)$. It follows that there exists some unit vector $e\in\mathbb{S}^{d-1}$, possibly depending on the time $\tau$, but is otherwise a constant on $[0,\tau]\times\rd$ such that $\left[\widetilde u(\tau,\tilde x_0)-\widetilde u(\tau,\tilde y_0)\right]\cdot e=\Omega(\tau,\xi)$.
We define a scalar $\theta(t,x):=e\cdot\widetilde u(t,x)$ and study its evolution:
\begin{align}
	&\partial_t\theta(t,x)-\nu\Delta\theta(t,x)+(\widetilde u\cdot\nabla)\theta(t,x)+e\cdot\nabla\tilde{p}(t,x)=e\cdot\tilde{F}(t,x),&&\forall (t,x)\in[0,\tau]\times\rd,\label{evolthetadd}\\
	&\theta(\tau,\tilde x_0)-\theta(\tau,\tilde y_0)=\Omega(\tau,\xi), &&(\tilde x_0-\tilde y_0)=\xi e_1\label{violationDD}.
\end{align}
Since $\mathcal{R}$ is an orthogonal matrix, and since moduli of continuity are not sensitive to distance preserving maps, we have
\begin{align}
&|\theta(t,x)-\theta(t,y)|<\Omega(t,|x-y|), &&\forall t\in [0,\tau),\quad x\neq y,\label{modtheta}\\
&|\widetilde u(t,x)-\widetilde u(t,y)|\leq g(t)|x-y|^{\beta}, &&\forall (t,x,y)\in [0,\tau]\times\rd\times\rd.\label{holdcondb}
\end{align}
Next, we set $\rho(t):=\theta(t,\tilde x_0)-\theta(t,\tilde y_0)-\Omega(t,\xi)$, and note that courtesy of \eqref{modtheta}, we have $\rho(t)<0$ for any $t\in[0,\tau)$, so that if we choose $\Omega$ in a way that guarantees $\rho'(\tau)<0$ then \eqref{violationDD} is absurd, meaning $\tau=T$. Using the PDE \eqref{evolthetadd}, which holds true in the pointwise sense, we have 
\begin{align}
\rho'(\tau)=&\nu\Delta \theta(\tau,\tilde x_0)-\nu\Delta \theta(\tau,\tilde y_0)-\partial_t\Omega(\tau,\xi)+(\widetilde u\cdot\nabla)\theta(\tau,\tilde y_0)-(\widetilde u\cdot\nabla)\theta(\tau,\tilde x_0)\nonumber\\
&+e\cdot\left[\nabla\tilde{p}(\tau,\tilde{y}_0)-\nabla\tilde{p}(\tau,\tilde{x}_0)+\tilde{F}(\tau,\tilde{x}_0)-\tilde{F}(\tau,\tilde{y}_0)\right]\label{c40}.
\end{align}
From \eqref{modtheta} and continuity we see that at time $\tau$, $\theta$ still obeys $\Omega$, albeit not (necessarily) strictly. Nevertheless, we may use Lemma \ref{localmaxprincp} (as it doesn't require strict ``obedience'') to extract local dissipation from the Laplacian as well as to deal with the transport term. As $\tilde{x}_0\neq\tilde{y}_0$, inequality \eqref{localdissp} tells us that 
\begin{equation}\label{c41}
\Delta \theta(\tau,\tilde x_0)-\Delta \theta(\tau,\tilde y_0)\leq4\partial^2_\xi\Omega(\tau,\xi).
\end{equation}
Moreover, \eqref{derv} forces all first order derivatives of $\theta$ in directions other than $e_1$ to be zero while $\partial_1\theta(\tau,\tilde x_0)=\partial_1\theta(\tau,\tilde y_0)=\partial_\xi\Omega(\tau,\xi)\geq0$,
since $\Omega(t,\cdot)$ is non-decreasing. Thus, using \eqref{holdcondb}
\begin{equation}\label{c42}
(\widetilde u\cdot\nabla)\theta(\tau,\tilde y_0)-(\widetilde u\cdot\nabla)\theta(\tau,\tilde x_0)=(\widetilde u_1(\tau,\tilde{y}_0)-\widetilde u_1(\tau,\tilde{x}_0))\partial_1\theta(\tau,\tilde x_0)\leq g(\tau)\xi^{\beta}\partial_\xi\Omega(\tau,\xi).
\end{equation}
Next, we recall that $\nabla\tilde{p}$ has the same continuity estimates as $\nabla p$, and since $F$ is divergence-free, an application of Lemma \ref{pressest} with $b=u$, together with the fact that we have two moduli of continuity for $u$ at our disposal (the assumed H\"older one \eqref{holdcondb} as well as $\Omega$) yields
\begin{equation}\label{c43}
	|\nabla\tilde{p}(\tau,\tilde{y}_0)-\nabla\tilde{p}(\tau,\tilde{x}_0)|\leq C_d g(t)\left[\int_0^{\xi}\Omega(\tau,\eta)\eta^{\beta-2}d\eta+\frac{2\Omega(\tau,\xi)}{(1-\beta)\xi^{1-\beta}}\right]\leq\frac{2C_dg(t)}{1-\beta}\int_0^\xi\eta^{\beta-1}\partial_\eta\Omega(\tau,\eta)d\eta,
\end{equation}
where we used $\Omega(t,\xi)\geq0$, integrated by parts, and utilized $\Omega(t,0)=0$ together with $0\leq\partial_\xi\Omega(t,\cdot)<\infty$. Finally applying the continuity estimate we have on the forcing term \eqref{holdcondf} and bounding the right-hand side of \eqref{c40} by \eqref{c41}-\eqref{c43}, we get  
\[
\rho'(\tau)\leq 4\nu\partial^2_\xi\Omega(\tau,\xi)-\partial_t\Omega(\tau,\xi)+g(\tau)\xi^{\beta}\partial_\xi\Omega(\tau,\xi)+\frac{C_dg(\tau)}{1-\beta}\int_0^\xi\eta^{\beta-1}\partial_\eta\Omega(\tau,\eta)d\eta+f(\tau)\chi(\xi).
\]
Thus, if $\Omega\in C_t^1C_{\xi}^2((0,T]\times (0,\infty))$ satisfies \eqref{c44}  for every $(t,\xi)\in(0,T]\times(0,\infty)$, condition 1, together with conditions 2-5, then $\rho'(\tau)<0$, meaning $\tau=T$. 

We now construct such an $\Omega$, starting with defining
\begin{align*}
&\omega(\xi):=\frac{\xi}{1+\xi^{(1+\beta)/2}},\quad \xi\geq0,\\
&\Omega_0(\xi):=B_1\tanh(B_0\xi), \quad \xi\geq0,
\end{align*}
where $B_0$ and $B_1$ are both chosen large enough, depending on $\|u_0\|_{L^{\infty}}$ and $\|\nabla u_0\|_{L^{\infty}}$ such that $u_0$ strictly obeys $\Omega_0$. Utilizing the concavity of $\tanh$, we invite the reader to verify that choosing 
\[
B_0:=2\frac{\|\nabla u_0\|_{L^{\infty}}}{\|u_0\|_{L^{\infty}}},\quad B_1:=3\frac{\|u_0\|_{L^{\infty}}}{\tanh(1)},
\]
forces $u_0$ to obey $\Omega_0$ strictly. It is clear that $\omega\in C^1[0,\infty)\cap C^2(0,\infty)$ with $\omega''(0^+)=-\infty$. Let us set $C:=C_d/(1-\beta)\geq1$, and for any given $\epsilon\in(0,1]$, we choose $\mu_1:=C\beta^{-1}/\epsilon$. Define  
\[
\varphi(\xi):=\int_0^{\xi}\eta^{\beta-1}\omega'(\eta)d\eta,
\]
and notice that $\varphi$ is bounded uniformly: for small $\xi$, $\omega'\approx 1$, while for large $\xi$, $\omega'$ decays like $\xi^{-(\beta+1)/2}$. For any $\delta\in(0,1)$, let $\tilde{\Omega}$ be the solution to
\begin{align}
	&\partial_t\tilde{\Omega}(t,\xi)-4\nu\partial^2_\xi\tilde{\Omega}(t,\xi)-\mu_1g(t)\xi^{\beta}\partial_\xi\tilde{\Omega}(t,\xi)= Cg(t)\int_0^\xi\eta^{\beta-1}\partial_\eta\tilde{\Omega}(t,\eta)d\eta+f(t)\chi(\xi)+\delta \left(\beta+C\right)g(t) \varphi(\xi),\nonumber\\
	&\tilde{\Omega}(t,0)=0,\quad \forall t\in[0,T],\nonumber\\
	&\tilde{\Omega}(0,\xi)=\Omega_0(\xi),\quad \forall \xi\in[0,\infty),\label{c45}
\end{align}
given to us by Theorem \ref{thmbuildmod} (keeping in mind Remark \ref{rmkforce} about the smoothness of the forcing term(s)). As $u$ is smooth on $[0,T]$, $g$ is uniformly bounded and continuous on $[0,T]$, so that $\tilde{\Omega}$ has uniformly bounded first and second derivatives. The initial data in \eqref{c45} obviously obeys the symmetries listed in the hypotheses of Theorem \ref{thmbuildmod}, and so does the forcing term. Putting all this together with the fact that $\omega$ is a modulus of continuity according to Definition \ref{defmod} that happens to be unbounded, conditions 2 through 5 are met. We now claim that $\Omega$ satisfies \eqref{c44} (and hence, condition 1, as we already know $\Omega\in C_t^1C_\xi^2([0,T]\times(0,\infty))$). To see this, first note that since $\partial_\xi\tilde{\Omega}\geq0$ and $\mu_1\geq1$, we must have 
\begin{align*}
\partial_t\tilde{\Omega}(t,\xi)-4\nu\partial^2_\xi\tilde{\Omega}(t,\xi)=&\mu_1g(t)\xi^{\beta}\partial_\xi\tilde{\Omega}(t,\xi)+Cg(t)\int_0^\xi\eta^{\beta-1}\partial_\eta\tilde{\Omega}(t,\eta)d\eta+f(t)\chi(\xi)+\delta \left(\beta+C\right)\varphi(\xi)g(t)\\
&\geq g(t)\xi^{\beta}\partial_\xi\tilde{\Omega}(t,\xi)+Cg(t)\int_0^\xi\eta^{\beta-1}\partial_\eta\tilde{\Omega}(t,\eta)d\eta+f(t)\chi(\xi)+\delta \left(\beta+C\right)\varphi(\xi)g(t).
\end{align*}
Furthermore, since $\omega$ is strictly concave and $\omega'(\xi)\leq 1$, we have 
\[
\left(\beta+C\right)\varphi(\xi)=\left(\beta+C\right)\int_0^{\xi}\eta^{\beta-1}\omega'(\eta)d\eta>4\nu\omega''(\xi)+\xi^{\beta}\omega'(\xi)+C\int_0^\xi\eta^{\beta-1}\omega'(\eta)d\eta
\]
from which it becomes clear that $\Omega:=\tilde{\Omega}+\delta\omega$ satisfies \eqref{c44}. In other words, $\Omega$ is a time-dependent modulus of continuity that satisfies the hypothesis of Proposition \ref{brkthru}. It follows that $\tau=T$, and so for any $x,y$,
\[
|u(t,x)-u(t,y)|\leq\tilde{\Omega}(t,|x-y|)+\delta\omega(|x-y|),\quad \forall t\in[0,T].
\]
Sending $\delta$ to zero from above, using bound \eqref{maindbomega}, and noting that $T$ was any number in $[0,T_*)$ gives us bound \eqref{mainbd}, by recalling that $\lambda=1+\epsilon$. The bound claimed in Remark \ref{rmkscalednorm} follows from using bound \eqref{scaledopnorm} instead of \eqref{goodopnorm} in the proof of Proposition \ref{keyprop} (and by extension, in the proof of Theorem \ref{thmbuildmod}). 

Let us finally turn to the definition of the function $\chi$: the continuity estimate for the forcing term $F$ from \eqref{holdcondf}. Let us suppose that $\|F(t,\cdot)\|_{L^{\infty}}\leq f(t)$, suppose $F$ is smooth, and without loss in generality, assume $f\geq\kappa>0$ uniformly. It follows that if we set $K:=1+\|\nabla F\|_{L_t^{\infty}L_x^{\infty}}$, then for any $\alpha\in(0,1)$, we have $|F(t,x)-F(t,y)|\leq 2(K/\kappa)^{\alpha}[f(t)]^{1-\alpha}\chi(|x-y|)$ where
\[
\chi(\xi):=
\begin{cases}
 	\xi^{\alpha}, &\xi\in[0,1],\\
 	1, &\xi>1.
\end{cases}
\]
The a-priori bound \eqref{maindbomega} is independent of $\alpha$ (keeping in mind Remark \ref{rmkforce}), thus, we may let $\alpha\rightarrow0^+$ and then $\kappa\rightarrow 0^+$ to get \eqref{mainbd}. One can very easily smoothen out the singularity in $\chi'$ if necessary, see Remark \ref{rmkforce}.

\section*{Acknowledgments}
The author thanks Theodore Drivas, Tarek Elgindi, Gautam Iyer, Yu Gu, Alexander Kiselev, Huy Nguyen, and Stavros Papathanasiou for useful discussions, and Peter Constantin for helpful comments on an earlier version. The author extends a special thanks to Tarek for carefully reading an earlier version of this manuscript and for providing us with the counter-example in Appendix \ref{appce}.

\titleformat{\section}{\normalfont\Large\bfseries}{\appendixname~\thesection:}{1em}{}
\begin{appendices}
\section{$L^p$ Bounds and higher regularity}\label{applpbds}
In this section we obtain $L^p$ bounds on solutions to \eqref{c33} and then use them to control sub-critical H\"older norms in terms of supercritical ones. Notice that when $\lambda\in[0,1]$ in Proposition \ref{keyprop}, we have $L^{\infty}$ and $L^1$ bounds. Those can be interpolated to get any $L^p$ bound, so we focus on the more interesting case when $\lambda\in[1,2]$. We have the following:
\renewcommand\theprop{A}
\begin{prop}\label{proplpbds}
Let $\cv_0:\rone\rightarrow\rone$ be a given smooth function. Let $T>0$, $\mu_1\in(0,\infty)$, $\mu_2\in[0,\infty)$ be given constants, and let $g:[0,T]\rightarrow[0,\infty)$ be continuous and $d:[0,T]\times\rone\rightarrow\rone$ be smooth. Let $\cv_\epsilon$ solve 
\begin{equation}\label{a1}
	\partial_t\cv_\epsilon(t,\xi)-4\nu\partial^2_\xi\cv_{\epsilon}(t,\xi)-\mu_1g(t)h_{\epsilon}(\xi)\partial_\xi\cv_{\epsilon}(t,\xi)=\mu_2g(t)h_{\epsilon}'(\xi)\cv_{\epsilon}(t,\xi)+d(t,\xi),
\end{equation}
with $\cv_\epsilon(0,\xi)=\cv_0(\xi)$. Here $h_\epsilon$ is the smooth approximation to the odd extension of $\xi^{\beta}$ introduced in \S\ref{mainsec}. Suppose that $\lambda:=\mu_2/\mu_1\in[1,2]$, and let $G$ be as defined in \eqref{defG2}. Then the following a-priori bound holds true for every $t\in[0,T]$ and every $p\in[1,\infty)$ such that $p\lambda\leq 2$:
\begin{equation}\label{lpbd}
	\|\cv_\epsilon(t,\cdot)\|_{L^{p}}\leq \left[\|\cv_0\|_{L^{p}}+\int_0^t\|d(s,\cdot)\|_{L^p}ds\right]G^{\lambda-1/p}(0,t).
\end{equation} 
\end{prop}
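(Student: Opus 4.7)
I will imitate the proof of Proposition \ref{keyprop}, in particular the $L^1$ argument in \S\ref{pfkeyprop}, but adapted to the $L^p$ scale. The starting point is the Feynman--Kac representation \eqref{c319n}, which splits $\cv_\epsilon = \cv_{\epsilon,1}+\cv_{\epsilon,2}$ into an initial-data term and a volume potential, both written as expectations involving the inverse flow map $\ca_{t,\xi}$ and its spatial derivative $\cb=\partial_\xi\ca\geq1$ almost surely. Throughout, I exploit the identity $\eta(t,\ca_{t,\xi})=\cb^{\lambda}(t,\xi)$ from \eqref{cc315} together with the fact that $\bar{\cb}:=\mathbb{E}[\cb]$ satisfies the hypotheses of Lemma \ref{lemspcase}, yielding the pointwise bound $\|\bar{\cb}(t,\cdot)\|_{L^{\infty}}\leq G(0,t)$.

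For $\cv_{\epsilon,1}$, I would first apply Jensen's inequality $(\mathbb{E}|Z|)^p\leq\mathbb{E}|Z|^p$ pointwise in $\xi$, then swap integration and expectation by Fubini--Tonelli, and then perform the change of variables $\sigma=\ca_{t,\xi}$ (equivalently $\xi=\Phi_{t,\sigma}$, $d\xi=\cb(t,\Phi_{t,\sigma})^{-1}d\sigma$). The Jacobian consumes exactly one power of $\cb$, leaving $\cb^{\lambda p-1}$ inside the remaining expectation. The assumption $p\lambda\leq 2$ combined with $\lambda\geq 1$ places the residual exponent $\lambda p-1$ in $[0,1]$, allowing a Hölder reduction $\mathbb{E}[\cb^{\lambda p-1}]\leq(\mathbb{E}\cb)^{\lambda p-1}\leq\|\bar{\cb}(t,\cdot)\|_{L^{\infty}}^{\lambda p-1}$. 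Taking $p$-th roots produces the desired factor $G^{\lambda-1/p}(0,t)$ multiplying $\|\cv_0\|_{L^p}$.

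For $\cv_{\epsilon,2}$, I would first invoke Minkowski's integral inequality to pull the time integral outside $\|\cdot\|_{L^p_\xi}$, reducing matters to an $s$-pointwise $L^p$ bound on the integrand. The same Jensen plus Fubini plus change-of-variables scheme then applies, but with two consecutive substitutions: first $\sigma=\ca_{t,\xi}$ and then $y=\Phi_{s,\sigma}$ (whose Jacobian is $d\sigma=\cb(s,y)dy$). The second substitution contributes a factor $\cb^{1-\lambda p}(s,y)$, which is bounded by $1$ using $\cb\geq 1$ a.s.\ together with $1-\lambda p\leq 0$ (guaranteed by $\lambda p\geq 1$). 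The surviving factor $\mathbb{E}[\cb^{\lambda p-1}(t,\Phi(t,\ca_{s,y}))]$ is controlled by the same Hölder bound as above, giving $\|d(s,\cdot)\|_{L^p}G^{\lambda-1/p}(0,t)$ after taking $p$-th roots. Summing the two contributions yields \eqref{lpbd}.

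The difficulty is not conceptual but bookkeeping: one must track precisely the powers of $\cb$ generated by each Jacobian (one from $\sigma=\ca_{t,\xi}$ and, for the volume potential, one more from $y=\Phi_{s,\sigma}$) and verify that the residual exponents on $\cb$ land inside $[0,1]$, so that Hölder against the first moment $\bar{\cb}$ can be used. The constraint $p\lambda\leq 2$ is exactly what keeps \emph{both} residual exponents admissible simultaneously; beyond this range, one would require genuine higher-moment control on $\cb$, which, as discussed in \S\ref{heur3}, is precisely the (currently open) obstruction to proving the conjectured bound \eqref{c28} and pushing the $L^{\infty}$ estimate \eqref{implinftybd} past $\lambda=1$.
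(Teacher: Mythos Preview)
Your proposal is correct and follows essentially the same route as the paper's proof: the Feynman--Kac representation \eqref{c319n}, the two changes of variables $\sigma=\ca_{t,\xi}$ and $y=\Phi_{s,\sigma}$, the use of $\cb\geq 1$ a.s.\ to discard $\cb^{1-\lambda p}$, and the H\"older reduction $\mathbb{E}[\cb^{\lambda p-1}]\leq(\mathbb{E}\cb)^{\lambda p-1}$ all match. The only cosmetic difference is the order of operations: the paper applies Minkowski's integral inequality over the probability space first (pulling $\mathbb{E}$ outside the $L^p_\xi$ norm) and then uses Jensen/H\"older to move it back inside, whereas you apply Jensen pointwise ($|\mathbb{E}Z|^p\leq\mathbb{E}|Z|^p$) before integrating in $\xi$; both routes land on the same integral $\int|\cv_0(\sigma)|^p\,\mathbb{E}[\cb^{\lambda p-1}(t,\Phi_{t,\sigma})]\,d\sigma$ and are interchangeable here.
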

\begin{proof}
The proof is essentially a slightly more cumbersome rendition of the one used to rigorously justify Proposition \ref{keyprop}, except we do $L^p$ estimates rather than $L^1$, employing Minkowski's integral inequality a couple of times. Dropping the subscript $\epsilon$ for notational convenience, recall that the Feynman-Kac formula \eqref{c317} permits us to represent the solution as $\cv=\cv_1+\cv_2$ where
\[
\cv_1(t,\xi):=\mathbb{E}\left[\cv_0(\ca_{t,\xi})\cb^{\lambda}(t,\xi)\right],\quad \cv_2(t,\xi):=\mathbb{E}\left[\cb^{\lambda}(t,\xi)\int_0^td(s,\Phi(s,\ca_{t,\xi}))\cb^{-\lambda}(s,\Phi(s,\ca_{t,\xi}))ds\right],
\]	
where $\ca_{t,\xi}=\Phi^{-1}(t,\xi)$, with $\Phi$ being the stochastic flow and $\cb=\partial_\xi\ca$ solves \eqref{c318}. An application of Minkowski's integral inequality followed by a change in variable $\sigma=\ca_{t,\xi}$ tells us that 
\[
\|\cv_1(t,\cdot)\|_{L^p}\leq \mathbb{E}\left[\left(\intsol\cb^{p\lambda}(t,\xi)|\cv_0(\ca_{t,\xi})|^pd\xi\right)^{1/p}\right]=\mathbb{E}\left[\left(\intsol\cb^{p\lambda -1}(t,\Phi_{t,\sigma})|\cv_0(\sigma)|^pd\sigma\right)^{1/p}\right].
\]
Applying H\"older's inequality followed by Fubini-Tonelli then yields
\[
\|\cv_1(t,\cdot)\|_{L^p}\leq\left(\intsol\mathbb{E}[\cb^{\lambda p-1}(t,\Phi_{t,\sigma})]|\cv_0(\sigma)|^pd\sigma\right)^{1/p}.
\]
Since $1\leq p\lambda\leq 2$, we have $0\leq p\lambda-1\leq1$, so that we may apply H\"older's inequality once more to get $\mathbb{E}[\cb^{\lambda p-1}(t,\Phi_{t,\sigma})]\leq \left(\mathbb{E}[\cb(t,\Phi_{t,\sigma})]\right)^{p\lambda-1}\leq \|\bar{\cb}(t,\cdot)\|_{L^{\infty}}^{p\lambda-1}$, with $\bar{\cb}=\mathbb{E}[\cb]$. Recalling the bound \eqref{cc319}: $\|\bar{\cb}(t,\cdot)\|_{L^{\infty}}\leq G(0,t)$ we get that $\|\cv_1(t,\cdot)\|_{L^p}\leq \|\cv_0\|_{L^p}G^{\lambda-1/p}(0,t)$. The volume potential $\cv_2$ is controlled similarly: Minkowski's integral inequality applied twice followed by the change in variable $\sigma=\ca_{t,\xi}$ and Fubini-Tonelli leads to 
\begin{align*}
\|\cv_2(t,\cdot)\|_{L^p}\leq&\mathbb{E}\left[\left(\intsol\left(\int_0^t\cb^{\lambda}(t,\xi)\left|d(s,\Phi(s,\ca_{t,\xi}))\right|\cb^{-\lambda}(s,\Phi(s,\ca_{t,\xi}))ds\right)^pd\xi\right)^{1/p}\right]\\
&\leq\mathbb{E}\left[\int_0^t\left(\intsol\cb^{p\lambda}(t,\xi)\left|d(s,\Phi(s,\ca_{t,\xi}))\right|^p\cb^{-p\lambda}(s,\Phi(s,\ca_{t,\xi}))d\xi\right)^{1/p}ds\right]\\
&=\mathbb{E}\left[\int_0^t\left(\intsol\cb^{p\lambda-1}(t,\Phi_{t,\sigma})\left|d(s,\Phi(s,\sigma))\right|^p\cb^{-p\lambda}(s,\Phi(s,\sigma))d\sigma\right)^{1/p}ds\right]\\
&=\int_0^t\mathbb{E}\left[\left(\intsol\cb^{p\lambda-1}(t,\Phi(t,\ca_{s,\mu}))|d(s,\mu)|^pd\mu\right)^{1/p}\right]ds.
\end{align*}
Invoking one more change in variable $\mu:=\Phi(s,\sigma)$ (or equivalently, $\sigma=\ca(s,\mu)$, meaning $d\sigma=\cb(s,\mu)d\mu$), using the fact that $p\lambda\in[1,2]$ by assumption, and the almost sure lower bound $\cb\geq1$ yields
\[
\intsol\cb^{p\lambda-1}(t,\Phi_{t,\sigma})\left|d(s,\Phi(s,\sigma))\right|^p\cb^{-p\lambda}(s,\Phi(s,\sigma))d\sigma\leq \intsol \cb^{p\lambda-1}(t,\Phi(t,\ca_{s,\mu}))|d(s,\mu)|^pd\mu,
\]
which upon a further application of H\"older's inequality followed by Fubini-Tonelli renders
\begin{align*}
\|\cv_2(t,\cdot)\|_{L^p}\leq\int_0^t\left(\intsol \mathbb{E}[\cb^{p\lambda-1}(t,\Phi(t,\ca_{s,\mu}))]|d(s,\mu)|^pd\mu\right)^{1/p}ds.
\end{align*}
As $p\lambda\in[1,2]$, we have $p\lambda-1\in[0,1]$, so that a final application of H\"older's inequality yields $\mathbb{E}[\cb^{p\lambda-1}(t,\Phi(t,\ca_{s,\mu}))]\leq\left(\mathbb{E}[\cb((t,\Phi(t,\ca_{s,\mu}))]\right)^{p\lambda-1}\leq \|\bar{\cb}(t,\cdot)\|_{L^{\infty}}^{p\lambda-1}$. Making use of \eqref{cc319} one more time allows us to conclude the proof:
\[
\|\cv_2(t,\cdot)\|_{L^p}\leq G^{\lambda-1/p}(0,t)\int_0^t\|d(s,\cdot)\|_{L^p}ds,
\]
\end{proof}
Having obtained $L^p$ bounds on $\cv$, those can be used to get $C_x^{0,\gamma}$ estimates on the solution $u$. Indeed, recall that by design, we have 
\begin{equation}\label{appholdbd}
	|u(t,x)-u(t,y)|\leq\Omega(t,|x-y|)=\int_0^{|x-y|}\cv(t,\eta)d\eta\leq |x-y|^{\frac{1}{q}}\|\cv(t,\cdot)\|_{L^p}\leq \|\cv_0\|_{L^p}|x-y|^{\frac{1}{q}}G^{\lambda-\frac{1}{p}}(0,t),
\end{equation}
where $\frac{1}{p}+\frac{1}{q}=1$, $1\leq p\lambda\leq2$. Recall that we can make $\lambda$ as close as we want to 1: $\lambda=1+\epsilon$, with $\epsilon\in(0,1)$ being arbitrary. One can use bound \eqref{appholdbd} to gain bounds on $u$ in $L_t^{p_0}C_x^{0,1/q}$ in terms of the $L_t^{p_1}C_x^{0,\beta}$ semi-norm: $[u(t,\cdot)]_{C_x^{0,1/q}}\lesssim G^{\epsilon+1/q}(0,t)$. For instance, if we wish to maximize the H\"older exponent $1/q$, the condition $p\lambda\leq 2$ tells us that $1/q$ can be at most $(1-\epsilon)/2$ (no larger than 1/2). Choosing this value of $q$ (with $\lambda=1+\epsilon$) we see that $[u(t,\cdot)]_{C_x^{0,1/q}}\leq \|\cv_0\|_{L^p}G^{\frac{\epsilon+1}{2}}(0,t)$. Choosing $p_0=2/(1+\epsilon)$, it is clear that $u\in L_t^{p_0}C_x^{0,1/q}$ provided $u\in L_t^1C_x^{0,\beta}$. Notice that the space $L_t^{p_1}C_x^{0,1/q}$, with $q=2/(1-\epsilon)$, is critical when $p_1=4/(3-\epsilon)$. Clearly $2/(1+\epsilon)>4/(3-\epsilon)$ when $\epsilon\in(0,1)$: a subcritical H\"older norm is controlled by a supercritical one. Of course one can trade of spatial regularity with temporal ones in \eqref{appholdbd} according to the rule $G\in L^p(0,T)$ provided $u\in L_t^pC_x^{0,\beta}$. The power $\epsilon+1/q$ is always less than 1, hence it allows us to break the criticality barrier. Finally, we point out that if we use $\tilde{G}$ as defined in Remark \ref{rmkscalednorm}, then \eqref{appholdbd} scales appropriately (as it should).
\section{A counter-example and sign changing solutions}\label{appce}
The purpose of this section is to show that the sign (and presence) of the drift term is absolutely crucial for breaking the criticality barrier: we show that Lemma \ref{lemspcase} (which is central to the entire analysis) is false if we flip the sign of (or eliminate) the transport term. We also show that solutions to the associated non-local one-dimensional problem in general do not obey a minimum principle. To that end, it is slightly more convenient to set $\nu=1$ and to approximate $\xi^{\beta}$ by means other than standard mollifiers. The following example in Proposition \ref{propce} is a slight modification of the one provided to us by Elgindi \cite{ElgindiPrivateComm}. Its application to constructing sign changing solutions is due to us.
\begin{prop}\label{propce}
Given $\epsilon\in(0,1)$, let 
\begin{equation}\label{detildeh}
	\bar{h}_{\epsilon}(\xi):=\int_0^{\xi}\frac{\beta}{\epsilon^{\frac{1-\beta}{2}}+|\eta|^{1-\beta}}d\eta,\quad \xi\in\rone.
\end{equation}
Then given any $p\in(0,2/(1+\beta))$ and $\mu\geq1$, there exists a non-negative smooth $g_\epsilon$ with $\|g_\epsilon\|_{L^p(0,T)}\leq1$ for any $T>0$ and an even solution to
\begin{equation}\label{eqvps2}
	\partial_t\cv_\epsilon(t,\xi)-4\partial^2_\xi\cv_\epsilon(t,\xi)+\mu_1g_\epsilon(t)\bar{h}_{\epsilon}(\xi)\partial_\xi\cv_\epsilon(t,\xi)=g_\epsilon(t)\bar{h}_{\epsilon}'(\xi)\cv_\epsilon(t,\xi),\quad \cv_\epsilon(0,\xi)=\exp\left(\frac{-\xi^2}{\epsilon}\right),
\end{equation}
such that for any $T\geq\epsilon$, there exists a positive constant $C$ (independent of $\epsilon$) for which 
\[
\int_0^T\|\cv_\epsilon(t,\cdot)\|_{L^{\infty}}dt\geq C\epsilon^{-1}.
\]
In particular, it simply is not possible to bound $\|\cv_\epsilon(t,\cdot)\|_{L_t^qL_x^{\infty}}$ uniformly in $\epsilon$ for any $q\geq1$, not unless $p\geq2/(1+\beta)$, i.e., not unless we make a critical or subcritical assumption.
\end{prop}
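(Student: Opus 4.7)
\noindent\textbf{Proof plan for Proposition \ref{propce}.}

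I would attack the claim via a parabolic rescaling that removes the $\epsilon$-dependence from the PDE, followed by a Dirichlet eigenvalue argument on an $\epsilon$-free limit problem. Set $\tilde{\cv}(s,\eta) := \cv_{\epsilon}(\epsilon s, \sqrt{\epsilon}\, \eta)$ and $\tilde{g}(s) := \epsilon^{(1+\beta)/2} g_{\epsilon}(\epsilon s)$. A direct change of variables, using the identities $\bar{h}_\epsilon(\sqrt{\epsilon}\,\eta) = \beta \epsilon^{\beta/2} H(\eta)$ with $H(\eta) := \int_0^\eta d\nu/(1+|\nu|^{1-\beta})$ and $\bar{h}_\epsilon'(\sqrt{\epsilon}\,\eta) = \epsilon^{-(1-\beta)/2} \beta/(1+|\eta|^{1-\beta})$, collapses \eqref{eqvps2} into the $\epsilon$-free problem
\[
\partial_s \tilde{\cv} - 4\partial_\eta^2 \tilde{\cv} + \mu_1 \beta \tilde{g}(s) H(\eta) \partial_\eta \tilde{\cv} = \frac{\beta\, \tilde{g}(s)}{1+|\eta|^{1-\beta}}\, \tilde{\cv}, \qquad \tilde{\cv}(0, \eta) = e^{-\eta^2}.
\]
The constraint $\|g_\epsilon\|_{L^p(0,T)}^p = \epsilon^{\gamma} \|\tilde{g}\|_{L^p(0,T/\epsilon)}^p$ with $\gamma := 1 - p(1+\beta)/2 > 0$ (by subcriticality) translates the bound $\|g_\epsilon\|_{L^p}\leq 1$ into a budget $\|\tilde{g}\|_{L^p}^p \leq \epsilon^{-\gamma}$ that diverges as $\epsilon \to 0^+$.

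I would then choose $\tilde{g}$ to be a smooth non-negative function identically equal to $1$ on $[1, S_\epsilon]$ and compactly supported, with $S_\epsilon := \epsilon^{-\gamma/2}$; this respects the budget since $\|\tilde{g}\|_{L^p}^p \lesssim S_\epsilon = \epsilon^{-\gamma/2} \leq \epsilon^{-\gamma}$ for $\epsilon$ small. With $\tilde{g} \equiv 1$ the spatial operator is $\mathcal{L} := 4\partial_\eta^2 - \mu_1 \beta H(\eta) \partial_\eta + c(\eta)$, $c(\eta) := \beta/(1+|\eta|^{1-\beta})$. The heart of the argument is to show that its principal Dirichlet eigenvalue $\lambda_R$ on $[-R, R]$ is strictly positive for $R$ large enough. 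First bound the principal eigenvalue $\lambda_R^0$ of the self-adjoint reduction $\mathcal{L}_0 := 4\partial_\eta^2 + c$ via the Rayleigh quotient with trial function $\psi(\eta) := \cos(\pi\eta/(2R))$:
\[
\lambda_R^0 \;\geq\; \frac{\int_{-R}^R c\, \psi^2 - 4\int_{-R}^R (\psi')^2}{\int_{-R}^R \psi^2} \;\gtrsim\; R^{\beta-1} - \frac{\pi^2}{R^2} \;>\; 0
\]
for all sufficiently large $R$ --- this is where the slow polynomial decay $c(\eta) \sim |\eta|^{\beta-1}$ with $\beta > 0$ is essential. For the even, positive principal eigenfunction $\phi_0$ of $\mathcal{L}_0$, which is decreasing on $[0, R]$, the product $H(\eta) \phi_0'(\eta)$ is non-positive everywhere, so the transport contribution $-\mu_1 \beta H \phi_0' \geq 0$ pointwise and $\mathcal{L}\phi_0 \geq \lambda_R^0 \phi_0$. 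The Berestycki-Nirenberg-Varadhan characterization of the principal eigenvalue for non-self-adjoint operators then gives $\lambda_R \geq \lambda_R^0 > 0$.

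Let $\phi_R > 0$ denote the corresponding positive principal eigenfunction of $\mathcal{L}$ on $[-R, R]$. On $[0,1]$, $\tilde{g} = 0$, so $\tilde{\cv}$ solves the heat equation with initial data $e^{-\eta^2}$ and $\tilde{\cv}(1, \cdot)$ is an explicit Gaussian, strictly positive (and bounded below by an $\epsilon$-independent constant) on $[-R, R]$. Pick an $\epsilon$-independent $c_0 > 0$ such that $c_0 \phi_R \leq \tilde{\cv}(1, \cdot)$ on $[-R, R]$. Then $w(s,\eta) := c_0 e^{\lambda_R (s-1)} \phi_R(\eta)$ solves $\partial_s w = \mathcal{L} w$ on $[1, S_\epsilon] \times (-R,R)$, vanishes at $\eta = \pm R$, and is dominated by $\tilde{\cv}$ at $s=1$; the parabolic maximum principle on $[1, S_\epsilon] \times [-R, R]$ gives $\tilde{\cv}(s, 0) \geq c_0 \phi_R(0) e^{\lambda_R (s-1)}$ for $s \in [1, S_\epsilon]$. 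Translating back and integrating yields
\[
\int_0^T \|\cv_\epsilon(t,\cdot)\|_{L^\infty} dt = \epsilon \int_0^{T/\epsilon} \|\tilde{\cv}(s,\cdot)\|_{L^\infty} ds \;\geq\; c_1\, \epsilon\, e^{\lambda_R\, \epsilon^{-\gamma/2}}
\]
whenever $T \geq \epsilon S_\epsilon = \epsilon^{1 - \gamma/2}$, which holds for any fixed $T > 0$ once $\epsilon$ is sufficiently small. The right-hand side dominates $C \epsilon^{-1}$ --- indeed any polynomial in $\epsilon^{-1}$ --- as $\epsilon \to 0^+$.

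The main obstacle is the positivity of $\lambda_R$: any decay of $c$ faster than $|\eta|^{-2}$ would kill the Rayleigh-quotient bound, so the specific shape of $\bar{h}_\epsilon$ is not cosmetic. The non-self-adjoint transport is not fought but exploited: for an even concave test function, the \emph{bad-sign} drift $-\mu_1 \beta H \partial_\eta$ acquires a good sign pointwise, reducing the spectral question to the self-adjoint $\mathcal{L}_0$. Subcriticality enters solely through the size of $S_\epsilon$, providing a polynomially long rescaled time window on which $\tilde{g} \equiv 1$; exponential growth on this window becomes super-polynomial blowup in original variables, dwarfing the $C/\epsilon$ target and definitively ruling out any uniform $L_t^q L_x^\infty$ bound.
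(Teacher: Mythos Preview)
Your argument is correct and takes a genuinely different route from the paper's. Both proofs start with the same parabolic rescaling $(t,\xi)\mapsto(\epsilon s,\sqrt{\epsilon}\,\eta)$, reducing to an $\epsilon$-free PDE and converting the supercritical assumption $p<2/(1+\beta)$ into a diverging budget $\|\tilde g\|_{L^p}^p\le\epsilon^{-\gamma}$. From there the paper and you diverge. The paper chooses $\tilde g(s)\sim e^{-s}$ with amplitude $\epsilon^{(\beta+1)/2-1/p}$, drops the drift term by a pointwise comparison (since for solutions satisfying (S) the drift has the amplifying sign), and then iterates Duhamel $N$ times to peel off a factor $\epsilon^{N((\beta+1)/2-1/p)}$; picking $N$ large enough gives $\int_0^T\|\cv_\epsilon\|_{L^\infty}\gtrsim\epsilon^{-1}$ by entirely elementary means. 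You instead take $\tilde g\equiv1$ on a polynomially long window $[1,S_\epsilon]$ and run a spectral argument: the Rayleigh quotient with a cosine test function shows the self-adjoint part $4\partial_\eta^2+c$ has positive principal Dirichlet eigenvalue on $[-R,R]$ because $c(\eta)\sim|\eta|^{\beta-1}$ decays slower than $|\eta|^{-2}$; the same sign observation the paper uses (here applied to the monotone principal eigenfunction $\phi_0$) shows the drift can only raise $\lambda_R$. The parabolic comparison then yields exponential growth $e^{\lambda_R S_\epsilon}$, a bound strictly stronger than the paper's polynomial one. What the paper's approach buys is simplicity and a clean match with the stated range $T\ge\epsilon$; your approach buys a sharper lower bound and a conceptually transparent reason for blowup (positive principal eigenvalue), at the cost of invoking the BNV characterization and the monotonicity of the Schr\"odinger ground state.

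One minor discrepancy: as written, your bound kicks in only for $T\ge\epsilon S_\epsilon=\epsilon^{1-\gamma/2}$, which is larger than the $T\ge\epsilon$ in the statement. This is irrelevant for the ``in particular'' conclusion (no uniform $L^q_tL^\infty_x$ bound), but if you want the exact inequality for all $T\ge\epsilon$ you should note that you could bypass $\phi_R$ and use $w(s,\eta)=c_0e^{\lambda_R^0 s}\phi_0(\eta)$ directly as a parabolic subsolution from $s=0$ (since $\mathcal L\phi_0\ge\lambda_R^0\phi_0$); this still needs $T/\epsilon$ to reach order $\log(1/\epsilon)/\lambda_R^0$, so a small residual gap remains. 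The paper's Duhamel iteration avoids this because the iterates $G_j$ are already positive on $[0,1]$.
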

\begin{proof}
It is clear that $\bar{h}_{\epsilon}\in C^1(\rone)$ converges to the odd extension of $\xi^\beta$, and that it is concave on $(0,\infty)$. We ask the reader to readily verify that if we define
\[
g_\epsilon(t):=\frac{1}{\epsilon^p}e^{-t/\epsilon},\quad H(y):=\int_0^{y}\frac{\beta}{1+|\eta|^{1-\beta}}d\eta,\quad y\in\rone,
\]
and if $\cw_\epsilon$ solves 
\begin{equation}\label{eqweps}
	\partial_s\cw_\epsilon(s,y)-4\partial^2_y\cw_\epsilon(s,y)+\mu_1\epsilon^{\left(\frac{\beta+1}{2}-\frac{1}{p}\right)}g(s)H(y)\partial_y\cw_\epsilon(s,y)=\epsilon^{\left(\frac{\beta+1}{2}-\frac{1}{p}\right)}g(s)H'(y)\cw_\epsilon(s,y),
\end{equation}
with $\cw_{\epsilon}(0,y)=e^{-y^2}$, then $\cv_\epsilon(t,\xi):=\cw_\epsilon(t/\epsilon,\xi/\sqrt{\epsilon})$ solves \eqref{eqvps2}, so that our task reduces to bounding $\cw_\epsilon$ from below. Notice that solutions to \eqref{eqweps} satisfy the symmetry condition (S) from Theorem \ref{secondthm}: this follows just from the classical maximum/minimum principle and concavity of $H$ on $[0,\infty)$. It follows that, once again using the comparison principle, we only need to obtain a lower bound on solutions to
\[
\partial_s v_\epsilon(s,y)-4\partial^2_yv_\epsilon(s,y)=\epsilon^{\left(\frac{\beta+1}{2}-\frac{1}{p}\right)}g(s)H'(y)v_\epsilon(s,y),\quad v_{\epsilon}(0,y)=e^{-y^2}.
\]
Indeed: we have $\partial_y\cw_{\epsilon}\leq0$ on $[0,\infty)$, and by symmetry, it must be non-negative on $(-\infty,0]$. As $H$ is positive on $(0,\infty)$ (and negative on $(-\infty,0)$ by symmetry), the drift term is destabilizing: $-H(y)\partial_y\cw\geq0$ always. Thus, $\cw_\epsilon$ is bounded from below by $v_\epsilon$ (pointwise). By Duhamel's principle, we have 
\begin{equation}\label{duhrep}
	v_\epsilon(s,y)=\intsol\Psi(s,y-z)e^{-z^2}dz+\epsilon^{\left(\frac{\beta+1}{2}-\frac{1}{p}\right)}\int_0^sg(r)\intsol\Psi(s-r,y-z) H'(z)v_\epsilon(r,z)dzdr.
\end{equation}
As $v_\epsilon\geq0$, we have $v_\epsilon(s,y)\geq G_0(s,y)$, where 
\[
G_0(s,y):=\intsol\Psi(s,y-z)e^{-z^2}dz,
\] 
leading to 
\[
v_\epsilon(s,y)\geq G_0(s,y)+\epsilon^{\left(\frac{\beta+1}{2}-\frac{1}{p}\right)}\int_0^sg(r)\intsol\Psi(s-r,y-z) H'(z) G_0(r,z)dzdr.
\]
If we define 
\[
G_1(s,y):=\int_0^sg(r)\intsol\Psi(s-r,y-z) H'(z)G_0(r,z)dzdr,
\]
then $v_\epsilon(s,y)\geq G_0(s,y)+\epsilon^{\left(\frac{\beta+1}{2}-\frac{1}{p}\right)} G_1(s,y)$. Plugging this again into the Duhamel representation \eqref{duhrep}:
\[
v_\epsilon(s,y)\geq G_0(s,y)+\epsilon^{\left(\frac{\beta+1}{2}-\frac{1}{p}\right)}G_1(s,y)+\epsilon^{2\left(\frac{\beta+1}{2}-\frac{1}{p}\right)}\int_0^sg(r)\intsol\Psi(s-r,y-z) H'(z)G_1(r,z)dzdr,
\]
and so defining 
\[
G_2(s,y):=\int_0^sg(r)\intsol\Psi(s-r,y-z) H'(z)G_1(r,z)dzdr,
\]
we get $v_\epsilon(s,y)\geq G_0(s,y)+\epsilon^{\left(\frac{\beta+1}{2}-\frac{1}{p}\right)}G_1(s,y)+\epsilon^{2\left(\frac{\beta+1}{2}-\frac{1}{p}\right)} G_2(s,y)$. Defining $G_j$ iteratively, 
\[
G_j(s,y):=\int_0^sg(r)\intsol\Psi(s-r,y-z) H'(z)G_{j-1}(r,z)dzdr,\quad G_0(s,y):=\intsol\Psi(s,y-z)e^{-z^2}dz,
\]
we see that a straightforward inductive argument yields 
\[
v_\epsilon(s,y)\geq \sum_{j=0}^N\epsilon^{j\left(\frac{\beta+1}{2}-\frac{1}{p}\right)}G_j(s,y).
\]
The conditions $p\in(0,2/(\beta+1))$ and $\beta\in(0,1)$ guarantees the existence of a sufficiently large $N\in\mathbb{N}$ such that 
\[
N\left(\frac{\beta+1}{2}-\frac{1}{p}\right)\leq-3.
\]
The claim follows immediately by noting that $\|\cv_\epsilon(t,\cdot)\|_{L^{\infty}}=\cv_\epsilon(t,0)\geq v_\epsilon(t/\epsilon,0)$ and so
\[
\int_0^T\|\cv_\epsilon(t,\cdot)\|_{L^{\infty}}dt\geq\int_0^Tv_\epsilon\left(\frac{t}{\epsilon},0\right)dt=\epsilon\int_0^{T/\epsilon}v_\epsilon(s,0)ds\geq C\epsilon^{-1}.
\]
\end{proof}
An interesting consequence of this counter example is that one can use it to prove that solutions to the non-local problem
\begin{align}
&\partial_t\Omega-4\partial^2_\xi\Omega\geq g(t)\int_0^\xi\bar{h}_{\epsilon}'(\eta)\partial_\eta\Omega(t,\eta)d\eta,\quad &&(t,\xi)\in(0,\infty)\times(0,\infty),\label{pdeineqmin}\\
&\Omega(0,\xi)\geq0,\quad &&\xi\geq0\label{pdeidmin},\\
&\Omega(t,0)\geq0,\quad &&t\geq0,\label{pdebcmin}
\end{align}
in general do not satisfy a minimum principle. We start by showing that given any $k_1\geq0$ and $k_2>0$, one can always construct a solution to 
\begin{align}
&\partial_t\Omega-4\partial^2_\xi\Omega= g(t)\int_0^\xi\bar{h}_{\epsilon}'(\eta)\partial_\eta\Omega(t,\eta)d\eta+k_1,\quad &&(t,\xi)\in(0,\infty)\times(0,\infty),\label{pdeineqmin2}\\
&\Omega(0,\xi)\geq0,\quad &&\xi\geq0\label{pdeidmin2},\\
&\Omega(t,0)=tk_1+k_2,\quad &&t\geq0,\label{pdebcmin2}
\end{align}
that becomes negative for some $(t,\xi)\in(0,\infty)\times(0,\infty)$. Let $\cv_\epsilon$ be the sequence of solutions constructed in the proof of Proposition \ref{propce} corresponding to the case for $p=1$, $\beta=1/2$, and $\mu_1=0$:
\[
\partial_t\cv_\epsilon(t,\xi)-4\partial^2_\xi\cv_\epsilon(t,\xi)=g_\epsilon(t)\bar{h}_{\epsilon}'(\xi)\cv_\epsilon(t,\xi),\quad \cv_\epsilon(0,\xi)=e^{-\left(\frac{\xi}{\sqrt{\epsilon}}\right)^2},
\]
and recall the existence of a sequence of non-negative functions $\{G_j\}$, independent of $\epsilon$, such that  
\begin{equation}\label{lwrbd}
	\cv_\epsilon(t,\xi)\geq \sum_{j=0}^N\epsilon^{-\frac{j}{4}}G_j(t/\epsilon,\xi/\sqrt{\epsilon}).
\end{equation}
Thus, if we define 
\begin{equation}\label{negsolapp}
	\Omega(t,\xi):=tk_1+k_2-\int_0^\xi\cv_\epsilon(t,\eta)d\eta,\quad (t,\xi)\in[0,\infty)\times(0,\infty),
\end{equation}
then provided $\epsilon$ is small enough depending on $k_2$, we must have $\Omega(0,\xi)\geq0$:
\begin{equation}\label{icce}
\Omega(0,\xi)=k_2-\sqrt{\epsilon}\int_0^{\xi/\sqrt{\epsilon}}e^{-\eta^2}d\eta.
\end{equation}
Since $\cv_\epsilon$ is even in the spatial variable and constants solve the non-local PDE with homogenous forcing, $\Omega$ solves \eqref{pdeineqmin2}-\eqref{pdebcmin2}. It is clear from the lower bound on $\cv_\epsilon$ \eqref{lwrbd} that $\Omega$ becomes negative for some $(t,\xi)\in(0,\infty)\times(0,\infty)$. In fact, due to the lower bound, one can make $\Omega$ smaller than any given finite negative number by choosing $\epsilon$ small enough. This gives us a counter-example to the minimum principle for positive Dirichlet data. We now prove the falsehood of the minimum principle even with homogenous Dirichlet data. The main idea is to shift the previous example by a small amount $\delta$, introduce a jump discontinuity in the initial data, and argue by contraduction. To that end, suppose it is true, that is, suppose that any solution $\Omega$ to \eqref{pdeineqmin}-\eqref{pdebcmin} is non-negative. Let
\[
u_0(\xi):=
\begin{cases}
	\xi,&\xi\in[0,\delta],\\
	\Omega(0,\xi-\delta),&\xi>\delta,
\end{cases}
\]
where $\Omega(0,\cdot)$ is as defined in \eqref{icce} with $k_2=80$. Let $u$ solve 
\[
\partial_tu-4\partial^2_\xi u=g_\epsilon(t)\bar{h}_{\epsilon}'(\xi)u(t,\xi)-g_\epsilon(t)\int_0^\xi\bar{h}_{\epsilon}''(\eta) u(t,\eta)d\eta,\quad (t,\xi)\in(0,\infty)\times(0,\infty)
\]
with data $u(0,\xi)=u_0(\xi)$, $u(t,0)=0$. One can construct a solution by the standard method of reflection coupled with Duhamel's principle, converting the problem to a Volterra-integral equation of the second kind that can be solved by Neumann series for instance; we leave the details to the interested reader. Courtesy of classical parabolic regularity, the jump discontinuity in the initial data is smoothened out immediately, so that an integration by parts reveals
\[
\partial_tu-4\partial^2_\xi u=g_\epsilon(t)\int_0^\xi\bar{h}_{\epsilon}'(\eta) \partial_\eta u(t,\eta)d\eta,\quad (t,\xi)\in(0,\infty)\times(0,\infty),
\]
whence $u\geq0$ always as we assumed the validity of the minimum principle. Now, let $\Omega$ be \eqref{negsolapp} with $k_1=k_2=80$, and notice that for $\xi\geq\delta$, if we define
\[
F_\delta(t,\xi):= g_\epsilon(t)\int_0^\delta\bar{h}_{\epsilon}'(\eta)\partial_\eta u(t,\eta)d\eta+g_\epsilon(t)\int_0^{\xi-\delta}[\bar{h}_{\epsilon}'(\eta+\delta)-\bar{h}_{\epsilon}'(\eta)]\partial_\eta \Omega(t,\eta)d\eta
\] 
and set $w(t,\xi):=\Omega(t,\xi-\delta)-u(t,\xi)$ then $w$ solves 
\begin{align*}
&\partial_tw-4\partial^2_\xi w=g_\epsilon(t)\int_{\delta}^{\xi}\bar{h}_{\epsilon}'(\eta)\partial_\eta w(t,\eta)d\eta+80-F_\delta(t,\xi),\quad &&(t,\xi)\in(0,\infty)\times(\delta,\infty),\\
&w(0,\xi)\geq0,\quad &&\xi\geq\delta,\\
&w(t,\delta)\geq80-u(t,\delta),\quad &&t\geq0,
\end{align*}
Observe that $u(t,\delta)$ converges to the average of $u_0(\delta^+)$ and $u_0(\delta^-)$ as $t\rightarrow0^+$, while $\|\partial_\xi u(t,\cdot)\|_{L^{\infty}}$ is bounded uniformly in $t$ provided $t\gtrsim 1$ (it will be of order $\exp(\epsilon^{-1/4})$, so the bound is uniform in $\delta$ as well). Thus, if $\delta$ is small enough depending on $\epsilon$, we must have $w(t,\delta)\geq0$ always. Furthermore, a straightforward calculation reveals that $\|\partial_\xi u\|_{L_t^{\infty}L_x^{1}}+\|\partial_\xi \Omega\|_{L_t^{\infty}L_x^{1}}$ is controlled by $\exp(\epsilon^{-1/4})$, which makes $80-F_{\delta}(t,\xi)\geq 30$ if $\delta$ is small enough. That last statement follows from an integration by parts in the first integral in $F_\delta$, followed by the bound $\|u(t,\cdot)\|_{L^{\infty}}\leq\|\partial_\xi u(t,\cdot)\|_{L^1}$, together with using the inequality $|\bar{h}_{\epsilon}'(\eta+\delta)-\bar{h}_{\epsilon}'(\eta)|\leq\epsilon^{-1/2}\delta^{1/2}$ to handle the second integral. We now encourage the reader to readily verify that an implication of assuming the validity of the minimum principle is non-negativity of solutions to
\begin{align*}
&\partial_tw-4\partial^2_\xi w\geq g(t)\int_\delta^\xi\bar{h}_{\epsilon}'(\eta)\partial_\eta w(t,\eta)d\eta+30,\quad &&(t,\xi)\in(0,\infty)\times(\delta,\infty),\\
&w(0,\xi)\geq0,\quad &&\xi\geq\delta,\\
&w(t,\delta)\geq0,\quad &&t\geq0,
\end{align*}
provided $\|\partial_\xi w\|_{L_t^{\infty}L_x^{1}}$ is bounded independent of $\delta$ and the latter is chosen small enough. It is therefore clear that by choosing $\delta$ small enough depending on $\epsilon$, the assumption that the minimum principle holds implies that $w\geq0$, i.e. $\Omega(t,\xi-\delta)\geq u(t,\xi)$ whenever $\xi\geq\delta$. But this clearly means that $u(t,\xi)<0$ for some positive $t$ and $\xi$ since $\Omega$ is; absurd.
\section{A special case of Ladyzhenskaja-Prodi-Serrin}\label{applps}
Here, we give a simple proof of the fact that $L_t^qL_x^{\infty}$ solutions to the incompressible NSE \eqref{NSE} are regular provided $q>2$, for the sake of completeness. We start by proving a singular version of Gronwall's inequality. Such an inequality is more than likely to be proved somewhere (it is very hard to track all the different versions of Gronwall's inequality), nevertheless, we provide a proof here for the sake of convenience. Bound \eqref{gineqest}, below, is less than likely to be optimal, but it will do for our purposes.
\begin{lem}[Singular Gronwall]\label{singgron}
Let $f,h,g:[0,T]\rightarrow[0,\infty)$ be non-negative smooth functions such that 
\begin{equation}\label{gineqasum}
	f(t)\leq h(t)+\int_0^t(t-s)^{-\alpha}g(s)f(s)ds,\quad \forall t\in[0,T],
\end{equation}
for some $\alpha\in[0,1)$. It follows that for any $q\in(1/(1-\alpha),\infty)$, there exists a positive constant $C=C(q,\alpha,T)>0$ such that 
\begin{equation}\label{gineqest}
f(t)\leq h(t)+Ce^{C\|g\|^q_{L^q}}\left(\int_0^th^q(s)g^q(s)ds\right)^{1/q}, \quad \forall t\in(0,T].
\end{equation}
\end{lem}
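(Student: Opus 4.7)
The plan is to reduce the singular integral inequality to a non-singular one by a single application of H\"older's inequality, thereby exploiting the assumption $q(1-\alpha) > 1$, and then to solve the resulting (non-singular) linear Gronwall-type inequality at the level of the primitive, rather than iterating Gronwall directly, so that the coefficient of $h(t)$ on the right-hand side remains exactly $1$.

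First I would fix $q \in (1/(1-\alpha), \infty)$ and its conjugate exponent $q' = q/(q-1)$. The condition $q > 1/(1-\alpha)$ is precisely equivalent to $\alpha q' < 1$, so that $s \mapsto (t-s)^{-\alpha}$ lies in $L^{q'}(0,t)$ with norm $\bigl(\int_0^t (t-s)^{-\alpha q'} ds\bigr)^{1/q'} = \bigl(t^{1-\alpha q'}/(1-\alpha q')\bigr)^{1/q'} \leq C_1$, where $C_1 = C_1(q,\alpha,T)$. Applying H\"older's inequality to the integral on the right-hand side of \eqref{gineqasum} then yields the non-singular inequality
\[
f(t) \leq h(t) + C_1 \left(\int_0^t g^q(s) f^q(s) ds\right)^{1/q}, \quad \forall t \in [0,T].
\]

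Next I would introduce the primitive $\Phi(t) := \int_0^t g^q(s) f^q(s) ds$, which is smooth and non-decreasing. Raising the previous inequality to the $q$-th power and using $(a+b)^q \leq 2^{q-1}(a^q + b^q)$ gives the pointwise bound
\[
f^q(t) \leq 2^{q-1} h^q(t) + 2^{q-1} C_1^q \, \Phi(t).
\]
Multiplying by $g^q(t)$ and recognizing the left-hand side as $\Phi'(t)$ produces the linear differential inequality
\[
\Phi'(t) \leq 2^{q-1} h^q(t) g^q(t) + 2^{q-1} C_1^q g^q(t) \Phi(t),
\]
with $\Phi(0) = 0$. This is an honest first-order linear ODE-type inequality and can be integrated explicitly via the integrating factor $\exp\bigl(-2^{q-1} C_1^q \int_0^t g^q\bigr)$, yielding
\[
\Phi(t) \leq 2^{q-1} \exp\!\left(2^{q-1} C_1^q \|g\|_{L^q}^q\right) \int_0^t h^q(s) g^q(s) ds.
\]

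Finally I would plug this bound on $\Phi$ back into the intermediate inequality $f(t) \leq h(t) + C_1 \Phi(t)^{1/q}$, which is the key step that keeps the coefficient of $h(t)$ equal to one: the exponential factor and the constant $C_1 \cdot 2^{(q-1)/q}$ only multiply the integral term. Choosing $C$ to be the maximum of $C_1 \cdot 2^{(q-1)/q}$ and $2^{q-1} C_1^q$ (so that $C$ absorbs both the prefactor and the exponent), one obtains \eqref{gineqest}. The only genuine input is the restriction $q(1-\alpha) > 1$ used in the H\"older step; there is no real obstacle beyond this, since after reducing to a non-singular inequality the passage via $\Phi$ is a standard integrating-factor argument.
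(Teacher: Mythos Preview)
Your proof is correct and follows essentially the same approach as the paper's own proof: apply H\"older's inequality (using $\alpha q'<1$) to remove the singular kernel, set $\Phi(t)=\int_0^t g^q f^q\,ds$, derive a linear differential inequality for $\Phi$, integrate it, and substitute back. The only cosmetic differences are your sharper constant $2^{q-1}$ in place of the paper's $2^q$ and your explicit integrating-factor language where the paper simply invokes the standard Gronwall inequality.
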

\begin{proof}
	As $q>1/(1-\alpha)$, its H\"older conjugate $p$ satisfies $p\alpha<1$, so that by H\"older's inequality, there must exist a positive constant $C=C(q,\alpha,T)$ for which
	\[
	f(t)\leq h(t)+C(\eta(t))^{1/q}, \quad \eta(t):=\int_0^tg^q(s)f^q(s)ds,\quad \forall t\in[0,T].
	\]
	It follows that 
	\[
	\eta'(t)\leq 2^qh^q(t)g^q(t)+2^qC^qg^q(t)\eta(t),
	\]
	so that by the standard Gronwall inequality,
	\[
	\eta(t)\leq e^{C\|g\|_{L^q}^q}2^q\int_0^th^q(s)g^q(s)ds,
	\]
	from which the result follows. One can (potentially) obtain a sharper bound by plugging \eqref{gineqasum} into itself $n$ times (to absorb the singular kernel) before applying standard Gronwall inequality. This would still require $q>1/(1-\alpha)$, but the resulting bound could poossibly be sharper than the claimed one.
\end{proof}

Let $u_0$ be a given, smooth, divergence free vector field, and let $(u,p)$ be a smooth solution to \eqref{NSE}. Our aim is to show that if there exists a $q\in(2,\infty)$ and $M>0$ such that 
\begin{equation}\label{ac1}
	\int_0^{T_*}\|u(t,\cdot)\|^q_{L^{\infty}}dt\leq M,
\end{equation}
then there exists an $M_1$ such that
\[
\sup_{t\in[0,T_*)}\|\nabla u(t,\cdot)\|_{L^{\infty}}\leq M_1,
\]
from which regularity easily follows. To that end, we first obtain point-wise estimates on $\nabla p$ via the representation \eqref{reppress}
\[
\nabla p(t,x)=\int_{\rd} \left[u_i(t,x-z)-u_i(t,x)\right][u_j(t,x-z)-u_j(t,x)]\partial_i\partial_j\nabla\phi(z)dz,
\]
with $\phi(z):=C_d|z|^{2-d}$ being the fundamental solution to the Laplace equation. Notice that the kernel decays sufficiently rapidly at infinity, so that one can make sense of the integral even for periodic $u$. This was done rigorously in our previous work \cite[Lemma 4.1]{Ibdah2022}. From the above representation, it is straightforward to verify that for any $\rho>0$, we have
\[
|\nabla p(t,x)|\leq C_d\left[\|\nabla u(t,\cdot)\|_{L^{\infty}}^2\rho+\rho^{-1}\|u(t,\cdot)\|_{L^{\infty}}^2\right].
\]
Optimizing in $\rho$ and taking a supremum in $x$ on the left-hand side yields the bound 
\begin{equation}\label{ac2}
	\|\nabla p(t,\cdot)\|_{L^{\infty}}\leq C_d\|\nabla u(t,\cdot)\|_{L^{\infty}}\|u(t,\cdot)\|_{L^{\infty}}.
\end{equation}
An application of Duhamel's principle tells us:
\[
u(t,x)=\int_{\rd}\Psi(t,y)u_0(x-y)dy-\int_0^t\int_{\rd}\Psi(t-s,x-y)u(s,y)\cdot\nabla u(s,y)dyds-\int_0^t\int_{\rd}\Psi(t-s,x-y)\nabla p(s,y)dyds,
\]
where $\Psi$ is the heat kernel. Again, the above representation for the solution $u$ makes sense for both the periodic and whole space setting. Taking a derivative and bounding the nonlinearity by $\|u(s,\cdot)\|_{L^{\infty}}\|\nabla u(s,\cdot)\|_{L^{\infty}}$ and the pressure term by the same (owing to \eqref{ac2}) we get 
\[
|\nabla u(t,x)|\leq \int_{\rd}\Psi(t,y)|\nabla u_0(x-y)|dy+C_d\int_0^t\|u(s,\cdot)\|_{L^{\infty}}\|\nabla u(s,\cdot)\|_{L^{\infty}}\int_{\rd}|\nabla \Psi(t-s,x-y)|dyds.
\]
A simple calculation reveals that $\|\nabla \Psi(t-s,\cdot)\|_{L^1}\leq \nu^{-1/2}(t-s)^{-1/2}$, and so taking a supremum in $x$ on the left-hand side we arrive at 
\[
\|\nabla u(t,\cdot)\|_{L^{\infty}}\leq \|\nabla u_0\|_{L^{\infty}}+C_{d,\nu}\int_0^t(t-s)^{-1/2}\|u(s,\cdot)\|_{L^{\infty}}\|\nabla u(s,\cdot)\|_{L^{\infty}}ds.
\]
The claim follows by the singular Gronwall inequality, Lemma \ref{singgron} above, with $\alpha=1/2$, via utilizing \eqref{ac1}.
\end{appendices}
\bibliographystyle{abbrv}
\bibliography{mybib}
\end{document}